\documentclass[a4paper,reqno, 12pt]{amsart}
\usepackage[left=2.8cm,right=2.8cm,top=2.86cm,bottom=2.86cm]{geometry}
\usepackage{amsmath,amssymb,latexsym,esint,cite,verbatim,wasysym,mathrsfs}
\usepackage{microtype}
\usepackage{color,enumitem,graphicx}
\usepackage[colorlinks=true,urlcolor=blue, citecolor=red,linkcolor=blue,
linktocpage,pdfpagelabels,bookmarksnumbered,bookmarksopen]{hyperref}
\usepackage[hyperpageref]{backref}
\usepackage[english]{babel}
\renewcommand{\epsilon}{\varepsilon}

\newcommand{\pnorm}[2][]{\if #1'' \left|#2\right|_p \else \left|#2\right|_{#1} \fi}
\hypersetup{linkcolor=blue, colorlinks=true ,citecolor = red}

\DeclareMathOperator*{\esssup}{ess\,sup}
\DeclareMathOperator*{\loc}{loc}
\newcommand*\diff{\mathop{}\!\mathrm{d}}

\begin{document}
	
	\title[Fractional $p$-Laplacian problems with singular weights]{Properties of eigenvalues and some regularities on fractional $p$-Laplacian with singular weights}
	
	\author[K.\ Ho and I.\ Sim]{Ky Ho and Inbo Sim}
	\address[K.\ Ho]{Institute of Fundamental and Applied Sciences,\newline Duy Tan University, Ho Chi Minh City 700000, Vietnam}
	\email{hnky81@gmail.com}
	
	\address[I.\ Sim]{
		Department of Mathematics \newline
		University of Ulsan, Ulsan 44610, Republic of Korea}
	\email{ibsim@ulsan.ac.kr}
	
	\subjclass[2000]{35P15, 35P30, 35R11}
	\keywords{Fractional $p$-Laplacian, bifurcation, eigenvalue problem, a-priori bounds}
	
	\begin{abstract}
		We provide fundamental properties of the first eigenpair for  fractional $p$-Laplacian eigenvalue problems under singular weights, which is related to Hardy type inequality, and also show that the second eigenvalue is well-defined. 
		We obtain  a-priori bounds and the continuity of solutions to problems with such singular weights with some additional assumptions. Moreover, applying the above results, we show a global bifurcation emenating from the first eigenvalue, the Fredholm alternative for non-resonant problems, and obtain the existence of infinitely many solutions for some nonlinear problems involving singular weights. These are new results, even for (fractional) Laplacian.
	\end{abstract}
	
	\maketitle
	
	\numberwithin{equation}{section}
	\newtheorem{theorem}{Theorem}[section]
	\newtheorem{lemma}[theorem]{Lemma}
	\newtheorem{proposition}[theorem]{Proposition}
	\newtheorem{corollary}[theorem]{Corollary}
	\newtheorem{definition}[theorem]{Definition}
	\newtheorem{example}[theorem]{Example}
	\newtheorem{remark}[theorem]{Remark}
	\allowdisplaybreaks
	
	\newcommand{\A}{{\mathscr A}}
	\newcommand{\B}{{\mathscr B}}
	\newcommand{\C}{{\mathscr C}}
	\newcommand{\W}{{\mathscr W}}
	
	\section{Introduction}
	We cannot emphasize enough the study of eigenvalue problems since it is not only interesting in and of itself, but can also be applied to a many of associated nonlinear problems. Let us list some results comparing the singularities of weight functions.  Cuesta \cite{Cu} studied eigenvalues for the $p$-Laplacian
	\begin{eqnarray}\label{p-eq}
	\begin{cases}
	-\Delta_pu=\lambda V(x)|u|^{p-2}u \quad &\text{in } \Omega,\\
	u=0\quad &\text{on } \partial \Omega,
	\end{cases}
	\end{eqnarray}
	where $p>1,  \Delta_pu := \mbox{div} (|\nabla u|^{p-2} \nabla u),  \Omega$ is a bounded domain in $\mathbb{R}^N$ with $N \ge 2,  \lambda$ is a spectral parameter, 
$V_+ := \max \{V,0\} \not \equiv 0,$ and
	\begin{equation}\label{Holderclass}
	V \in L^s(\Omega) ~\mbox{for some}~ s > \frac{N}{p}~\mbox{if} ~1<p<N,
	\end{equation}
	which is suitable for applying the H\"older inequality in $W^{1,p}_0(\Omega)$ to the right-hand side of \eqref{p-eq}. She showed that 
	\begin{equation}
	\lambda_1:=\inf\left\{\int_{\Omega}|\nabla u|^p\diff x: u\in W_0^{1,p}(\Omega), \int_{\Omega}V(x)|u|^p\diff x=1 \right\}
	\end{equation}
	is the least positive eigenvalue (also called the first eigenvalue) and is achieved at a positive eigenfunction $e_1$. Furthermore, she proved the standard properties (isolation and simplicity) of the first eigenvalue and characterization of the second eigenvalue.
	
	Concerning more involved singular weights than condition \eqref{Holderclass}, Szulkin-Willem \cite{SW} assumed 
	\begin{eqnarray}\label{szulkinclass}
	\begin{cases}
	V \in L^1_{\rm{loc}}(\Omega), ~V_+ =V_1 + V_2 \not \equiv 0,~V_1 \in L^{\frac{N}{p}}(\Omega),\\
	\underset{\underset{x \in \Omega}{x \to y}}{\lim}|x-y|^p V_2(x) =0, \forall y \in \overline{\Omega},
	\end{cases}
	\end{eqnarray}
	and obtained that $\lambda_1 >0$ is achieved with $e_1 \ge 0$ and $\lambda_1$ is simple under more assumptions. 
	
	Lucia-Ramaswamy \cite{lucia} considered $p=2$ and used the Lorentz space $L^{p_0,q_0}(\Omega)$ (see Appendix for a brief definition of $L^{p_0,q_0}(\Omega)$) and assumed
	\begin{equation}\label{luciaclass}
	V \in L^{\frac{N}{2},q_0}(\Omega) ~\mbox{for some }~q_0 \in (1,\infty) ~\mbox{such that}~V_+ \not \equiv 0,
	\end{equation}
	which is independent of conditions \eqref{szulkinclass} and obtained the existence of the first eigenpair $(\lambda_1,e_1)$ as well as the simplicity of $\lambda_1.$ They also derived a Rabinowitz global bifurcation from $\lambda_1$ (see Section 5 for the definition of Rabinowitz global bifurcation).
	
	Perera-Sim \cite{PS} introduced a class $\B_q,$ for $q \in [1,p^\ast)$ where $p^\ast :=\frac{Np}{N-p}$ if $p<N$ and $p^\ast:=\infty$ if $p\geq N,$ the class of measurable functions $K$ such that for the distance function $\rho$ (see \eqref{dist}), $K \rho^a \in L^r(\Omega)$ for some $a \in [0,q - 1]$ and $r \in (1,\infty)$ satisfying
	$\frac{1}{r} + \frac{a}{p} + \frac{q-a}{p^\ast} < 1,$ which is independent of condition \eqref{luciaclass} (see Example~\ref{Wq.not.Lorentz}), and obtained $\lambda_1 >0$ is achieved.
	
	In the last decade problems involving the nonlocal nonlinear operator $(-\Delta)_p^s,$ where $s\in (0,1)$ and 
	\begin{equation*}
		(- \Delta)_p^s\, u(x) := 2\, \lim_{\varepsilon \searrow 0} \int_{\left\{x\in\mathbb{R}^N: \ |x|>\varepsilon\right\} }\frac{|u(x) - u(y)|^{p-2}\, (u(x) - u(y))}{|x - y|^{N+sp}}\, \diff y, \quad x \in \mathbb{R}^N.
	\end{equation*} 
has been the center of PDEs since such problems arised in various fields \cite{A,C}. 
The fractional $p$-Laplace eigenvalue problems have mostly been studied under at most an  $L^\infty$-weight 
 (see \cite{BP,DQ,IS,Franzina,L-L}). Recently, Ho-Perera-Sim-Squassina \cite{HPSS} studied the eigenvalues of the analog fractional $p$-Laplacian problem: 
\begin{eqnarray}\label{E-P}
	\begin{cases}
		(-\Delta)_p^su=\lambda h(x)|u|^{p-2}u \quad &\text{in } \Omega,\\
		u=0\quad &\text{in } \mathbb{R}^N\setminus \Omega.
	\end{cases}
\end{eqnarray}
They introduced the analog  $\C_q$  (the class of measurable functions $h$ such that $h\rho^{sa}\in L^r(\Omega)$ for some $a\in [0,q-1]$ and $r\in (1,\infty)$ satisfying
	$\frac{1}{r}+\frac{a}{p}+\frac{q-a}{p_s^\ast}<1$, where $p_s^\ast :=\frac{Np}{N-sp}$ if $ps<N$ and $p^\ast:=\infty$ if $p\geq N$) to $\B_q$ in the $p$-Laplacian eigenvalue problem and obtained the existence of the first eigenpair $(\lambda_1,e_1)$. They also obtained the simplicity of $\lambda_1$ and the positivity of $e_1$ if $h \ge 0$.
	
	In this paper, we first focus on properties of the first eigenpair and the well-definedness of the second eigenvalue of problem \eqref{E-P}. 
		 One of the novelties is to relax $a\in [0,q-1]$ to $a\in [0,q)$ (see Definition \ref{classB}). It is worth mentioning that we can obtain the simplicity of $\lambda_1$ and the positivity of $e_1$ with the help of a strong maximum principle (see Theorem \ref{minimum principle}) without assuming $h \ge 0$. The other novelty of this paper is to show that the second eigenvalue is well-defined (thus the first eigenvalue is isolated and so Rabinowitz global bifurcation theory is applicable). The well-definedness of the second eigenvalue of problem \eqref{E-P} when $h\equiv 1$ was studied in \cite{BP}.
	
	The other goal of the paper is to obtain some regularities for the following problem (so all eigenfunctions have the same regularity):
	\begin{eqnarray}\label{existence.sol}
	\begin{cases}
	(-\Delta)_p^su=f(x,u) \quad &\text{in } \Omega,\\
	u=0\quad &\text{in } \mathbb{R}^N\setminus \Omega,
	\end{cases}
	\end{eqnarray}
	where $f$ satisfies the $\widetilde{\W_p}$-Carath\'eodory condition (see Definition \ref{classB} and condition $(\textup{F})$ in Section~\ref{Sec.Pre}). Under the $\widetilde{\W_p}$-Carath\'eodory condition, we obtain a-priori bounds for solutions of \eqref{existence.sol} by the De Giorgi iteration argument, which was used in \cite{Ho-Sim}. Moreover under an additional condition, we can obtain the continuity of solutions. 
	
	In the remaining sections, we shall make use of the above results to show Rabinowitz global bifurcation  from the first eigenvalue, as well as some existence results.  Precisely, applying the isolation and simplicity of the first eigenvalue of \eqref{E-P} and using the Index argument in \cite{DKN}, we show Rabinowitz's global bifurcation emenating from the first eigenvalue for nonlinear problems of  $p$-superlinear at zero type. The result for the second eigenvalue of problem \eqref{E-P} implies the Fredholm alternative for non-resonant problems. Finally, employing a-priori bounds of solutions for \eqref{existence.sol} with the arguments in \cite{Wang}, we obtain the existence of infinitely many solutions for some nonlinear problems of  $p$-sublinear at zero type. We have to emphasize that for simplicity and clarity of presentation, we only treat fractional $p$-Laplacian problems but our results remain valid for corresponding $p$-Laplacian problems.
	
	The paper is organized as follows. In Section~\ref{Sec.Pre}, we provide a suitable functional framework for problems \eqref{E-P} and \eqref{existence.sol}, give the definition of a class of singular weights, and prove some preliminary results. 
	In Section \ref{EPs}, we consider related eigenvalue problems. In Section~\ref{regularity}, we obtain a-priori bounds for solutions of \eqref{E-P} with a more general nonlinear term. Sections~\ref{Bifur.} and \ref{existence} are devoted to investigating bifurcation from the first eigenvalue,  the Fredholm alternative, and showing the existence of infinitely many solutions. 
	Finally,  in Appendix, we give a proof for an example, which states that our class of singular weights is at least independent of the biggest one so far.
	
	\section{Preliminaries and variational setting}\label{Sec.Pre}
	
		\noindent
	In this section, we review some preliminaries of fractional Sobolev spaces, define the class of singular measurable functions that contains mostly previous ones and is at least independent of the biggest one so far, and provide the variational setting for our problem. 
We look for solutions of problem~\eqref{existence.sol} in the space 
	$$ 	W^{s,p}_{0}(\Omega):=\big\{u\in W^{s,p}(\mathbb{R}^N):\, u=0 \,\ \text{in}\ \mathbb{R}^N\setminus \Omega \big\} 	$$
	endowed with the standard Gagliardo norm
	\begin{equation}
	\label{norm}
	\|u\|:=\left(\int_{{\mathbb R}^{2N}}\frac{|u(x)-u(y)|^p}{|x-y|^{N+sp}}\diff x\diff y\right)^{1/p}.
	\end{equation}
	The space $W^{s,p}_{0}(\Omega)$ is a separable and uniformly convex Banach space and it can be defined as the completion of $C_c^\infty(\Omega)$ with respect to the norm \eqref{norm}. It is well-known that $W^{s,p}_{0}(\Omega)\hookrightarrow\hookrightarrow L^q(\Omega)$ for any $q\in (1,p_s^\ast)$. Moreover, $W^{s,p}_{0}(\Omega)\hookrightarrow L^{p_s^\ast}(\Omega)$ if $ps\ne N$ and $W^{s,p}_{0}(\Omega)\hookrightarrow L^q(\Omega)$ for all $q\in (1,\infty)$ if $ps=N$  (see for example \cite{DPV,FSV,BLP,IS}). We shall also include the $p$-Laplacian case and the solution space in this case is the usual Sobolev space $W_0^{1,p}(\Omega)$ endowed with the norm $\|u\|=\left(\int_\Omega |\nabla u|^p\diff x\right)^{1/p}.$ The notation $	W^{s,p}_{0}(\Omega)$ ($s\in (0,1]$) will be used to denote both the fractional Sobolev space defined above (when $0<s<1$) and the usual Sobolev space $W^{1,p}_{0}(\Omega)$ (when $s=1$). Note that $p^\ast$ coincides with $p_1^\ast.$
	\vskip4pt
	\noindent
	The following Hardy-type inequality is crucial for our arguments.
	
	\begin{theorem}{\rm(\cite[Theorem 2.1]{HPSS})}
		\label{Hardy}
		For any $p\in (1,\infty)$ and $s\in (0,1],$ it holds that
		$$
		\int_{\Omega}\frac{|u(x)|^p}{\operatorname{dist}(x,\partial\Omega)^{sp}}\diff x
		\leq C\|u\|^p, \,\,\quad
		\text{for any $u\in W^{s,p}_{0}(\Omega)$,}
		$$
		where $C$ is a positive constant depending only on $\Omega,N,p$ and $s$.
	\end{theorem}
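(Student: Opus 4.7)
The plan is to prove the fractional case $s\in(0,1)$ and the classical case $s=1$ separately, both by reducing to a model estimate on the half-space $\mathbb{R}^N_+=\{x_N>0\}$ via a finite covering of $\partial\Omega$ by bi-Lipschitz charts and a subordinate partition of unity, and then proving the half-space estimate by Fubini on top of a one-dimensional Hardy inequality.

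First I would establish the one-dimensional Hardy estimate. For $s=1$ this is the classical $\int_0^\infty |u(t)|^p\,t^{-p}\,\mathrm{d}t\le (p/(p-1))^p\int_0^\infty |u'(t)|^p\,\mathrm{d}t$ for $u\in C_c^\infty([0,\infty))$. For $s\in(0,1)$ with $sp\neq 1$ the analog is
\begin{equation*}
\int_0^\infty \frac{|u(t)|^p}{t^{sp}}\,\mathrm{d}t \;\le\; C_{s,p}\int_0^\infty\!\!\int_0^\infty \frac{|u(t)-u(r)|^p}{|t-r|^{1+sp}}\,\mathrm{d}t\,\mathrm{d}r
\end{equation*}
for $u\in C_c^\infty((0,\infty))$, which follows from the ground-state substitution $u(t)=t^{s}v(t)$ or alternatively from a direct dyadic decomposition argument. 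For $sp=1$ one perturbs the exponent and uses a covering argument instead; in our setting $sp\neq 1$ may be arranged or handled by interpolation, but I would note the case separately.

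Next I would upgrade to the half-space. Since $\operatorname{dist}(x,\partial\mathbb{R}^N_+)=x_N$ and $|x-y|\ge |x_N-y_N|$, Fubini in the tangential variables $x'\in\mathbb{R}^{N-1}$ together with the one-dimensional inequality applied to the slice $t\mapsto u(x',t)$ yields
\begin{equation*}
\int_{\mathbb{R}^N_+}\frac{|u(x)|^p}{x_N^{sp}}\,\mathrm{d}x \;\le\; C\,[u]_{W^{s,p}(\mathbb{R}^N)}^p
\end{equation*}
for $u\in C_c^\infty(\mathbb{R}^N_+)$, and similarly for $s=1$.

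Finally I would localize. Cover $\partial\Omega$ by finitely many balls $B_j$ on which a bi-Lipschitz chart $\Phi_j$ straightens the boundary, augmented with an interior patch $B_0\Subset\Omega$; fix a partition of unity $\{\eta_j\}$ subordinate to this cover. On $B_0$ the estimate is trivial since $\operatorname{dist}(\cdot,\partial\Omega)$ is bounded below. On each boundary patch, push $\eta_j u$ forward by $\Phi_j$, apply the half-space Hardy inequality, and pull back; here one uses that bi-Lipschitz maps and smooth cutoffs induce bounded operators on $W^{s,p}$. Summing over $j$ and using that the tails $\int_{|x|>R}$ of the Gagliardo double integral are controlled by $\|u\|^p$ via Theorem-style standard estimates gives the result.

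The main obstacle is the localization step in the fractional regime: unlike the gradient norm, the Gagliardo seminorm is genuinely nonlocal, so cutting off by $\eta_j$ and composing with $\Phi_j$ requires care to verify that the nonlocal interaction terms $\int\!\int \frac{|(\eta_j u)(x)-(\eta_j u)(y)|^p}{|x-y|^{N+sp}}\,\mathrm{d}x\,\mathrm{d}y$ are controlled by $\|u\|^p$, typically via splitting $|(\eta_j u)(x)-(\eta_j u)(y)|\le \|\eta_j\|_\infty|u(x)-u(y)|+|u(y)|\,|\eta_j(x)-\eta_j(y)|$ and bounding the second piece using the Lipschitz continuity of $\eta_j$ together with a second application of the already-established Hardy inequality (bootstrapped from simpler subdomains) or directly via the embedding $W^{s,p}_0(\Omega)\hookrightarrow L^p(\Omega)$.
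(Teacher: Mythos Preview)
The paper does not prove Theorem~\ref{Hardy}; it is quoted verbatim from \cite[Theorem~2.1]{HPSS} and used as a black box, so there is no in-paper argument to compare against. Your outline is a standard and essentially correct route to the result, consistent with how such inequalities are established in the literature (e.g.\ Dyda, Loss--Sloane), so in that sense it is an acceptable substitute for the missing proof.

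Two points deserve attention, though. First, your half-space step in the fractional case is glossed over: the sentence ``Fubini in the tangential variables \dots\ together with the one-dimensional inequality applied to the slice'' hides a genuine difficulty, since the one-dimensional Gagliardo seminorm of $t\mapsto u(x',t)$ is \emph{not} obtained by simply restricting the $N$-dimensional seminorm, and the pointwise bound $|x-y|\ge |x_N-y_N|$ goes the wrong way. One needs the slicing identity $|t|^{-(1+sp)}=c_N\int_{\mathbb{R}^{N-1}}(|z'|^2+t^2)^{-(N+sp)/2}\,\mathrm{d}z'$ together with a triangle-inequality splitting to bound $\int_{\mathbb{R}^{N-1}}[u(x',\cdot)]_{W^{s,p}(\mathbb{R})}^p\,\mathrm{d}x'$ by $C[u]_{W^{s,p}(\mathbb{R}^N)}^p$; this should be stated rather than absorbed into ``Fubini''. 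Second, your localization by bi-Lipschitz charts presupposes that $\partial\Omega$ is (at least) Lipschitz, an assumption the present paper does not make explicit; you should flag it, or alternatively invoke a version of the fractional Hardy inequality valid on arbitrary bounded domains when $sp>1$ (as in Dyda's work), which bypasses the flattening altogether.
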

	
	In what follows, let us denote by
	\begin{equation}\label{dist}
	\rho(x):=\operatorname{dist}(x,\partial\Omega), \,\quad x\in \Omega,
	\end{equation}
	the distance from $x\in\Omega$ to $\partial \Omega$ and by $|\cdot|_p$ the usual norm in the space $L^p(\Omega).$  Denote by $|S|$ the Lebesgue measure of $S\subset \mathbb{R}^N.$ The symbol $B(x_0,r_0)$ (or simply $B_{r_0}$ if $x_0$ is understood) stands for the open ball centered at $x_0$ with radius $r_0$ in $\mathbb{R}^N$.
	
	We consider the following class of singular weights, that is bigger than $\C_q.$
	
	
	\begin{definition}[Class $\W_q$]
\rm
		\label{classB}
		For $q\in[1,p_s^\ast)$, let $\W_q$ denote the class
		of measurable functions $h$ such that $h\rho^{sa}\in L^r(\Omega)$ for some $a\in [0,q)$ and $r\in (1,\infty)$ satisfying
		$\frac{1}{r}+\frac{a}{p}+\frac{q-a}{p_s^\ast}<1.$ 
	\end{definition}
	We also consider the subclass $\widetilde{\W_q}$ 
	of $\W_q$ the class
	of measurable functions $h$ such that $h\rho^{sa}\in L^r(\Omega)$ for some $a\in [0,1]$ and $r\in (1,\infty)$ satisfying
	$\frac{1}{r}+\frac{a}{p}+\frac{\max\{p,q\}-a}{p_s^\ast}<1$. 
	\vskip4pt
	It is worth mentioning that in most papers on fractional $p$-Laplacian (resp. $p$-Laplacian) with  singular weights, the weights were assumed to belong to $\A_q,$ the class of measurable functions $h\in L^r(\Omega)$ for some $r>1$
	satisfying $\frac{1}{r}+\frac{q}{p_s^\ast}<1$ (resp. $\frac{1}{r}+\frac{q}{p^\ast}<1$) which is related to the H\"older inequality. Clearly, $\A_q\subset \W_q$ and $\A_p\subset\widetilde{\W_p}$ (by choosing $a=0$). The following example gives a concrete weight $h$ that belongs to $L^r(\Omega)\cap (\widetilde{\W_p}\setminus L^{N/sp}(\Omega))$ for some $r\in [1,\frac{N}{sp})$ (note that $\A_p\subset L^{N/sp}(\Omega)$).
	\begin{example}\label{Wq.not.Aq}\rm
		Let $
		h(x)=(1-|x|)^{-\beta},\ \Omega=B(0,1),$ and $s\in (0,1]$ (we also include the $p$-Laplacian case). Then $h\in \W_q$ if $\beta<sa+r^{-1}$ for some $r>1$ and  $0\leq a< q$ with $1/r+a/p+(q-a)/p^*_s<1$. For simplicity, let $N=3,\ p=2$ and $\beta=\frac{2s}{3}+\epsilon$\ ($\epsilon\in [0,\frac{3-2s}{3})$). Clearly, $h\notin L^r(\Omega)$ for all $r\geq\frac{3}{2s+3\epsilon},$ and hence, $h\notin L^{3/2s}(\Omega)$. On the other hand,  it is easy to verify that for $\delta\in (\frac{s}{3-s}\epsilon,\frac{2s}{3})$ and $a\in (\epsilon+\delta,\min\{1,\frac{3\delta}{s}\}),$ we have that  $\frac{2s-3\delta}{3}+\frac{a}{2}+\frac{2-a}{2_s^\ast}<1$ and $h\rho^a \in L^{\frac{3}{2s-3\delta}}(\Omega).$ That is, $h\in\widetilde{\W_2}$. Obviously, $h\in L^r(\Omega)$ with $1\leq r< \frac{3}{2s+3\epsilon}.$
	\end{example}
The next example shows that even for the linear case $p=2$, our class of weights is not contained in the Lorentz spaces $L^{\frac{N}{2},q_0}(\Omega)$ treated in \cite{lucia}. We shall show it in Appendix. 
\begin{example}\label{Wq.not.Lorentz}\rm
	Let $N=3,\ p=2,$ $s=1$, and $\Omega=B(0,1)$. Let $h(x)=(1-|x|)^{-\frac{2}{3}}$. Then 
$h\in \widetilde{\W_2}$ but $h\not \in L^{3/2}(\Omega)\cup L^{\frac{3}{2},q_0}(\Omega)$ for any $q_0\in (1,\infty).$ 
\end{example}

For $h\in\W_q$, we shall use the following seminorm in several arguments:
$$|u|_{q,h}:=\left(\int_\Omega|h||u|^q\diff x\right)^{1/q}.$$	
Using the H\"older and Hardy inequalities, and the imbedding $W_0^{s,p}(\Omega)\hookrightarrow L^t(\Omega)$ for $t\in [1,p_s^\ast),$ we easily obtain the following lemma which is useful in our future estimates. 
	\begin{lemma}
		\label{norms.est}
		Let $h\in\W_q$ and let $b\in(1,p_s^\ast)$ be such that $\frac{1}{r}+\frac{a}{p}+\frac{q-a}{b}=1$. Then for each decomposition $a=a'+a''$ with $a'\in [0,q-1]$ and $a''\in [0,1)$, there exists a positive constant $C>0$ such that 
		$$\int_{\Omega}|h(x)||u|^{q-1}|v|\diff x\leq C\|u\|^{a'}\ |u|_b^{q-1-a'}\|v\|^{a''}\ |v|_b^{1-a''},\quad \forall u,v\in W_0^{s,p}(\Omega).$$
	Consequently, we have
		$$
		\int_{\Omega}|h(x)||u|^{q-1}|v|\diff x\leq C\|u\|^{q-1}\|v\|^{a''}\ |v|_b^{1-a''},\quad \forall u,v\in W_0^{s,p}(\Omega), $$
		and
		$$|u|_{q,h}\leq C\|u\|,\quad \forall u\in W_0^{s,p}(\Omega).$$
	\end{lemma}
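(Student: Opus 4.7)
The plan is to derive all three estimates from a single application of the generalized H\"older inequality combined with the Hardy-type bound of Theorem~\ref{Hardy}. The integrand $|h|\,|u|^{q-1}|v|$ will be split as a product of five nonnegative factors, each living in an $L^t$-space whose exponent reciprocals sum to~$1$.

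Concretely, since $a=a'+a''$ with $a'\in[0,q-1]$ and $a''\in[0,1)$, one writes
\begin{equation*}
|h|\,|u|^{q-1}|v| \;=\; \bigl(|h|\rho^{sa}\bigr)\cdot\bigl(\rho^{-s}|u|\bigr)^{a'}\cdot\bigl(\rho^{-s}|v|\bigr)^{a''}\cdot |u|^{q-1-a'}\cdot |v|^{1-a''},
\end{equation*}
where all five exponents $a',a'',q-1-a',1-a''$ are nonnegative. Applying H\"older's inequality with exponents $r,\ p/a',\ p/a'',\ b/(q-1-a'),\ b/(1-a'')$---with the usual convention that any factor carrying exponent $0$ is omitted, thereby covering the endpoint cases $a'\in\{0,q-1\}$ and $a''\in\{0,1\}$---is legitimate because
\begin{equation*}
\frac{1}{r}+\frac{a'}{p}+\frac{a''}{p}+\frac{q-1-a'}{b}+\frac{1-a''}{b} \;=\; \frac{1}{r}+\frac{a}{p}+\frac{q-a}{b} \;=\; 1
\end{equation*}
by the hypothesis on $b$.

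This yields
\begin{equation*}
\int_\Omega |h|\,|u|^{q-1}|v|\diff x \;\leq\; |h\rho^{sa}|_r\, |\rho^{-s}u|_p^{a'}\, |\rho^{-s}v|_p^{a''}\, |u|_b^{q-1-a'}\, |v|_b^{1-a''}.
\end{equation*}
The first factor is finite by Definition~\ref{classB}, while Theorem~\ref{Hardy} gives $|\rho^{-s}u|_p\leq C\|u\|$ and $|\rho^{-s}v|_p\leq C\|v\|$, which together produce the first stated inequality. The second inequality follows by absorbing $|u|_b^{q-1-a'}\leq C\|u\|^{q-1-a'}$ through the Sobolev embedding $W^{s,p}_0(\Omega)\hookrightarrow L^b(\Omega)$ and combining with the factor $\|u\|^{a'}$. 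Finally, taking $v=u$ in the second estimate and using the embedding once more to bound $|u|_b^{1-a''}\leq C\|u\|^{1-a''}$ gives $|u|_{q,h}^q\leq C\|u\|^q$, which is the third inequality.

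The only point requiring any real attention is the identity that makes the H\"older exponents conjugate, which is engineered exactly by the prescribed equality $\frac{1}{r}+\frac{a}{p}+\frac{q-a}{b}=1$; the rest is bookkeeping, together with the elementary remark that a decomposition $a=a'+a''$ with $a'\in[0,q-1]$ and $a''\in[0,1)$ always exists since $a\in[0,q)$.
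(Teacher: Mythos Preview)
Your proof is correct and follows essentially the same route as the paper: the identical five-factor splitting $|h\rho^{sa}|\cdot(\rho^{-s}|u|)^{a'}\cdot|u|^{q-1-a'}\cdot(\rho^{-s}|v|)^{a''}\cdot|v|^{1-a''}$, the same H\"older exponents summing to~$1$, then Hardy for the $\rho^{-s}$-factors and the embedding $W_0^{s,p}(\Omega)\hookrightarrow L^b(\Omega)$ for the consequences. One cosmetic slip: you list $a''=1$ among the endpoint cases, but the hypothesis requires $a''\in[0,1)$, so that case does not arise.
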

	\begin{proof} For $u\in W_0^{s,p}(\Omega)$, using the H\"older and Hardy inequalities  we have
		\begin{align*}
		\int_{\Omega}|h(x)||u|^{q-1}|v|dx&=\int_{\Omega}|h\rho^{sa}|\left|\frac{u}{\rho^s}\right|^{a'}|u|^{q-1-a'}|\left|\frac{v}{\rho^s}\right|^{a''}|v|^{1-a''}\diff x\\
		&\leq|h\rho^{sa}|_r\ \left|\frac{u}{\rho^s}\right|_p^{a'}\ |u|_b^{q-1-a'}\ \left|\frac{v}{\rho^s}\right|_p^{a''}\ |v|_b^{1-a''}\\
		&\leq C\|u\|^{a'}\ |u|_b^{q-1-a'}\|v\|^{a''}\ |v|_b^{1-a''}.
		\end{align*}
	We then invoke the imbedding $W_0^{s,p}(\Omega)\hookrightarrow L^b(\Omega)$	to get the desired conclusion.
	\end{proof}
	In light of Lemma~\ref{norms.est} and the compact imbedding $W_0^{s,p}(\Omega)\hookrightarrow\hookrightarrow L^t(\Omega)$ for $t\in [1,p_s^\ast),$ we easily obtain the next lemma.
	\begin{lemma}
		\label{convergence}
		Let $h\in\W_q$ and let $u_n\rightharpoonup u$ in $W_0^{s,p}(\Omega)$. Then it holds that
		$$|u_n-u|_{q,h}\to 0\quad\text{and}\quad |u_n|_{q,h}\to |u|_{q,h}.$$
		Moreover, $\int_\Omega h(x)|u_n|^q\diff x\to \int_\Omega h(x)|u|^q\diff x$ and $\int_\Omega h(x)|u_n|^{q-2}u_nu\diff x\to \int_\Omega h(x)|u|^q\diff x.$ 
	\end{lemma}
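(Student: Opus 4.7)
The plan is to reduce everything to Lemma \ref{norms.est} applied with a suitable exponent $b \in (1, p_s^\ast)$, combined with the compact embedding $W_0^{s,p}(\Omega) \hookrightarrow\hookrightarrow L^b(\Omega)$. Since $h \in \W_q$, there exist $a \in [0,q)$ and $r > 1$ with $\frac{1}{r} + \frac{a}{p} + \frac{q-a}{p_s^\ast} < 1$; I choose $b \in (1, p_s^\ast)$ turning this into an equality. Weak convergence of $\{u_n\}$ yields boundedness in $W_0^{s,p}(\Omega)$ and, via the compact embedding, strong convergence $u_n \to u$ in $L^b(\Omega)$.

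For $|u_n - u|_{q,h} \to 0$, I would apply Lemma \ref{norms.est} with both entries replaced by $u_n - u$ and collapse the decomposition $a = a' + a''$ into a single exponent, obtaining
\[
|u_n - u|_{q,h}^q \leq C \|u_n - u\|^{a} |u_n - u|_b^{q - a}.
\]
The first factor is bounded while $|u_n - u|_b \to 0$, and the crucial point is that $q - a > 0$ (exactly the relaxation from $a \in [0,q-1]$ to $a \in [0,q)$ built into the class $\W_q$), so the bound vanishes. The convergence $|u_n|_{q,h} \to |u|_{q,h}$ then follows from the reverse triangle inequality for the seminorm $|\cdot|_{q,h}$.

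For the two integral convergences, I use the elementary identities $\bigl||s|^q - |t|^q\bigr| \leq q(|s|^{q-1} + |t|^{q-1})|s - t|$ and $|s|^{q-2} s\, t = |s|^q - |s|^{q-2} s (s - t)$. These reduce both statements to bounding expressions of the form $\int_\Omega |h| |u_n|^{q-1} |u_n - u| \diff x$ (and its analogue with $u$ in place of one $u_n$), to which Lemma \ref{norms.est} provides the bound $C \|u_n\|^{a'}|u_n|_b^{q-1-a'} \|u_n - u\|^{a''} |u_n - u|_b^{1-a''}$. Since $a'' \in [0,1)$ forces $1 - a'' > 0$, and $|u_n - u|_b \to 0$ while $\|u_n - u\|$ stays bounded, the bound tends to zero. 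I do not anticipate any serious obstacle: the argument is essentially bookkeeping, the only conceptual point being to verify that every factor involving $|u_n - u|_b$ appears with a strictly positive exponent, which is guaranteed by the range $a \in [0,q)$ and the constraint $a'' < 1$ in the decomposition supplied by Lemma \ref{norms.est}.
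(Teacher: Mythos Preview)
Your proposal is correct and follows exactly the route the paper indicates: the paper itself does not give a detailed proof but simply remarks that the lemma follows from Lemma~\ref{norms.est} together with the compact embedding $W_0^{s,p}(\Omega)\hookrightarrow\hookrightarrow L^t(\Omega)$ for $t\in[1,p_s^\ast)$, and your argument is precisely the natural way to unpack that hint. The key observations you single out---that $q-a>0$ and $1-a''>0$ guarantee a strictly positive power of $|u_n-u|_b$ in every estimate---are exactly what makes the relaxed condition $a\in[0,q)$ sufficient.
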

	\vskip3pt
	\noindent
It is easy to see that the functional $u\mapsto \frac{1}{p}\|u\|^p$ is of class $C^1(W_0^{s,p}(\Omega),\mathbb{R})$ and its derivative is the operator $A: W_0^{s,p}(\Omega)\to W_0^{-s,p'}(\Omega)$ given by
	\begin{equation}\label{def.A}
\langle A(u),v\rangle:=\int_{\mathbb{R}^{2N}}\frac{|u(x)-u(y)|^{p-2}(u(x)-u(y))(v(x)-v(y))}{|x-y|^{N+sp}}\diff x\diff y
	\end{equation}
for all $ u,v\in W_0^{s,p}(\Omega).$ Here $W_0^{-s,p'}(\Omega)$ and $\langle \cdot,\cdot \rangle$ denote the dual space of $W_0^{s,p}(\Omega)$ and the duality pairing between $W_0^{s,p}(\Omega)$ and $W_0^{-s,p'}(\Omega),$ respectively. Furthermore, we have the following.
\begin{lemma}[\cite{IS,MR2640827}]\label{(S_+)}
	The operator $A$ is of type $({\mathcal S_+})$, i.e., if 
		$u_n\rightharpoonup u$ in $W_0^{s,p}(\Omega)$ and $\underset{n\to\infty}{\lim\sup}\ \langle A(u_n),u_n-u\rangle\leq 0$, 		then $u_n\to u$ in $W_0^{s,p}(\Omega).$
	\end{lemma}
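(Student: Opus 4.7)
My plan is to establish strong convergence by combining two ingredients: (i) convergence of norms $\|u_n\| \to \|u\|$, and (ii) the uniform convexity of $W_0^{s,p}(\Omega)$ (stated in the excerpt), which together yield $u_n \to u$ via the Radon--Riesz (Kadec--Klee) property. This converts the problem into showing norm convergence, which is purely a convexity computation.

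For step (i), I would exploit that the functional $J(u) := \frac{1}{p}\|u\|^p$ is convex and of class $C^1$ with $J' = A$. Convexity follows because, for each fixed pair $(x,y)$, the integrand $|u(x)-u(y)|^p$ is the composition of the linear map $u \mapsto u(x)-u(y)$ with the convex map $t \mapsto |t|^p$, so the double integral defining $\|u\|^p$ is convex in $u$; the $C^1$ property and identification of the derivative follow from differentiation under the integral sign justified by dominated convergence. The standard subdifferential inequality for convex $C^1$ functionals then yields
$$J(u_n) \leq J(u) + \langle A(u_n), u_n - u \rangle \quad \text{and} \quad J(u_n) \geq J(u) + \langle A(u), u_n - u \rangle.$$
Since $u_n \rightharpoonup u$, the second inequality gives $\liminf_n J(u_n) \geq J(u)$, while the first together with the hypothesis $\limsup_n \langle A(u_n), u_n - u \rangle \leq 0$ gives $\limsup_n J(u_n) \leq J(u)$. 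Hence $\|u_n\| \to \|u\|$, and step (ii) then closes the argument.

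The only real obstacle is making sure the convexity framework is legitimate; once $J$ is identified as a convex $C^1$ functional with derivative $A$, everything is mechanical. If one prefers a hands-on route bypassing uniform convexity, an alternative is to use Simon-type pointwise inequalities, namely
$$|\xi - \eta|^p \leq C \bigl( |\xi|^{p-2}\xi - |\eta|^{p-2}\eta \bigr)(\xi - \eta) \quad\text{if } p \geq 2,$$
together with the $1 < p < 2$ analogue in which a H\"older factor involving $(|\xi|^p + |\eta|^p)^{(2-p)/2}$ appears. Applying these with $\xi = u_n(x) - u_n(y)$ and $\eta = u(x) - u(y)$, integrating against $|x-y|^{-N-sp}$, and using the identity $\langle A(u_n) - A(u), u_n - u \rangle = \langle A(u_n), u_n - u \rangle - \langle A(u), u_n - u \rangle$ with $\langle A(u), u_n - u \rangle \to 0$ (by weak convergence), we directly control $\|u_n - u\|^p$, with the $1 < p < 2$ case requiring one further H\"older estimate using boundedness of $\{u_n\}$. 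Either approach gives the conclusion $u_n \to u$ in $W_0^{s,p}(\Omega)$.
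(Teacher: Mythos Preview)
Your proof is correct. The paper itself does not give a proof of this lemma; it merely records the result with citations to \cite{IS,MR2640827}, so there is no in-paper argument to compare against. Both routes you outline are standard and valid: the convexity/Radon--Riesz argument is particularly clean here since the paper already states that $W_0^{s,p}(\Omega)$ is uniformly convex and that $u\mapsto \tfrac{1}{p}\|u\|^p$ is $C^1$ with derivative $A$, so all the ingredients you invoke are available verbatim.
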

	\vskip1pt
	\noindent
We shall investigate the existence of solutions to problem~\eqref{existence.sol} with nonlinear term $f$ satisfying the following condition.
\begin{itemize}
	\item [(F)] $f:\ \Omega\times \mathbb{R}\to \mathbb{R}$ is a Carath\'eodory function such that 
	$$|f(x,t)|\leq \sum_{i=1}^{m}h_i(x)|t|^{q_i-1}\  \text{for a.e.}\ x\in\Omega\  \text{and all}\  t\in\mathbb{R},$$
	where $q_i\in [1,p_s^\ast)$ and $h_i$ are nonnegative functions of class $\W_{q_i}$ ($i=1,\cdots,m$).
\end{itemize} 
Define\ \ $\Psi,\Phi:\ W_0^{s,p}(\Omega)\to{\mathbb R}$ as
$$
\Psi(u):=\int_{\Omega}F(x,u)\diff x\ \ \text{and}\ \ \Phi(u):=\frac{1}{p}\|u\|^p-\Psi(u),
$$
where $F(x,t):=\int_{0}^{t}f(x,\tau)\diff \tau.$ The following compact results are crucial in our future arguments.
\begin{lemma}\label{PS1}
	Assume that $(\textup{F})$ holds. Then the following statements hold.
	\begin{itemize}
		\item [(i)] $\Psi$ is of class $C^1(W_0^{s,p}(\Omega),\mathbb{R})$ and its derivative is given by
	$$\langle \Psi'(u),v\rangle=\int_\Omega f(x,u)v\diff x,\quad \forall u,v\in W_0^{s,p}(\Omega).$$
	Moreover, $\Psi': W_0^{s,p}(\Omega)\to W_0^{-s,p'}(\Omega)$ is compact.
	\item [(ii)] Any bounded sequence $\{u_n\}_{n\in {\mathbb N}} \subset W_0^{s,p}(\Omega)$ such that $\Phi'(u_n)\to 0$ has a convergent subsequence.\ In particular, bounded Palais-Smale sequences of $\Phi$ are precompact in $W_0^{s,p}(\Omega)$.
		\end{itemize}
\end{lemma}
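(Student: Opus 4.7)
The plan is to derive part (i) from the growth assumption (F), Lemma \ref{norms.est}, and Vitali's convergence theorem, and then to obtain part (ii) from part (i) via the $(\mathcal{S}_+)$ property of $A$ supplied by Lemma \ref{(S_+)}.

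For the Gâteaux differentiability in part (i), I fix $u,v\in W_0^{s,p}(\Omega)$ and examine the difference quotient $t^{-1}\bigl(F(x,u+tv)-F(x,u)\bigr)$. The mean value theorem together with (F) gives a pointwise bound of the form $|v|\sum_i h_i(x)(|u|+|v|)^{q_i-1}$, which is integrable by Lemma \ref{norms.est} applied to each $h_i\in\W_{q_i}$. Lebesgue dominated convergence then yields the formula $\langle\Psi'(u),v\rangle=\int_\Omega f(x,u)v\,\diff x$, and continuity of $\Psi'$ (which follows from the compactness argument below) upgrades Gâteaux to Fréchet, so $\Psi\in C^1(W_0^{s,p}(\Omega),\mathbb{R})$.

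To show that $\Psi'$ is compact, let $u_n\rightharpoonup u$ in $W_0^{s,p}(\Omega)$ and argue by contradiction. Suppose there exist $\epsilon>0$ and a sequence $v_n$ with $\|v_n\|\le 1$ such that
\[
\left|\int_\Omega\bigl[f(x,u_n)-f(x,u)\bigr]v_n\,\diff x\right|\ge\epsilon.
\]
By the compact embeddings $W_0^{s,p}(\Omega)\hookrightarrow\hookrightarrow L^{b_i}(\Omega)$ (with $b_i$ chosen as in Lemma \ref{norms.est} for each $h_i$) and a diagonal subsequence argument, one may assume $u_n\to u$ and $v_n\to v^\star$ in every such $L^{b_i}$ and almost everywhere in $\Omega$, whence $\bigl[f(x,u_n)-f(x,u)\bigr]v_n\to 0$ a.e.\ since $f$ is Carathéodory. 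To apply Vitali, I would establish equi-integrability through the estimate, valid for any measurable $E\subset\Omega$,
\[
\int_E h_i|u_n|^{q_i-1}|v_n|\,\diff x\le C\Bigl(\int_E|h_i\rho^{sa_i}|^{r_i}\,\diff x\Bigr)^{1/r_i}\|u_n\|^{a_i'}|u_n|_{b_i}^{q_i-1-a_i'}\|v_n\|^{a_i''}|v_n|_{b_i}^{1-a_i''},
\]
obtained by the same Hölder/Hardy splitting that proves Lemma \ref{norms.est}. Since $h_i\rho^{sa_i}\in L^{r_i}(\Omega)$, the first factor tends to $0$ as $|E|\to 0$ by absolute continuity of the integral, while the remaining factors are uniformly bounded in $n$; the symmetric estimate controls the term with $u$ in place of $u_n$. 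Summing over $i$, Vitali's convergence theorem forces the integral to vanish in the limit, contradicting the assumption.

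For part (ii), let $u_n$ be bounded with $\Phi'(u_n)\to 0$ and pass to a weakly convergent subsequence $u_n\rightharpoonup u$. By part (i), $\Psi'(u_n)\to\Psi'(u)$ strongly in $W_0^{-s,p'}(\Omega)$, and therefore $A(u_n)=\Phi'(u_n)+\Psi'(u_n)\to\Psi'(u)$ strongly. Splitting
\[
\langle A(u_n),u_n-u\rangle=\langle A(u_n)-\Psi'(u),u_n-u\rangle+\langle\Psi'(u),u_n-u\rangle,
\]
the first term tends to $0$ as a norm-strong against bounded pairing, and the second tends to $0$ since $u_n\rightharpoonup u$; Lemma \ref{(S_+)} then promotes the weak convergence of $u_n$ to strong convergence. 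The main obstacle is the equi-integrability step: because the weights $h_i$ need not lie in $L^1(\Omega)$ one cannot dominate $|f(x,u_n)||v_n|$ by a fixed $L^1$ function, and the refined decomposition $a_i=a_i'+a_i''$ from Lemma \ref{norms.est} is precisely the tool that absorbs the small-set contribution into the weight $h_i\rho^{sa_i}\in L^{r_i}(\Omega)$, uniformly in $n$.
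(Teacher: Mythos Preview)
Your argument is correct. For part (ii) you and the paper do the same thing: compactness of $\Psi'$ plus the $(\mathcal{S}_+)$ property of $A$. For the compactness of $\Psi'$ in part (i), however, you take a genuinely different route. The paper does not keep the test functions $v_n$ in the picture; instead it first bounds the dual norm by a single weighted Lebesgue norm,
\[
\|\Psi'(u_n)-\Psi'(u)\|_{W_0^{-s,p'}(\Omega)}\le C\,\bigl|(f(\cdot,u_n)-f(\cdot,u))\rho^{sa_{i_\ast}''}\bigr|_{\widetilde r_{i_\ast}},
\]
where $a_{i_\ast}''\in[0,1)$ and $\widetilde r_{i_\ast}$ are chosen (via a somewhat delicate bookkeeping of exponents, picking a common $\widetilde q=\max_i\widetilde q_i$ and $\widetilde r_{i_\ast}=\min_i\widetilde r_i$) so that all $m$ growth terms fit into one $L^{\widetilde r_{i_\ast}}$-estimate. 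It then passes to a subsequence with $u_n\to u$ in $L^{\widetilde q}$, extracts a pointwise majorant $v\in L^{\widetilde q}$, and concludes by the Lebesgue dominated convergence theorem. Your Vitali/contradiction argument trades that exponent bookkeeping for carrying the $v_n$ along: the equi-integrability is read off directly from the absolute continuity of $\int_E|h_i\rho^{sa_i}|^{r_i}\diff x$, and the contradiction setup absorbs the subsequence issue automatically. The paper's approach yields a slightly sharper, test-function-free estimate and makes the full-sequence convergence more explicit; yours is more transparent about where the smallness comes from and avoids constructing the auxiliary exponents $\widetilde q,\widetilde r_{i_\ast}$.
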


\begin{proof} (i) The proof is standard and we only prove the compactness of $\Psi'.$ Let $u_n\rightharpoonup u$ in $W_0^{s,p}(\Omega).$ We aim to show that $\Psi'(u_n)\to \Psi'(u)$ in $W_0^{-s,p'}(\Omega)$. For each $i\in\{1,\cdots,m\}$, $h_i\in \W_{q_i},$ i.e., $h_i\rho^{sa_i}\in L^{r_i}(\Omega)$ 
	for some $a_i\in [0,q_i)$ and $r_i\in (1,\infty)$ satisfying
	$\frac{1}{r_i}+\frac{a_i}{p}+\frac{q_i-a_i}{p_s^\ast}<1.$ Decompose $a_i=a_i'+a_i''$ with $a_i'\in [0,q_i-1]$ and $a_i''\in [0,1).$ Let $\widetilde{q}_i\in (1,p_s^\ast)$ be such that $\frac{1}{r_i}+\frac{a_i}{p}+\frac{q_i-a_i}{\widetilde{q}_i}=1,$ i.e., $\frac{1}{r_i}+\frac{a'_i}{p}+\frac{q_i-1-a'_i}{\widetilde{q}_i}+\frac{a_i''}{p}+\frac{1-a_i''}{\widetilde{q}_i}=1.$ Let $\widetilde{r}_i\in(1,\infty)$ be such that
$$\frac{1}{\widetilde{r}_i}=\frac{1}{r_i}+\frac{a'_i}{p}+\frac{q_i-1-a'_i}{\widetilde{q}_i},$$
and hence,
	$$\frac{1}{\widetilde{r}_i}+\frac{a_i''}{p}+\frac{1-a_i''}{\widetilde{q}_i}=1.$$
Let $i_\ast\in\{1,\cdots,m\}$ be such that $\widetilde{r}_{i_\ast}=\min_{1\leq i\leq m}\widetilde{r}_{i}$.	 As in the proof of Lemma~\ref{norms.est} we have
	\begin{align*}
	|\langle \Phi'(u_n)-\Phi'(u),v\rangle|&\leq\int_\Omega\left|\left(f(x,u_n)-f(x,u)\right)\rho^{sa_{i_\ast}''}\right|\left|\frac{v}{\rho^s}\right|^{a_{i_\ast}''}|v|^{1-a_{i_\ast}''}\diff x \\
		&\leq C|(f(\cdot,u_n)-f(\cdot,u))\rho^{sa_{i_\ast}''}|_{\widetilde{r}_{i_\ast}}\|v\|,\quad \forall v\in W_0^{s,p}(\Omega).
	\end{align*}
	Hence,
	\begin{equation}\label{Psi.est}
	\|\Psi'(u_n)- \Psi'(u)\|_{W_0^{-s,p'}(\Omega)}\leq C|(f(\cdot,u_n)-f(\cdot,u))\rho^{sa_{i_\ast}''}|_{\widetilde{r}_{i_\ast}},\quad \forall n\in\mathbb{N}.
	\end{equation}
For each $i\in\{1,\cdots,m\}$, we have $\frac{1}{r_i}+\frac{a_i'}{p}+\frac{q_i-1-a_i'}{\widetilde{q}_i}=\frac{1}{\widetilde{r}_i}\leq\frac{1}{\widetilde{r}_{i_\ast}}.$ Thus,
\begin{equation}\label{Psi.ex}
\frac{\widetilde{r}_{i_\ast}}{r_i}+\frac{a_i'\widetilde{r}_{i_\ast}}{p}+\frac{(q_i-1-a_i')\widetilde{r}_{i_\ast}}{\widetilde{q}}+\frac{t_i}{\widetilde{q}}=1,
\end{equation}
where $\widetilde{q}:=\max_{1\leq i\leq m}\widetilde{q}_i$ and $t_i\in [0,\widetilde{q}).$ Since $W_0^{s,p}(\Omega)\hookrightarrow\hookrightarrow L^{\widetilde{q}}  (\Omega),$ up to a subsequence we have
\begin{equation}\label{Psi.ae conver}
u_n\to u\quad \text{a.e. in}\quad \Omega
\end{equation}
and 
\begin{equation}\label{Psi.Lq conver}
u_n\to u\quad \text{in}\quad L^{\widetilde{q}}(\Omega).
\end{equation}
From \eqref{Psi.ae conver}, we have
	\begin{equation}\label{Psi.ae conver2}
	f(x,u_n)\to f(x,u)\quad\text{for a.e.}\quad x\in\Omega.
	\end{equation}
Also, from \eqref{Psi.Lq conver} we find $v\in L^{\widetilde{q}}(\Omega)$ such that, up to a subsequence of $\{u_n\}$,
	\begin{equation*}
	|u_n(x)|\leq v(x)\quad \text{for a.e.}\quad x\in \Omega,\ \ \forall n\in\mathbb{N}.
	\end{equation*}
	Thus, for a.e. $x\in \Omega$ and for all $n\in\mathbb{N}$ we have
	\begin{align}\label{Psi.est1}
	|f(x,u_n)\rho^{sa_{i_\ast}''}|^{\widetilde{r}_{i_\ast}}&\leq m^{\widetilde{r}_{i_\ast}-1}\left[\sum_{i=1}^{m}(h_i\rho^{sa_{i_\ast}''})^{\widetilde{r}_{i_\ast}}|v|^{(q_i-1)\widetilde{r}_{i_\ast}}\right]\notag\\
	& \ \ \    =m^{\widetilde{r}_{i_\ast}-1}\left[\sum_{i=1}^{m}(h_i\rho^{a_{i}s})^{\widetilde{r}_{i_\ast}}\left|\frac{v}{\rho^s}\right|^{a_i'\widetilde{r}_{i_\ast}}|v|^{(q_i-1-a_i')\widetilde{r}_{i_\ast}}\right].
	\end{align}	
	By \eqref{Psi.ex}, we have that $m^{\widetilde{r}_{i_\ast}-1}\left[\sum_{i=1}^{m}(h_i\rho^{a_{i}s})^{\widetilde{r}_{i_\ast}}\left|\frac{v}{\rho^s}\right|^{a_i'\widetilde{r}_{i_\ast}}|v|^{(q_i-1-a_i')\widetilde{r}_{i_\ast}}\right]\in L^1(\Omega)$ in view of the H\"older and Hardy inequalities. Similarly we have that $|f(\cdot,u)\rho^{sa_{i_\ast}''}|^{\widetilde{r}_{i_\ast}}\in L^1(\Omega).$ 
	From these facts and \eqref{Psi.ae conver2}, we have that $f(\cdot,u_n)\to f(\cdot,u)$ in $L^{\widetilde{r}_{i_\ast}}(\Omega)$ in view of the Lebesgue dominated convergence theorem. Then, $\Psi'(u_n) \to \Psi'(u)$ in $W_0^{-s,p'}(\Omega)$ due to \eqref{Psi.est}. That is, we have just shown the compactness of $\Psi'.$
	
	(ii) By the property $({\mathcal S}_+)$ of $A$ and the compactness of $\Psi',$ we easily obtain the conclusion.
	
	\end{proof}
\begin{definition}
A (weak) solution of problem \eqref{existence.sol} is a function $u\in W_0^{s,p}(\Omega)$ such that
$f(\cdot,u)\in W_0^{-s,p'}(\Omega)$ and 
$$
\langle A(u),v\rangle=\int_{\Omega}f(x,u)v\diff x,\quad \forall v\in W_0^{s,p}(\Omega).$$
\end{definition}
\noindent	As shown in Lemmas~\ref{(S_+)} and \ref{PS1}, if $f$ satisfies $(\textup{F}),$ then  the above definition is well-defined and solutions of \eqref{existence.sol} are thus critical points of $\Phi.$

	\section{Eigenvalue problems}\label{EPs}
	We consider the eigenvalue problem
	\begin{eqnarray}\label{EP}
	\begin{cases}
	(-\Delta)_p^su=\lambda h(x)|u|^{p-2}u \quad &\text{in } \Omega,\\
	u=0\quad &\text{in } \mathbb{R}^N\setminus \Omega,
	\end{cases}
	\end{eqnarray}
	where $h\in\W_p$ is possibly sign-changing with  $|\{x \in \Omega : h(x) >0\}| >0$, $\lambda$ is a real number.
	\begin{definition}\rm
		We say that $\lambda$ is an \emph{eigenvalue} of $(-\Delta)_p^s$ in $\Omega$ related to the weight $h$ (an eigenvalue, for short) if problem \eqref{EP} has a nontrivial solution $u$ and such a solution $u$ is called an \emph{eigenfunction} corresponding to the eigenvalue $\lambda.$
	\end{definition}

	\subsection{The first eigenpair} In this subsection, we state and complement the properties of the first eigenpair of \eqref{EP} stated in \cite{HPSS}.

	Define
	$$
	\lambda_1 := \inf \left\{\|u\|^p : u \in W_0^{s,p}(\Omega),\ \int_\Omega h(x) |u|^p\, \diff x =1\right\}.
	$$
We have the following.
	
	\begin{theorem} \label{eigenpair1}
		Let $\lambda_1$ be as above. Then, $\lambda_1$ is attained by some positive a.e. $e_1 \in W_0^{s,p}(\Omega)$, $(\lambda_1,e_1)$ is an eigenpair of problem \eqref{EP}, and any two eigenfunctions corresponding to $\lambda_1$ are proportional. Furthermore, $\lambda_1$ is isolated if $h\in\widetilde{\W_p}.$ 
	\end{theorem}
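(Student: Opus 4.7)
The plan is to establish attainment, positivity, simplicity, and (under $\widetilde{\W_p}$) isolation in turn, building on the tools already assembled in the paper.

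\textbf{Attainment and sign.} I would run the direct method: a minimizing sequence $\{u_n\}\subset W_0^{s,p}(\Omega)$ with $\int_\Omega h|u_n|^p\diff x=1$ and $\|u_n\|^p\to\lambda_1$ is bounded, so $u_n\rightharpoonup e_1$ in $W_0^{s,p}(\Omega)$ up to a subsequence. Lemma~\ref{convergence}---whose proof crucially uses the relaxed range $a\in[0,q)$ in Definition~\ref{classB}---gives $\int_\Omega h|e_1|^p\diff x=1$, and weak lower semicontinuity yields $\|e_1\|^p=\lambda_1$. A Lagrange-multiplier argument, justified by Lemma~\ref{PS1} applied to $f(x,t)=h(x)|t|^{p-2}t$, produces the eigenequation, and positivity of $\lambda_1$ is immediate from the bound $|u|_{p,h}\leq C\|u\|$ in Lemma~\ref{norms.est}. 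Since replacing $e_1$ by $|e_1|$ preserves both the Gagliardo seminorm (via $||a|-|b||\leq|a-b|$) and the constraint, I may take $e_1\geq 0$; the strong maximum principle (Theorem~\ref{minimum principle}, valid without any sign condition on $h$) then gives $e_1>0$ a.e.

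\textbf{Simplicity.} For two positive $\lambda_1$-eigenfunctions $u,v$ normalized by $\int h|u|^p=\int h|v|^p=1$, the curve $\sigma_t:=((1-t)u^p+tv^p)^{1/p}$ satisfies $\int h\sigma_t^p=1$, so $\|\sigma_t\|^p\geq\lambda_1$. I would invoke the nonlocal hidden convexity of $t\mapsto\|\sigma_t\|^p$ (the discrete Picone-type inequality of Brasco--Parini/Franzina--Palatucci), which together with equality at $t=0,1$ forces the curve to be affine and yields $u\propto v$.

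\textbf{Isolation.} Assume for contradiction there exist eigenvalues $\lambda_n\to\lambda_1$ with $\lambda_n\neq\lambda_1$ and eigenfunctions $u_n$ with $\int h|u_n|^p\diff x=1$; then $\|u_n\|^p=\lambda_n$ is bounded and $u_n\rightharpoonup u_\infty$. Rewriting
\begin{equation*}
\langle A(u_n),u_n-u_\infty\rangle=\lambda_n\Big(1-\int_\Omega h|u_n|^{p-2}u_n u_\infty\diff x\Big)
\end{equation*}
and applying Lemma~\ref{convergence} gives the right-hand side $\to 0$, so Lemma~\ref{(S_+)} upgrades to $u_n\to u_\infty$ strongly. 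Then $u_\infty$ is a $\lambda_1$-eigenfunction with $\int h|u_\infty|^p=1$, hence $u_\infty=\pm c_0 e_1$ by simplicity, of constant sign a.e. Since $\lambda_n\neq\lambda_1$, $u_n$ cannot be proportional to $e_1$, so $u_n^\pm\not\equiv 0$. Testing \eqref{EP} with $\mp u_n^\mp$ and using the pointwise inequality $|a-b|^{p-2}(a-b)(b^--a^-)\geq|a^--b^-|^p$ yields
\begin{equation*}
\|u_n^\pm\|^p\leq\lambda_n\int_{\Omega_n^\pm} h\,(u_n^\pm)^p\diff x,\qquad \Omega_n^\pm:=\{\pm u_n>0\}.
\end{equation*}
With $h\in\widetilde{\W_p}$ I pick $b<p_s^*$ such that $\tfrac{1}{r}+\tfrac{a}{p}+\tfrac{p-a}{b}=1$ (possible thanks to the strict inequality $\tfrac{1}{r}+\tfrac{a}{p}+\tfrac{p-a}{p_s^*}<1$ with $a\in[0,1]$); applying H\"older, Hardy, and the trivial interpolation $|u_n^\pm|_b\leq|u_n^\pm|_{p_s^*}|\Omega_n^\pm|^{1/b-1/p_s^*}$ gives
\begin{equation*}
\int_{\Omega_n^\pm} h\,(u_n^\pm)^p\diff x\leq C\|u_n^\pm\|^p\,|\Omega_n^\pm|^{\kappa},\qquad \kappa:=(p-a)\Big(\tfrac{1}{b}-\tfrac{1}{p_s^*}\Big)>0.
\end{equation*}
Combining yields $|\Omega_n^\pm|\geq(\lambda_n C)^{-1/\kappa}\geq c_1>0$ uniformly in $n$, contradicting the strong convergence $u_n\to u_\infty=\pm c_0 e_1$, which forces one of $|\Omega_n^+|,|\Omega_n^-|$ to vanish.

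\textbf{Main obstacle.} The delicate ingredient is the uniform nodal-measure bound $|\Omega_n^\pm|\geq c_1$, which is exactly where the refinement from $\W_p$ to $\widetilde{\W_p}$ is essential: the strict inequality in Definition of $\widetilde{\W_p}$ together with $a\in[0,1]$ provides room to pick $b<p_s^*$ in H\"older (while Hardy absorbs the $\rho^{-sa}$ factor), which converts into the positive gain $\kappa>0$. Without this margin the estimate degenerates and sign-changing eigenfunctions of nearby eigenvalues could concentrate; all other conclusions (attainment, positivity, simplicity) already hold in the larger class $\W_p$.
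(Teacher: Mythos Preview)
Your argument for attainment, positivity, and simplicity matches the paper's: direct method via Lemma~\ref{convergence}, Theorem~\ref{minimum principle} for strict positivity, and the Picone/hidden-convexity route (the paper simply cites \cite[Theorem 4.2]{Franzina}) for simplicity.

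For isolation you take a genuinely different route. The paper deduces isolation as a byproduct of Theorem~\ref{2ndE}: it builds the second eigenvalue $\lambda_2$ by a minimax over odd maps $\mathbb{S}^1\to S_p(h,\Omega)$, proves $\lambda_2>\lambda_1$, and shows every eigenvalue beyond $\lambda_1$ satisfies $\lambda\geq\lambda_2$. Your direct argument---strong convergence of normalized eigenfunctions via $(\mathcal{S}_+)$, then a uniform lower bound on $|\Omega_n^\pm|$ contradicting convergence to a constant-sign limit---is a correct and more economical alternative if one only wants isolation; the paper's route costs more but simultaneously delivers the variational characterization of $\lambda_2$.

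One correction to your ``main obstacle'' paragraph: the nodal-measure estimate $|\Omega_n^\pm|\geq c_1$ already works under the larger class $\W_p$. Any $a\in[0,p)$ with $\frac{1}{r}+\frac{a}{p}+\frac{p-a}{p_s^\ast}<1$ leaves room to choose $b<p_s^\ast$ and produce the gain $\kappa=(p-a)(\tfrac{1}{b}-\tfrac{1}{p_s^\ast})>0$; the restriction $a\in[0,1]$ plays no role there. Where $\widetilde{\W_p}$ is genuinely required---both in the paper and implicitly in your proof---is the step ``$\lambda_n\ne\lambda_1\Rightarrow u_n$ sign-changing'': this is Corollary~\ref{Cor.sign-changing}, resting on Theorem~\ref{sign-changing}, whose proof tests with $e_1^p/(w+\epsilon)^{p-1}$ and therefore needs $e_1\in L^\infty(\Omega)$, which is supplied by the a-priori bound of Theorem~\ref{Theo.A-priori bounds} under $\widetilde{\W_p}$. (Your phrasing ``$u_n$ not proportional to $e_1$, so $u_n^\pm\not\equiv 0$'' is a detour; the clean step is $\lambda_n>\lambda_1$ from the variational characterization of $\lambda_1$, then invoke Corollary~\ref{Cor.sign-changing}.)
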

	 The existence of the first eigenpair $(\lambda_1,e_1)$ as well as the nonnegativeness of $e_1$ are shown in \cite[Theorem 3.1]{HPSS} under condition $\C_p.$ However, we can get the same results under $\W_p$ with Lemma \ref{convergence}. The positivity of $e_1$  and the simplicity of $\lambda_1$ follows from the following maximum principle and \cite[Theorem 4.2]{Franzina} while the isolation follows from the well-definedness of the second eigenvalue in Theorem~\ref{2ndE}.
	
	\begin{theorem}\label{minimum principle}
		Suppose that $V\in L_{\loc}^1(\Omega)$. 
		If a nontrivial nonnegative function $u\in W_0^{s,p}(\Omega)$ satisfies $Vu^{p-1}\in L_{\loc}^1(\Omega)$ and for all $v\in W_0^{s,p}(\Omega)\cap L^\infty_{\loc}(\Omega)$ with $v\geq 0,$  
		$$\int_{\mathbb{R}^{2N}}\frac{|u(x)-u(y)|^{p-2}(u(x)-u(y))(v(x)-v(y))}{|x-y|^{N+sp}}\diff x \diff y+\int_\Omega Vu^{p-1}v\diff x\geq 0,
		$$
		then $u>0$ a.e. in $\Omega.$ In particular, a nonnegative eigenfunction of \eqref{EP} must be positive a.e. in $\Omega.$
	\end{theorem}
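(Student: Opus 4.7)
I argue by contradiction. Assume $Z:=\{x\in\Omega:u(x)=0\}$ has positive Lebesgue measure; then $P:=\{u>0\}\cap\Omega$ also has positive measure since $u\not\equiv 0$. Fix $\eta\in C_c^\infty(\Omega)$, $\eta\geq 0$, with $\int_\Omega\eta\,\mathbf{1}_Z\,\diff x>0$ (possible by taking a bump at a Lebesgue density point of $Z$), and use as test function
$$v_\epsilon:=\eta\,(\epsilon-u)_+,\qquad\epsilon>0.$$
Since $(\epsilon-u)_+$ is a bounded $W^{s,p}$-truncation of $u$ and $\eta\in C_c^\infty(\Omega)$, the product satisfies $v_\epsilon\in W_0^{s,p}(\Omega)\cap L^\infty_{\loc}(\Omega)$ and $v_\epsilon\geq 0$, so $v_\epsilon$ is admissible in the hypothesis. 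The strategy is to plug it in, divide by $\epsilon$, and send $\epsilon\downarrow 0$.

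\textbf{The two limits.} For the local term, the bound $0\leq v_\epsilon/\epsilon\leq\eta$ together with the pointwise limit $v_\epsilon/\epsilon\to\eta\mathbf{1}_Z$ and the fact that $u^{p-1}\equiv 0$ on $Z$ produce, via dominated convergence (majorant $|V|u^{p-1}\eta\in L^1(\Omega)$ using $Vu^{p-1}\in L^1_{\loc}$ and $\operatorname{supp}\eta\Subset\Omega$), that $\frac{1}{\epsilon}\int_\Omega Vu^{p-1}v_\epsilon\,\diff x\to 0$. For the nonlocal term the decisive observation is that $t\mapsto(\epsilon-t)_+$ is nonincreasing, so the bilinear integrand is nonpositive up to a lower-order correction arising from $\nabla\eta$. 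Isolating the dominant piece supported on $Z\times\{u\geq\epsilon\}$ (and its symmetric copy), where the rescaled integrand is exactly $-\eta(x)u(y)^{p-1}|x-y|^{-N-sp}$, and applying monotone convergence as $\epsilon\downarrow 0$, yields a limit bounded above by
$$-2\int_Z\int_P\frac{\eta(x)\,u(y)^{p-1}}{|x-y|^{N+sp}}\,\diff y\,\diff x<0,$$
which is strictly negative by the choice of $\eta$ and $|P|>0$. The remaining contributions (transition region $\{0<u<\epsilon\}$ and the $\nabla\eta$-cross term) are controlled by the Lipschitz estimate $|(\epsilon-a)_+-(\epsilon-b)_+|\leq|a-b|$ combined with the $W^{s,p}$-finiteness of the Gagliardo energy of $u$, and vanish by dominated convergence.

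\textbf{Conclusion and main obstacle.} Combining the two limits, the left-hand side of the hypothesis---after division by $\epsilon$---tends to a strictly negative number, contradicting $\geq 0$. Hence $|Z|=0$, so $u>0$ a.e.\ in $\Omega$. The eigenfunction statement follows by applying the principle with $V:=-\lambda h$; on each compact $K\Subset\Omega$, $\rho$ is bounded below, so $h\in\W_p$ forces $h\in L^r(K)$ and Lemma~\ref{norms.est} (equivalently, H\"older's inequality) yields $hu^{p-1}\in L^1_{\loc}(\Omega)$. The main obstacle is to produce an $\epsilon$-uniform integrable majorant for the full rescaled nonlocal integrand across the transition set $\{0<u<\epsilon\}$ and the $\nabla\eta$-cross term, so that the sign argument promotes to a genuine strictly negative limit rather than merely a nonpositive one; this is where the symmetrization of the kernel and the exact Lipschitzness of $(\epsilon-\cdot)_+$ are essential.
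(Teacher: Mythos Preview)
Your approach is quite different from the paper's. The paper does not test directly with $\eta(\epsilon-u)_+$; it invokes the logarithmic estimate of Lemma~\ref{DKP} (obtained by testing with $\phi^p/(u+\delta)^{p-1}$) and then follows \cite[Theorem~A.1]{BF}: a fractional Poincar\'e inequality combined with the log bound forces $\log(u+\delta)$ to have controlled Gagliardo seminorm on balls, and sending $\delta\to 0$ shows $u$ cannot vanish on a subset of positive measure inside any ball without vanishing a.e.\ there. Your route is more direct and exploits the genuinely nonlocal interaction between $Z$ and $P$ through the kernel, bypassing the log machinery.

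There is, however, an error in your handling of the $\nabla\eta$-cross term: it does \emph{not} vanish. Writing $v_\epsilon(x)-v_\epsilon(y)=\eta(x)\big[(\epsilon-u(x))_+-(\epsilon-u(y))_+\big]+(\eta(x)-\eta(y))(\epsilon-u(y))_+$, the second summand divided by $\epsilon$ is dominated by the integrable majorant $|u(x)-u(y)|^{p-1}|\eta(x)-\eta(y)||x-y|^{-N-sp}$, but its pointwise limit is $u(x)^{p-1}(\eta(x)-\eta(y))\mathbf{1}_{\{u(y)=0\}}|x-y|^{-N-sp}$, which integrates to a finite quantity $L_2$ that is in general nonzero. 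The contradiction nonetheless survives, for a reason you did not state: the $\eta(x)$-piece restricted to $(Z'\times P)\cup(P\times Z')$ (with $Z':=\{u=0\}\subset\mathbb{R}^N$) has nonpositive integrand monotone in $\epsilon$ and, by monotone convergence, tends to $-\int_{Z'}\int_P(\eta(x)+\eta(y))u(y)^{p-1}|x-y|^{-N-sp}\,\diff y\,\diff x$; adding $L_2$ gives precisely your claimed $-2\int_{Z'}\int_P\eta(x)u(y)^{p-1}|x-y|^{-N-sp}\,\diff y\,\diff x<0$. The remaining $P\times P$ part of the $\eta(x)$-piece is $\le 0$ and needs no limit analysis---it only helps. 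So the scheme closes once ``vanish'' is replaced by ``converge to a finite limit that combines with the dominant piece to yield the asserted value''.
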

	To prove Theorem~\ref{minimum principle}, we need the following lemma, which is a slight modification of \cite[Lemma 1.3]{Di Castro-Kuusi-Palatucci}. In the sequel, denote $u_+=\max\{u,0\}, u_-=\max\{-u,0\}.$
	\begin{lemma}\label{DKP}
		Suppose that  $V\in L_{\loc}^1(B_R)$. Let $u\in W^{s,p}(\mathbb{R}^N)$ be such that $u\geq 0$ a.e. in $B_R\equiv B(x_0,R)$, $Vu^{p-1}\in L_{\loc}^1(B_R)$ and  for all $v\in W_0^{s,p}(B_R)\cap L^\infty_{\loc}(B_R)$ with $v\geq 0,$ 
		$$\int_{\mathbb{R}^{2N}}\frac{|u(x)-u(y)|^{p-2}(u(x)-u(y))(v(x)-v(y))}{|x-y|^{N+sp}}\diff x \diff y+\int_{B_R} Vu^{p-1}v\diff x\geq 0.
		$$
		Then, the following estimate holds for any $B_r\equiv B(x_0,r)\subset B(x_0,R/2)$ and any $\delta>0$,
		
		\begin{align*}
		\int_{B_r}\int_{B_r}& \left|\log\left(\frac{u(x)+\delta}{u(y)+\delta}\right)\right|^p\frac{1}{|x-y|^{N+sp}}\diff x\diff y\\
		&\leq Cr^{N-sp}\left\{\delta^{1-p}\big(\frac{r}{R}\big)^{sp}[\operatorname{Tail}\ (u_-;x_0,R)]^{p-1}+1\right\}+C|V|_{L^1(B_{3r/2})},
		\end{align*}
		where 
		$$\operatorname{Tail}\ (u_{-};x_0,R):=\left[R^{sp}\int_{\mathbb{R}^N\setminus B(x_0,R)}|u_-(x)|^{p-1}|x-x_0|^{-(N+sp)}\diff x\right]^{\frac{1}{p-1}}$$ and $C$ depends only on $N,p$ and $s$.
	\end{lemma}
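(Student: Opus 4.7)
The plan is to adapt the proof of \cite[Lemma 1.3]{Di Castro-Kuusi-Palatucci} by carrying the potential term $Vu^{p-1}$ through the same test-function argument. First I would pick $\eta\in C_c^\infty(B_{3r/2})$ with $0\le\eta\le 1$, $\eta\equiv 1$ on $B_r$, and $|\nabla\eta|\le C/r$; from $B_r\subset B_{R/2}$ we get $\operatorname{supp}\eta\subset B_{3R/4}\Subset B_R$. The crucial ingredient, as in \cite{Di Castro-Kuusi-Palatucci}, is to test the weak inequality against
$$v:=(u+\delta)^{1-p}\eta^p.$$
Because $u\ge 0$ a.e.\ in $B_R$ and $\delta>0$, we have $0\le v\le \delta^{1-p}\eta^p$, and since $\tau\mapsto(\tau+\delta)^{1-p}$ is Lipschitz on $[0,\infty)$ while $\eta\in C_c^\infty(B_R)$, a standard computation gives $v\in W_0^{s,p}(B_R)\cap L^\infty_{\loc}(B_R)$. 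Hence $v$ is admissible and the hypothesis of the lemma yields
$$\int_{\mathbb{R}^{2N}}\frac{|u(x)-u(y)|^{p-2}(u(x)-u(y))(v(x)-v(y))}{|x-y|^{N+sp}}\diff x\diff y\ge -\int_{B_R}Vu^{p-1}v\diff x.$$

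For the fractional term on the left I would reproduce the pointwise algebraic estimates from \cite[proof of Lemma 1.3]{Di Castro-Kuusi-Palatucci}: splitting $\mathbb{R}^{2N}$ into $B_{3r/2}\times B_{3r/2}$ and its complement, on the diagonal part their inequality generates the logarithmic quantity on $B_r\times B_r$ together with an error of order $r^{N-sp}$ coming from $\nabla\eta$, while on the non-local part the tail of $u_-$ outside $B_R$ is controlled through the explicit form of $\eta$ and the positivity of $u$ in $B_R$. The outcome is
\begin{align*}
\int_{\mathbb{R}^{2N}}\frac{|u(x)-u(y)|^{p-2}(u(x)-u(y))(v(x)-v(y))}{|x-y|^{N+sp}}\diff x\diff y
&\le -c\int_{B_r}\int_{B_r}\left|\log\frac{u(x)+\delta}{u(y)+\delta}\right|^p\frac{\diff x\diff y}{|x-y|^{N+sp}}\\
&\quad +Cr^{N-sp}\Bigl\{1+\delta^{1-p}\bigl(\tfrac{r}{R}\bigr)^{sp}[\operatorname{Tail}(u_-;x_0,R)]^{p-1}\Bigr\}.
\end{align*}
These manipulations depend only on the non-negativity of $u$ in $B_R$ and on the geometric features of $\eta$, so the presence of $V$ does not intrude on this step.

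It remains to absorb the potential contribution. Since $u^{p-1}(u+\delta)^{1-p}=(u/(u+\delta))^{p-1}\in[0,1]$ and $0\le\eta^p\le 1$ with $\operatorname{supp}\eta\subset B_{3r/2}$,
$$\left|\int_{B_R}Vu^{p-1}v\diff x\right|\le\int_{B_{3r/2}}|V|\diff x=|V|_{L^1(B_{3r/2})}.$$
Combining this bound with the two preceding displayed inequalities and rearranging produces precisely the claimed estimate. The main obstacle is the delicate pointwise algebraic step underlying the fractional part, but this is exactly the content of \cite[Lemma 1.3]{Di Castro-Kuusi-Palatucci} and can be imported unchanged; the only new ingredient is the uniform bound $u^{p-1}v\le 1$, which, together with $V\in L^1_{\loc}$, reduces the new potential contribution to the harmless term $|V|_{L^1(B_{3r/2})}$.
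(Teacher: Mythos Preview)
Your proposal is correct and follows essentially the same route as the paper: the paper also tests with $v=\phi^p/(u+\delta)^{p-1}$ for a cut-off $\phi\in C_c^\infty(B_{3r/2})$ with $0\le\phi\le1$ and $\phi\equiv1$ on $B_r$, imports the fractional estimates verbatim from \cite[Proof of Lemma~1.3]{Di Castro-Kuusi-Palatucci}, and handles the potential term via the identical bound $\int_{B_R}Vu^{p-1}v\,\diff x=\int_{B_{3r/2}}V\bigl(\tfrac{u}{u+\delta}\bigr)^{p-1}\phi^p\,\diff x\le|V|_{L^1(B_{3r/2})}$. Your write-up is in fact slightly more detailed than the paper's in justifying the admissibility of $v$ and sketching the structure of the borrowed argument.
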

	The proof of Lemma~\ref{DKP} is exactly \cite[Proof of Lemma 1.3]{Di Castro-Kuusi-Palatucci} by using the test function $v=\frac{\phi^p}{(u+\delta)^{p-1}}$, where $\phi\in C_c^\infty(B_{3r/2})$ such that $0\leq \phi\leq 1$ and $\phi\equiv 1$ in $B_r$, and additionally using the simple estimate $$\int_{B_R} Vu^{p-1}v\diff x=\int_{B_{3r/2}}V\left(\frac{u}{u+\delta}\right)^{p-1}\phi^p\diff x\leq  |V|_{L^1(B_{3r/2})}.$$
	
	\begin{proof}[Proof of Theorem~\ref{minimum principle}] 	
		The proof is exactly  \cite[Proof of Theorem A.1]{BF}, where we apply Lemma~\ref{DKP} instead of \cite[Lemma 1.3]{Di Castro-Kuusi-Palatucci}. Finally, let $\varphi$ be any nonnegative eigenfunction of problem \eqref{EP}. Since $h\in \W_p$, $h\in L^1_{\loc}(\Omega)$ and $h\varphi^{p-1}\in L_{\loc}^1(\Omega)$. Applying the result we have just obtained with $V=-\lambda h,$ we get that $\varphi>0$ a.e. in $\Omega.$

	\end{proof}
	
	To obtain further properties of the first eigenpair, we need the boundedness of eigenfuntions, that requires more conditions on the weights. The next theorem is a special case of Theorem~\ref{Theo.A-priori bounds} and Corollary~\ref{Cor.continuity},  which are obtained for a more general nonlinear term $f$.
\begin{theorem}
Assume that $h\in \widetilde{\W_p}$. Let $u$ be an eigenfunction of the eigenvalue problem \eqref{EP}. Then, $u\in L^\infty(\Omega)$. Furthermore, $u$ is continuous if $h\in \A_p$ and $u$ is locally H\"older continuous if $h\in \A_p$ with $p\geq 2.$
\end{theorem}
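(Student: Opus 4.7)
The plan is to recast the eigenvalue equation \eqref{EP} as a special case of the general semilinear problem \eqref{existence.sol}, and then appeal directly to the forthcoming Theorem \ref{Theo.A-priori bounds} and Corollary \ref{Cor.continuity}. Setting $f(x,t) := \lambda h(x) |t|^{p-2} t$, an eigenfunction $u$ of \eqref{EP} is precisely a weak solution of \eqref{existence.sol} with this choice of $f$. The growth estimate $|f(x,t)| \leq |\lambda|\, |h(x)|\, |t|^{p-1}$ is of the form required in condition $(\textup{F})$, with $m=1$, $q_1=p$, and $h_1(x) := |\lambda|\, |h(x)|$.

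First, I would check that $(\textup{F})$ is satisfied with $h_1 \in \widetilde{\W_p}$. Since $h\in \widetilde{\W_p}$ means $h \rho^{sa}\in L^r(\Omega)$ for suitable $a$ and $r$, the same is true of $|\lambda|\,|h|$, because $\bigl||\lambda|\,|h|\,\rho^{sa}\bigr|_r = |\lambda|\,|h\rho^{sa}|_r$. The fact that $h$ may be sign-changing is harmless: we only require a nonnegative dominating function in $(\textup{F})$, and $|h|$ plays that role. Applying Theorem \ref{Theo.A-priori bounds} to $u$ as a solution of \eqref{existence.sol} then yields $u\in L^\infty(\Omega)$, which is the first assertion.

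Next, for the continuity and Hölder regularity claims, I would upgrade the weight assumption from $\widetilde{\W_p}$ to $\A_p$. Recall that $h \in \A_p$ means $h \in L^r(\Omega)$ for some $r>1$ with $\frac{1}{r}+\frac{p}{p_s^\ast}<1$, which is exactly the extra integrability hypothesis under which Corollary \ref{Cor.continuity} produces continuity of bounded solutions, and, in the case $p\geq 2$, upgrades the conclusion to local Hölder continuity. Since the eigenfunction $u$ is already known to be bounded by the previous step, we may apply the corollary directly to $f(x,t)=\lambda h(x)|t|^{p-2}t$ to finish.

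No serious obstacle arises: the proof is essentially a verification that the eigenvalue problem fits the framework of the general results. The only small subtlety is tracking classes of weights: one must observe that the passage from $h$ to $|\lambda|\,|h|$ preserves membership in $\W_p$, $\widetilde{\W_p}$, and $\A_p$ respectively, and that the growth $|t|^{p-1}$ in $f$ matches the exponent $q_1 = p$ built into condition $(\textup{F})$. Once this bookkeeping is in place, Theorem \ref{Theo.A-priori bounds} and Corollary \ref{Cor.continuity} supply the boundedness and the regularity of $u$ without further work.
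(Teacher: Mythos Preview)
Your proposal is correct and follows exactly the paper's approach, which merely notes that the theorem is a special case of Theorem~\ref{Theo.A-priori bounds} and Corollary~\ref{Cor.continuity}. One cosmetic slip: Theorem~\ref{Theo.A-priori bounds} assumes condition $(\textup{F1})$ rather than $(\textup{F})$, but since you correctly verify $h_1=|\lambda|\,|h|\in\widetilde{\W_p}$ (which is precisely what $(\textup{F1})$ demands for $q_1=p$), the argument is unaffected.
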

	
	The positivity of associated eigenfunctions is a characterization of the first eigenvalue, as shown in the next theorem.
	
	\begin{theorem}\label{sign-changing}
		Assume that $h\in \widetilde{\W_p}$. Let $v$ be an eigenfunction of \eqref{EP} such that $v>0$ a.e. in $\Omega$ and $\int_{\Omega}h(x)v^p(x)\diff x>0.$ Then, $\lambda=\lambda_1.$
	\end{theorem}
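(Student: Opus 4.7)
The plan is to show the two inequalities $\lambda_1\le\lambda$ and $\lambda\le\lambda_1$ separately. The first is essentially the definition of $\lambda_1$; the second rests on a discrete Picone-type inequality for the fractional $p$-Laplacian (as in the paper of Franzina already cited for simplicity), applied to $e_1$ and $v$. Throughout, $\lambda$ denotes the eigenvalue associated with $v$, and the boundedness of $v$ and $e_1$ supplied by the regularity theorem (and the hypothesis $h\in\widetilde{\W_p}$) is what makes the various test functions admissible.

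For the easy direction, I would first test the weak form of \eqref{EP} for $v$ against $v$ itself to obtain $\|v\|^p=\lambda\int_\Omega h v^p\,\diff x$; since $\int_\Omega h v^p\,\diff x>0$, this yields $\lambda>0$ and $\lambda=\|v\|^p/\int_\Omega h v^p\,\diff x$. After rescaling so that $\int_\Omega h v^p\,\diff x=1$, the function $v$ becomes a competitor in the infimum defining $\lambda_1$, and hence $\lambda_1\le\lambda$.

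For the reverse inequality, the idea is to use $\varphi_\varepsilon:=e_1^p/(v+\varepsilon)^{p-1}$ as a test function in the weak formulation for $v$. By the regularity result quoted just before the statement, both $e_1$ and $v$ lie in $L^\infty(\Omega)$; combined with $e_1,v\ge 0$ and $e_1\in W_0^{s,p}(\Omega)$, one checks that $\varphi_\varepsilon\in W_0^{s,p}(\Omega)\cap L^\infty(\Omega)$ (the usual chain-rule-type calculation for fractional Sobolev spaces). This gives
\[
\int_{\mathbb{R}^{2N}}\frac{|v(x)-v(y)|^{p-2}(v(x)-v(y))(\varphi_\varepsilon(x)-\varphi_\varepsilon(y))}{|x-y|^{N+sp}}\diff x\diff y=\lambda\int_\Omega h\,\frac{v^{p-1}}{(v+\varepsilon)^{p-1}}\,e_1^p\,\diff x.
\]
Now apply the discrete fractional Picone inequality (with $u=e_1\ge 0$ and $v$ replaced by $v+\varepsilon>0$, observing that the differences of $v+\varepsilon$ equal those of $v$): the integrand on the left is pointwise dominated by $|e_1(x)-e_1(y)|^p/|x-y|^{N+sp}$, so the left side is $\le\|e_1\|^p=\lambda_1\int_\Omega h e_1^p\,\diff x=\lambda_1$.

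It then remains to let $\varepsilon\downarrow 0$. Because $v>0$ a.e., $v^{p-1}/(v+\varepsilon)^{p-1}\to 1$ a.e.\ in $\Omega$ and is bounded by $1$, while Lemma~\ref{norms.est} ensures $|h|e_1^p\in L^1(\Omega)$, so dominated convergence yields $\lambda\int_\Omega h e_1^p\,\diff x\le\lambda_1$. Since $\int_\Omega h e_1^p\,\diff x=1$, this gives $\lambda\le\lambda_1$, closing the proof. The main difficulty I anticipate is rigorously justifying $\varphi_\varepsilon\in W_0^{s,p}(\Omega)$ and the use of the discrete Picone inequality at the fractional level (signs of the differences of $v$ and $\varphi_\varepsilon$ must be handled carefully); this is precisely where the $L^\infty$-regularity from $\widetilde{\W_p}$, together with the positivity of $v$ a.e., is indispensable.
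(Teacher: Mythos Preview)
Your proposal is correct and follows essentially the same approach as the paper: test the equation for $v$ with $e_1^p/(v+\varepsilon)^{p-1}$, apply the discrete Picone inequality (the paper cites \cite[Proposition~4.2]{BF}), and pass to the limit by dominated convergence to obtain $\lambda\le\lambda_1$, while $\lambda_1\le\lambda$ is immediate from the variational characterization. One minor difference is that the paper only invokes the boundedness of $e_1$ (not of $v$) to justify admissibility of the test function, since $v+\varepsilon\ge\varepsilon$ already controls the denominator.
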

	\begin{proof} Set $w:=\frac{v}{\left(\int_{\Omega}h(x)v^p(x)\diff x\right)^{1/p}}$. Then $w>0$ satisfies $\int_{\Omega}h(x)w^p(x)\diff x=1$ and
		\begin{equation}\label{eq.w}
		\int_{\mathbb{R}^{2N}}\frac{|w(x)-w(y)|^{p-2}(w(x)-w(y))(\xi(x)-\xi(y))}{|x-y|^{N+sp}}\diff x \diff y=\lambda\int_{\Omega}h(x)w^{p-1}(x)\xi(x)\diff x
		\end{equation}
		for all $\xi\in W_0^{s,p}(\Omega).$ Let $e_1$ be the first eigenfunction of \eqref{EP}. For each $\epsilon>0$,\ $\frac{e_1^p}{(w+\epsilon)^{p-1}}\in W_0^{s,p}(\Omega)$ due to the boundedness of $e_1.$ By taking $\xi=\frac{e_1^p}{(w+\epsilon)^{p-1}}$ in \eqref{eq.w} we get 
		\begin{align}
		\int_{\mathbb{R}^{2N}}|w(x)-w(y)|^{p-2}(w(x)-w(y))&\left|\frac{e_1^p(x)}{(w(x)+\epsilon)^{p-1}}-\frac{e_1^p(y)}{(w(y)+\epsilon)^{p-1}}\right|\frac{\diff x \diff y}{|x-y|^{N+sp}}\notag\\
		&=\lambda\int_{\Omega}h(x)\left(\frac{w(x)}{w(x)+\epsilon}\right)^{p-1}e_1^p(x)\diff x. \label{eq.w2}
		\end{align}   
		Applying \cite[Proposition 4.2]{BF} for $u=w+\epsilon$, $v=e_1$, and $p=q$ we have 
		$$|w(x)-w(y)|^{p-2}(w(x)-w(y))\left|\frac{e_1^p(x)}{(w(x)+\epsilon)^{p-1}}-\frac{e_1^p(y)}{(w(y)+\epsilon)^{p-1}}\right|\leq |e_1(x)-e_1(y)|^p$$
		for a.e. $x\in \Omega$ and a.e. $y\in\Omega.$ Combining this with \eqref{eq.w2}, we obtain
		\begin{equation*}
		\|e_1\|^p\geq \lambda\int_{\Omega}h(x)\left(\frac{w(x)}{w(x)+\epsilon}\right)^{p-1}e_1^p(x)\diff x.
		\end{equation*}
		By letting $\epsilon\to 0^+$ in the last inequality, and invoking the Lebesgue dominated convergence theorem, we obtain
		\begin{equation*}
		\|e_1\|^p\geq \lambda\int_{\Omega}h(x)e_1^p(x)\diff x.
		\end{equation*}
		That is,
		$$\lambda_1\geq \lambda.$$
		Recalling the definition of $\lambda_1$, we obtain the desired conclusion from the last inequality.
	\end{proof}
	\begin{corollary}\label{Cor.sign-changing}
		Assume that $h\in \widetilde{\W_p}$. If $u$ is an eigenfunction of \eqref{EP} associated with an eigenvalue $\lambda>\lambda_1$, then $u$ must be sign-changing.
	\end{corollary}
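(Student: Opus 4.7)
The plan is to argue by contradiction: suppose $u$ is an eigenfunction for some $\lambda>\lambda_1$ that does \emph{not} change sign. After possibly replacing $u$ by $-u$, I may assume $u\ge 0$ a.e.\ in $\Omega$, and $u\not\equiv 0$ since $u$ is an eigenfunction. The goal is then to deduce $\lambda=\lambda_1$, which contradicts the assumption $\lambda>\lambda_1$.

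First I would upgrade the nonnegativity to strict positivity by invoking the strong maximum principle, Theorem \ref{minimum principle}, applied with $V=-\lambda h$. Since $h\in \widetilde{\W_p}\subset \W_p$, one has $h\in L^1_{\loc}(\Omega)$ (this is implicit from the definition of the class together with the Hardy/H\"older estimates used throughout the paper), so $V\in L^1_{\loc}(\Omega)$, and similarly $h u^{p-1}\in L^1_{\loc}(\Omega)$ since $u\in W_0^{s,p}(\Omega)$ and Lemma~\ref{norms.est} provides the required local integrability. The weak eigenvalue identity for $u$ gives, for every nonnegative test function $v\in W_0^{s,p}(\Omega)\cap L^\infty_{\loc}(\Omega)$,
\[
\int_{\mathbb{R}^{2N}}\frac{|u(x)-u(y)|^{p-2}(u(x)-u(y))(v(x)-v(y))}{|x-y|^{N+sp}}\diff x\diff y+\int_\Omega Vu^{p-1}v\diff x=0,
\]
so Theorem~\ref{minimum principle} yields $u>0$ a.e.\ in $\Omega$.

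Next I would verify the remaining hypothesis of Theorem~\ref{sign-changing}, namely $\int_\Omega h(x)u^p\diff x>0$. Testing the eigenvalue equation against $u$ itself gives
\[
\|u\|^p=\lambda\int_\Omega h(x)|u|^p\diff x.
\]
Since $u\not\equiv 0$, the left-hand side is strictly positive, and since $\lambda>\lambda_1>0$, it follows that $\int_\Omega h(x)u^p\diff x>0$.

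With these two facts in hand, Theorem~\ref{sign-changing} applies directly to $v:=u$ and forces $\lambda=\lambda_1$, contradicting $\lambda>\lambda_1$. Hence the assumption that $u$ has constant sign is untenable, and $u$ must be sign-changing. The only subtle point in this argument is justifying the use of Theorem~\ref{minimum principle} under the weight class $\widetilde{\W_p}$ (in particular, the local integrability of $h$ and of $hu^{p-1}$); everything else is a clean chain of applications of results already established in the excerpt.
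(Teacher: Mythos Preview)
Your proof is correct and follows the same approach as the paper's: assume $u$ has constant sign, upgrade to strict positivity via Theorem~\ref{minimum principle}, test the equation against $u$ to deduce $\int_\Omega h u^p\diff x>0$, and then invoke Theorem~\ref{sign-changing} to force $\lambda=\lambda_1$. The paper is simply more terse, writing ``we may assume $u>0$ a.e.'' by tacitly using the last sentence of Theorem~\ref{minimum principle} (which already records that $h\in L^1_{\loc}(\Omega)$ and $hu^{p-1}\in L^1_{\loc}(\Omega)$ for eigenfunctions), so your explicit verification of those hypotheses is not strictly needed.
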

	\begin{proof} Let $u$ be an eigenfunction associated with an eigenvalue $\lambda>\lambda_1$. Suppose for a contradiction that $u$ is not sign-changing. We may assume that $u>0$ a.e. in $\Omega.$ Then, we get
		$$\|u\|^p=\lambda\int_\Omega h(x)u^p(x)\diff x.$$
		Since $\lambda>\lambda_1$ then $\int_\Omega h(x)u^p\diff x>0$ and hence by Theorem~\ref{sign-changing}, $\lambda=\lambda_1$, a contradiction. The proof is complete.	
	\end{proof}

	\subsection{The second eigenvalue} In this subsection, we show that the second eigenvalue of \eqref{EP} is well-defined and give a formula to determine it. The second eigenvalue of the eigenvalue problem \eqref{EP} when $h\equiv 1$ was studied in \cite{BP} and it is plainly extended to the case $h\in\widetilde{\W_p}.$ For the reader's convenience, we sketch the proof to show how we deal with the presence of a possibly sign-changing weight $h$.  
	
	\vskip4pt
	\noindent Define
	\begin{equation}\label{def.lambda2}
	\lambda_2:=\underset{f\in\mathcal{C}_1}{\inf}\  \underset{u\in \operatorname{Im}(f)}{\max}\ \|u\|^p, 
	\end{equation}
	where 
	$$\mathcal{C}_1:=\{f:\mathbb{S}^1\to S_p(h,\Omega),\ f\ \text{is odd and continuous}\},$$
	$$S_p(h,\Omega):=\left\{u\in W_0^{s,p}(\Omega):\ \int_\Omega h(x)|u|^p\diff x=1\right\}.$$
	The next theorem shows that $\lambda_2$ is exactly the second eigenvalue of \eqref{EP}.
	\begin{theorem}\label{2ndE}
	Let $h\in\W_p$ such that  $|\{x \in \Omega : h(x) >0\}| >0$	and let $\lambda_2$ be defined as in \eqref{def.lambda2}. Then, $\lambda_2$ is an eigenvalue of \eqref{EP} and $\lambda_2>\lambda_1.$ Moreover, if $h\in\widetilde{\W_p},$ then for any eigenvalue $\lambda>\lambda_1$ of \eqref{EP}, we have $\lambda\geq \lambda_2,$ and hence, $\lambda_1$ is isolated.
	\end{theorem}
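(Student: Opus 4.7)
The plan is to mirror, in the weighted setting, the mountain-pass-type characterization of the second eigenvalue developed by Brasco--Parini for $h\equiv 1$, splitting the proof into three pieces: (a) $\lambda_2$ is attained and is indeed an eigenvalue, (b) $\lambda_2>\lambda_1$, and (c) under the stronger assumption $h\in\widetilde{\W_p}$, there are no eigenvalues of \eqref{EP} strictly between $\lambda_1$ and $\lambda_2$, which immediately yields the isolation of $\lambda_1$.

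For (a), let $J(u):=\|u\|^p$ considered as a $C^1$ functional on the $C^1$-manifold $S_p(h,\Omega)$ (well-defined thanks to Lemma~\ref{norms.est} and Lemma~\ref{convergence}, since the constraint map $u\mapsto\int_\Omega h|u|^p$ has nowhere vanishing derivative on $S_p(h,\Omega)$). Critical points of $J|_{S_p(h,\Omega)}$ are exactly the eigenfunctions of \eqref{EP}, and the Lagrange multiplier is the corresponding eigenvalue. I would check the Palais--Smale condition for $J|_{S_p(h,\Omega)}$: a PS sequence at level $c>0$ is bounded (its norm raised to the $p$-th power equals $c+o(1)$), hence, up to a subsequence, $u_n\rightharpoonup u$ with $\int h|u|^p=1$ by Lemma~\ref{convergence}, and the Lagrange multiplier rule combined with the $({\mathcal S}_+)$-property of $A$ in Lemma~\ref{(S_+)} forces $u_n\to u$ strongly. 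Then the class $\mathcal{C}_1$ is nonempty (e.g., $\theta\mapsto(\cos\theta)e_1/|e_1|_{p,h^+} - (\sin\theta)\varphi/|\varphi|_{p,h^+}$ with $\varphi$ any function supported on $\{h>0\}$ and linearly independent of $e_1$, renormalized) and stable under the odd equivariant pseudo-gradient flow associated to $J|_{S_p(h,\Omega)}$, so the standard equivariant deformation lemma yields a critical point at level $\lambda_2$.

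For (b), if $\lambda_2=\lambda_1$ then there exists an odd continuous $f:\mathbb{S}^1\to S_p(h,\Omega)$ with $\max_{u\in\operatorname{Im}(f)}\|u\|^p$ arbitrarily close to $\lambda_1$. Applying Ekeland's principle and the PS condition above, the image of such an $f$ must accumulate, in $W_0^{s,p}(\Omega)$, on the set $\{\pm e_1\}$ of $\lambda_1$-normalized eigenfunctions (this uses simplicity of $\lambda_1$, already available from Theorem~\ref{eigenpair1}); a standard separation/connectedness argument applied to the odd image of the connected $\mathbb{S}^1$ contradicts disconnectedness of $\{\pm e_1\}$, so $\lambda_2>\lambda_1$. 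For (c), assume $h\in\widetilde{\W_p}$ and that $\lambda\in(\lambda_1,\lambda_2)$ is an eigenvalue with eigenfunction $u$; by Corollary~\ref{Cor.sign-changing} both $u_+$ and $u_-$ are nontrivial. I would then build an explicit admissible $f\in\mathcal{C}_1$ whose image lies in $\{v:\|v\|^p\le\lambda\}$, namely the normalized path of linear combinations $\alpha u_+ - \beta u_-$ parametrized by $(\alpha,\beta)$ moving along an arc from $(1,0)$ to $(0,1)$ (and its odd extension), renormalized in $|\cdot|_{p,h}$. The key inequality to verify is the nonlocal Picone/Diaz--Saa-type estimate
\[
\|\alpha u_+ - \beta u_-\|^p \;\le\; \lambda\int_\Omega h\,|\alpha u_+-\beta u_-|^p\diff x\qquad\text{for all }\alpha,\beta\ge 0,
\]
which for the fractional $p$-Laplacian follows by testing the eigenvalue equation with suitable truncations and exploiting the pointwise inequality $|\alpha a-\beta b|^p\ge\alpha^p|a_+|^p+\beta^p|a_-|^p+\ldots$ for the kernel $a=u(x)-u(y)$; this ensures the image of $f$ lies at level $\le\lambda<\lambda_2$, contradicting the definition of $\lambda_2$. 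The main obstacle is step (c): the nonlocal coupling between $u_+$ and $u_-$ prevents the elementary local decomposition used in the classical $p$-Laplacian case, and one must invoke the sharp nonlocal Picone-type inequality from \cite{BF,BP} adapted to the weight $h\in\widetilde{\W_p}$, using Lemma~\ref{norms.est} to justify all integrations.
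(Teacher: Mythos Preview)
Your outline is essentially the same three–step strategy the paper uses (which in turn adapts Brasco--Parini \cite{BP} to the weighted setting): a constrained minimax argument for (a), a connectedness/simplicity argument for (b), and the explicit odd path built from $u_\pm$ for (c). For (b) the paper is a bit more elementary than your Ekeland route: it works directly with the seminorm $|\cdot|_{p,h}$, takes disjoint balls $\mathcal{B}_\epsilon^\pm$ around $\pm e_1$ in that seminorm, and uses weak compactness plus Lemma~\ref{convergence} to reach the contradiction; no deformation or Ekeland is needed.

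There is, however, one genuine point you have glossed over in (c), and it is precisely the new difficulty introduced by a \emph{sign-changing} weight $h$. Your path $\alpha u_+-\beta u_-$ must be ``renormalized in $|\cdot|_{p,h}$'' to land in $S_p(h,\Omega)$, i.e.\ you divide by $\big(\alpha^p\int_\Omega h\,u_+^p\diff x+\beta^p\int_\Omega h\,u_-^p\diff x\big)^{1/p}$. For this to make sense (and for the path to be continuous) you need
\[
\int_\Omega h\,u_+^p\diff x>0\quad\text{and}\quad\int_\Omega h\,u_-^p\diff x>0,
\]
which is \emph{not} automatic when $h$ changes sign. The paper singles this out: testing the equation with $u_+$ and using the pointwise inequality \eqref{ineq.v} gives $\|u_+\|^p\le\lambda\int_\Omega h\,u_+^p\diff x$, and since $u_+\not\equiv 0$ the left side is positive; similarly for $u_-$. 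Only after securing these two strict inequalities can one invoke the Brasco--Parini path construction verbatim. You should make this step explicit, since without it the renormalization in your path is not even well defined.
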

	\begin{proof}
		
		\vskip4pt
		To prove that $\lambda_2$ is an eigenvalue of \eqref{EP} we use a minimax principle by Cuesta \cite[Proposition 2.7]{Cuesta.Min-max} and the Lagrange multiplier rule. Precisely, we find a critical point of the functional
		$$I(u):=\|u\|^p$$
		restricted to the $C^1$ manifold $S_p(h,\Omega)$. Arguing as in \cite[Proof of Theorem 4.1]{BP} by using Lemma~\ref{convergence} instead of the compact imbedding $W_0^{s,p}(\Omega)\hookrightarrow\hookrightarrow L^p(\Omega)$ we can show that $I$ satisfies the Palais-Smale condition on $S_p(h,\Omega).$ Then $\lambda_2$ is an eigenvalue of problem \eqref{EP} due to \cite[Proposition 2.7]{Cuesta.Min-max} and the Lagrange multiplier rule.
		
		\vskip4pt
		To prove $\lambda_2>\lambda_1,$ we suppose by contradiction that $\lambda_2=\lambda_1.$ By the definition of $\lambda_2$, for each $n\in\mathbb{N}$ we find an odd continuous mapping $f_n:  \mathbb{S}^1\to S_p(h,\Omega)$ such that
		\begin{equation}\label{2nd.1}
		\underset{u\in f_n(\mathbb{S}^1)}{\max}\|u\|^p\leq \lambda_1+\frac{1}{n}.
		\end{equation}
		Let $\epsilon\in (0,1)$ and consider the two sets in $W_0^{s,p}(\Omega):$
		$$\mathcal{B}_\epsilon^+=\{u\in S_p(h,\Omega):|u-e_1|_{p,h}<\epsilon\}\ \ \text{and}\ \ \mathcal{B}_\epsilon^-=\{u\in S_p(h,\Omega):|u-(-e_1)|_{p,h}<\epsilon\},$$
		where $e_1$ is the first eigenfunction of \eqref{EP}. Since $f_n(\mathbb{S}^1)$ is symmetric and connected while $\mathcal{B}_\epsilon^+$ and $\mathcal{B}_\epsilon^-$ are disjoint, $f_n(\mathbb{S}^1)\not\subset\mathcal{B}_\epsilon^+\cup\mathcal{B}_\epsilon^-.$ Hence, there is a sequence $\{u_n\}\subset S_p(h,\Omega)$ and 
		\begin{equation}\label{2nd.2}
		u_n\in f_n(\mathbb{S}^1)\setminus(\mathcal{B}_\epsilon^+\cup\mathcal{B}_\epsilon^-).
		\end{equation}
	Clearly, $\{u_n\}$ is bounded in $W_0^{s,p}(\Omega)$ in view of \eqref{2nd.1} and hence, up to a subsequence, $u_n\rightharpoonup \bar{u}$ in $W_0^{s,p}(\Omega)$ due to the reflexiveness of $W_0^{s,p}(\Omega)$.  Then by \eqref{2nd.1} and the weak convergence of $\{u_n\}$ we obtain 
	$$\|\bar{u}\|^p\leq \lambda_1.$$ On the other hand, $\bar{u}\in S_p(h,\Omega)$ due to Lemma~\ref{convergence}. Thus, by the variational characterization of $\lambda_1,$ $\bar{u}=e_1$ or $\bar{u}=-e_1$. Meanwhile, by \eqref{2nd.2} and Lemma~\ref{convergence}, we deduce $\bar{u}\in S_p(h,\Omega)\setminus(\mathcal{B}_\epsilon^+\cup\mathcal{B}_\epsilon^-),$ a contradiction. We have just proved that $\lambda_2>\lambda_1.$
	
	\vskip4pt
	Finally, we show that $\lambda_2$ is the exact second eigenvalue of \eqref{EP} when  $h\in\widetilde{\W_p}$ is assumed in addition. Let $(\lambda,u)$ be an eigenpair of \eqref{EP} with $\lambda>\lambda_1.$ In view of Corollary~\ref{Cor.sign-changing}, $u$ must be sign-changing, i.e., $u_+\not\equiv 0$ and  $u_-\not\equiv 0$. Using $u_+$ as a test functions for \eqref{EP} we obtain
	\begin{equation}\label{2nd.3}
	\int_{\mathbb{R}^{2N}}\frac{|u(x)-u(y)|^{p-2}(u(x)-u(y))(u_+(x)-u_+(y))}{|x-y|^{N+sp}}\diff x \diff y=\lambda\int_{\Omega}h(x)u_+^p\diff x.
	\end{equation}
	Note that for any measurable function $v$, we have
	\begin{equation}\label{ineq.v}
	|v(x)-v(y)|^{p-2}(v(x)-v(y))(v_+(x)-v_+(y))\geq |v_+(x)-v_+(y)|^p
	\end{equation}
	for a.e.\ $x,y\in\mathbb{R}^N.$ Invoking \eqref{ineq.v}, we deduce from \eqref{2nd.3} that
	$$\|u_+\|^p\leq \lambda\int_{\Omega}h(x)u_+^p\diff x.$$
	Since $u_+\not\equiv 0$ and $\lambda>\lambda_1$ we infer from the last inequality that
	\begin{equation}\label{2nd.4}
\int_{\Omega}h(x)u_+^p\diff x>0.
	\end{equation}
Similarly, by using $u_-$ as a test functions for \eqref{EP}, we obtain
\begin{equation*}
\int_{\mathbb{R}^{2N}}\frac{|u(x)-u(y)|^{p-2}(u(x)-u(y))(u_-(x)-u_-(y))}{|x-y|^{N+sp}}\diff x \diff y=-\lambda\int_{\Omega}h(x)u_-^p\diff x,
\end{equation*}
i.e.,
\begin{align*}
\int_{\mathbb{R}^{2N}}&\frac{|(-u)(x)-(-u)(y)|^{p-2}((-u)(x)-u(y))((-u)_+(x)-(-u)_+(y))}{|x-y|^{N+sp}}\diff x \diff y\\
&\hspace*{7cm}=\lambda\int_{\Omega}h(x)u_-^p\diff x.
\end{align*}
Applying \eqref{ineq.v} again, we get from the last equality that
$$\|u_-\|^p=\|(-u)_+\|^p\leq \lambda\int_{\Omega}h(x)u_-^p\diff x,$$
and hence, 
\begin{equation}\label{2nd.5}
\int_{\Omega}h(x)u_-^p\diff x>0.
\end{equation}
Using \eqref{2nd.4}, \eqref{2nd.5}, the definition \eqref{def.lambda2} of $\lambda_2$ and arguing as in \cite[Proof of Theorem 4.1]{BP}, we obtain $\lambda\geq\lambda_2.$ The proof is complete.

\end{proof}

	
	
	\section{A-priori bounds}\label{regularity}
	
	\noindent
	In this section, we obtain a-priori bounds of solutions to
	\begin{eqnarray}\label{eq.bounds}
	\begin{cases}
	(-\Delta)_p^su=f(x,u) \quad &\text{in } \Omega,\\
	u=0\quad &\text{in } \mathbb{R}^N\setminus \Omega,
	\end{cases}
	\end{eqnarray}
	where the nonlinear term $f$ satisfies:
	\begin{itemize}
		\item [(F1)] $f:\ \Omega\times \mathbb{R}\to \mathbb{R}$ is a Carath\'eodory function such that 
		$$|f(x,t)|\leq \sum_{i=1}^{m}h_i(x)|t|^{q_i-1}\  \text{for a.e.}\ x\in\Omega\  \text{and all}\  t\in\mathbb{R},$$
		where $q_i\in [1,p_s^\ast)$ and $h_i$ are nonnegative functions of class $\widetilde{\W_{q_i}}$ ($i=1,\cdots,m$).
	\end{itemize} 
	Our main result in this section is the following.
	\begin{theorem}\label{Theo.A-priori bounds}
		Assume that $\textup{(F1)}$ holds. Then there exist positive constants $C,\gamma_1,\gamma_2$ such that for any solution $u$ to problem \eqref{eq.bounds}, $u$ is bounded and 
		\begin{equation}\label{priori.bounds}
		|u|_\infty\leq C\max\{|u|_{\widetilde{q}}^{\gamma_1},|u|_{\widetilde{q}}^{\gamma_2}\},
		\end{equation}
		where $\widetilde{q}:=\underset{1\leq i\leq m}{\max}\ \widetilde{q}_i$ with $\widetilde{q}_i\in (1,p_s^\ast)$ satisfying $\frac{1}{r_i}+\frac{a_i}{p}+\frac{\max\{p,q_i\}-a_i}{\widetilde{q}_i}=1$ {\rm (}$i=1,\cdots,m${\rm )}.
		
	\end{theorem}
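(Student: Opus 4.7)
The plan is to carry out a De Giorgi iteration on the super-level sets of $u$, following the strategy of \cite{Ho-Sim}. Since condition $(\textup{F1})$ is symmetric in $u$, it suffices to bound $u_+$; the bound on $u_-$ follows by applying the same argument to $-u$, which solves the equation with nonlinearity $-f(x,-u)$ satisfying the identical growth.

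For $k \geq 1$, set $w_k := (u-k)_+ \in W_0^{s,p}(\Omega)$ and $A_k := \{x \in \Omega : u(x) > k\}$. Testing the weak formulation of \eqref{eq.bounds} against $w_k$ and using the standard pointwise inequality
\[
|u(x)-u(y)|^{p-2}(u(x)-u(y))\bigl(w_k(x)-w_k(y)\bigr) \geq |w_k(x)-w_k(y)|^p
\]
yields $\|w_k\|^p \leq \sum_i \int_{A_k} h_i u^{q_i-1} w_k \diff x$. On $A_k$ one has $u^{q_i-1} \leq C(k^{q_i-1} + w_k^{q_i-1})$, so each summand splits into two pieces of the form $k^{q_i-1}\int h_i w_k$ and $\int h_i w_k^{q_i}$.

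The crux is to estimate each piece in the form $C\cdot P(u)\cdot \|w_k\|^{\gamma}\cdot |A_k|^{\beta_i}$ with $\gamma \leq p$ and $\beta_i > 0$. For each $i$, I would split $a_i = a_i' + a_i''$ with $a_i' \in [0, q_i-1]$ and $a_i'' \in [0,1)$, rewrite the integrand in the Hardy-friendly form $(h_i\rho^{sa_i})(u/\rho^s)^{a_i'}u^{q_i-1-a_i'}(w_k/\rho^s)^{a_i''}w_k^{1-a_i''}\mathbf{1}_{A_k}$, and apply H\"older with exponents $r_i$, $p/a_i'$, $\widetilde{q}_i/(q_i-1-a_i')$, $p/a_i''$, $\widetilde{q}_i/(1-a_i'')$ together with a residual exponent $\sigma_i$ for the indicator. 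Using the defining identity of $\widetilde{q}_i$ in $\widetilde{\W}_{q_i}$, the residual exponent computes to
\[
\tfrac{1}{\sigma_i} = \tfrac{\max\{p,q_i\}-q_i}{\widetilde{q}_i} \geq 0,
\]
which vanishes when $q_i \geq p$. To recover a strictly positive $\beta_i$ in that case, I would additionally apply the interpolation $|w_k|_{\widetilde{q}_i} \leq |A_k|^{1/\widetilde{q}_i - 1/p_s^*}|w_k|_{p_s^*}$ to the $|w_k|_{\widetilde{q}_i}^{1-a_i''}$ factor, then Theorem \ref{Hardy} and Sobolev on the remaining pieces. The strict gap $\widetilde{q}_i < p_s^*$ built into $\widetilde{\W}_{q_i}$ via the $\max\{p,q_i\}$ is precisely what secures $\beta_i > 0$.

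Absorbing $\|u\|$ into $|u|_{\widetilde{q}}$ by testing \eqref{eq.bounds} against $u$ itself (invoking Lemma \ref{norms.est}) and rearranging, one arrives at $\|w_k\|^{p-\gamma} \leq C(|u|_{\widetilde{q}})|A_k|^{\beta}$ with $\beta > 0$. Then for $h > k$, Chebyshev gives $|A_h|(h-k)^{p_s^*} \leq |w_k|_{p_s^*}^{p_s^*}$, which combined with Sobolev yields the recursion
\[
|A_h| \leq \frac{M(|u|_{\widetilde{q}})}{(h-k)^{p_s^*}}\,|A_k|^{1+\delta}, \qquad \delta > 0,
\]
and the classical De Giorgi vanishing lemma produces a threshold $k_0 \leq C\max\{|u|_{\widetilde{q}}^{\gamma_1},|u|_{\widetilde{q}}^{\gamma_2}\}$ with $|A_{k_0}|=0$, establishing \eqref{priori.bounds}. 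The main obstacle will be verifying $\beta_i > 0$ in the borderline case $q_i \geq p$ (where one must lean entirely on the interpolation rather than on the H\"older residual), and the bookkeeping that produces the two exponents $\gamma_1, \gamma_2$ in \eqref{priori.bounds} accounting for whether $|u|_{\widetilde{q}}$ exceeds or falls below $1$.
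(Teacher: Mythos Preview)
Your De Giorgi scheme has a genuine gap: the recursion you derive on $|A_k|$ does not close. With your Hardy--H\"older decomposition the only $w_k$ factors are $(w_k/\rho^s)^{a_i''}$ and $w_k^{1-a_i''}$, so after Hardy and the interpolation $|w_k|_{\widetilde q_i}\le |A_k|^{1/\widetilde q_i-1/p_s^\ast}|w_k|_{p_s^\ast}$ the total power of $\|w_k\|$ on the right is exactly $1$ (so $\gamma=1$), and for $q_i\ge p$ the entire measure gain is $\beta_i=(1-a_i'')(1/\widetilde q_i-1/p_s^\ast)\le 1/\widetilde q_i-1/p_s^\ast$. Combining $\|w_k\|^{p-1}\le C(u)|A_k|^{\beta}$ with Chebyshev--Sobolev gives
\[
|A_h|\le \frac{M}{(h-k)^{p_s^\ast}}\,|A_k|^{\beta p_s^\ast/(p-1)},
\]
so you need $\beta>(p-1)/p_s^\ast$, i.e.\ $\widetilde q_i<p_s^\ast/p$. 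But the class $\widetilde{\W}_{q_i}$ only guarantees $\widetilde q_i<p_s^\ast$. A concrete failure: $N=3$, $s=1$, $p=q_1=2$, $a_1=0$, $h_1\in L^2(\Omega)$ satisfies (F1) with $\widetilde q_1=4$, yet $\beta_1=\tfrac14-\tfrac16=\tfrac1{12}<\tfrac16=(p-1)/p_s^\ast$, so $\delta<0$ and the iteration diverges. The underlying reason is that by freezing the $u^{\,q_i-1-a_i'}$ factor in the global norm $|u|_{\widetilde q}$, you throw away precisely the $|A_k|$--dependence needed to make the recursion supercritical.

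The paper's proof avoids this by a genuinely two--level argument: it iterates on $Z_n:=\int_{A_{k_n}}(u-k_n)^{\widetilde q}\diff x$ along the dyadic levels $k_n=k_\ast(2-2^{-n})$ and uses the pointwise comparison $u<(2^{n+2}-1)\,w_n$ on $A_{k_{n+1}}$ to replace \emph{every} factor of $u$ by $w_n$, the truncation at the \emph{previous} level. This puts the full exponent $\max\{p,q_i\}-a_i$ on $|w_n|_{\widetilde q}$; after Young (to absorb $\|w_{n+1}\|^{a_i}$), the embedding $W_0^{s,p}\hookrightarrow L^{\bar q}$ with $\bar q\in(\widetilde q,p_s^\ast)$, and the measure bound $|A_{k_{n+1}}|\le 2^{(n+1)\widetilde q}k_\ast^{-\widetilde q}Z_n$, one reaches
\[
Z_{n+1}\le C\,(k_\ast^{-\sigma_1}+k_\ast^{-\sigma_2})\,b^n\bigl(Z_n^{1+\delta_1}+Z_n^{1+\delta_2}\bigr),
\qquad \delta_1=\min_i\Bigl(\tfrac{\max\{p,q_i\}-a_i}{p-a_i}-\tfrac{\widetilde q}{\bar q}\Bigr)>0,
\]
the positivity of $\delta_1$ coming automatically from $\frac{\max\{p,q_i\}-a_i}{p-a_i}\ge 1$ and $\bar q>\widetilde q$. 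The two exponents $\gamma_1,\gamma_2$ in \eqref{priori.bounds} then arise from choosing $k_\ast$ so that $Z_0\le|u|_{\widetilde q}^{\widetilde q}$ meets the smallness threshold of the geometric convergence lemma. In short: replace your global bound on $u^{q_i-1}$ by the two--level comparison $u\lesssim 2^n w_n$ on $A_{k_{n+1}}$, and iterate on $Z_n$ rather than on $|A_k|$.
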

	\begin{proof}
		Let $u$ be a weak solution to problem \eqref{eq.bounds}. We define the recursion sequence $\{Z_n\}_{n=0}^\infty$ as follows:
		\begin{equation}
		Z_n:=\int_{A_{k_n}}(u-k_n)^{\widetilde{q}}\diff x,\ \ n\in \mathbb{N}\cup \{0\}=:\mathbb{N}_0,
		\end{equation} 
		where 
		$$k_n:=k_\ast\left(2-\frac{1}{2^n}\right),\ \ n\in \mathbb{N}_0$$
		with $k_\ast>0$ to be specified later, and
		$$A_{k_n}:=\{x\in \Omega: u(x)>k_n\},\ \  n\in \mathbb{N}_0.$$ 
		Noting that $k_{\ast}\leq k_n \leq k_{n+1} < 2 k_{\ast}$ for all $n\in \mathbb{N}_0$ and recalling the definition of $k_n,$ we have
		\begin{equation}\label{Zn.est1}
		Z_n=\int_{A_{k_n} }(u-k_n)^{\widetilde{q}}\diff x\geq \int_{A_{k_{n+1}} }u^{\widetilde{q}} \left(1-\frac{k_n}{k_{n+1}}\right)^{\widetilde{q}}\diff x \geq \int_{A_{k_{n+1}} }\frac{u^{\widetilde{q}}}{2^{(n+2)\widetilde{q}}}\diff x .
		\end{equation}
		Recalling the definition of $k_n$ again, we estimate the Lebesgue measure of $A_{k_{n+1}}$ as follows:
		\begin{equation}\label{An+1}
		|A_{k_{n+1}}|\leq \int_{A_{k_{n+1}}} \left(\frac{u-k_n}{k_{n+1}-k_n}\right)^{\widetilde{q}}\diff x \leq \int_{A_{k_n} }\frac{2^{(n+1)\widetilde{q}}}{k_{\ast}^{\widetilde{q}}}(u-k_n)^{\widetilde{q}}\diff x=\frac{2^{(n+1)\widetilde{q}}}{k_{\ast}^{\widetilde{q}}}Z_n.
		\end{equation}
		
		For each $n \in \mathbb{N}_0$, set $w_n:=(u-k_n)_+.$ It is easy to see that $w_n\in W_0^{s,p}(\Omega)$ and it satisfies the following estimates:
		\begin{equation}\label{wn+1<wn}
		w_{n+1}(x)\leq w_{n}(x)\quad \text{a.e.}\ x\in\Omega
		\end{equation}
		and
		\begin{equation}\label{u<wn}
		u(x)< (2^{n+2}-1)w_{n}(x)\quad \text{a.e.}\ x\in A_{k_{n+1}}.
		\end{equation}
		Using $w_{n+1}$ as a test function for \eqref{eq.bounds}, we obtain
		\begin{equation}\label{eq.wn+1}
		\int_{\mathbb{R}^{2N}}\frac{|u(x)-u(y)|^{p-2}(u(x)-u(y))(w_{n+1}(x)-w_{n+1}(y))}{|x-y|^{N+sp}}\diff x\diff y=\int_{\Omega}f(x,u)w_{n+1}(x)\diff x.
		\end{equation}	
		Applying \eqref{ineq.v} for $v=u-k_{n+1}$, we have $v_+=w_{n+1}$ and
		\begin{align}\label{norm.wn+1}
		\|w_{n+1}\|^p &=\int_{\mathbb{R}^{2N}}\frac{|w_{n+1}(x)-w_{k+1}(y)|^p}{|x-y|^{N+sp}}\diff x\diff y\notag\\
		&\leq \int_{\mathbb{R}^{2N}}\frac{|u(x)-u(y)|^{p-2}(u(x)-u(y))(w_{n+1}(x)-w_{n+1}(y))}{|x-y|^{N+sp}}\diff x\diff y.
		\end{align}
		Next, we estimate the right-hand side of \eqref{eq.wn+1}. 
		We have
		\begin{equation}\label{est.int.f}
		\int_{\Omega}f(x,u)w_{n+1}(x)\diff x\leq \sum_{i=1}^{m}\int_{\Omega}h_i(x)|u|^{q_i-1}w_{n+1}(x)\diff x.
		\end{equation}
		Let $i\in\{1,\cdots,m\}.$ For the case $q_i\geq p$,  using \eqref{wn+1<wn}, \eqref{u<wn} and invoking the Hardy and H\"older inequalities, we have
		\begin{align*}
		\int_{\Omega}h_i(x)|u|^{q_i-1}w_{n+1}(x)\diff x&=\int_{A_{k_{n+1}}}h_i \rho^{a_is}\left|\frac{w_{n+1}}{\rho^s}\right|^{a_i}u^{q_i-1}w_{n+1}^{1-a_i}\diff x\notag\\
		&\leq (2^{n+2}-1)^{q_i-1}\int_{A_{k_{n+1}}}h_i \rho^{a_is}\left|\frac{w_{n+1}}{\rho^s}\right|^{a_i}w_n^{q_i-a_i}\diff x\notag\\
		&\leq (2^{n+2}-1)^{q_i-1}|h_i\rho^{a_is}|_{r_i}\left|\frac{w_{n+1}}{\rho^s}\right|_p^{a_i}|w_n|_{\widetilde{q}}^{q_i-a_i}|\Omega|^{\frac{t_i^{(1)}}{\widetilde{q}}}\\
		&\leq C_1^{(i)} 2^{n(q_i-1)}\|w_{n+1}\|^{a_i}Z_n^{\frac{q_i-a_i}{\widetilde{q}}},
		\end{align*}
		where $\frac{1}{r_i}+\frac{a_i}{p}+\frac{q_i-a_i}{\widetilde{q}}+\frac{t_i^{(1)}}{\widetilde{q}}=1$ with $t_i^{(1)} \in [0,\infty).$ Here and in the rest of the proof, $C_k^{(i)}$ and $C_k$ $(k\in\mathbb{N},\ i\in\{1,\cdots,m\})$ are positive constants independent of $u$, $n$ and $k_\ast.$ Using the Young inequality, we deduce from the last inequality that
		\begin{equation*}
		\int_{\Omega}h_i(x)|u|^{q_i-1}w_{n+1}(x)\diff x\leq \frac{1}{m+1}\|w_{n+1}\|^p+C_2^{(i)}2^{\frac{n(q_i-1)p}{p-a_i}}Z_n^{\frac{(q_i-a_i)p}{(p-a_i)\widetilde{q}}}.
		\end{equation*}
		Similarly, we estimate for the case $q_i<p$ as follows: 
		\begin{align*}
		\int_{\Omega}h_i(x)|u|^{q_i-1}w_{n+1}(x)\diff x&=\int_{A_{k_{n+1}}}h_i\rho^{a_is}\left|\frac{w_{n+1}}{\rho^s}\right|^{a_i}u^{q_i-p}u^{p-1}w_{n+1}^{1-a_i}\diff x\notag\\
		&\leq k_\ast^{q_i-p}(2^{n+2}-1)^{p-1}\int_{A_{k_{n+1}}}h_i\rho^{a_is}\left|\frac{w_{n+1}}{\rho^s}\right|^{a_i}w_n^{p-a_i}\diff x\notag\\
		&\leq k_\ast^{q_i-p}(2^{n+2}-1)^{p-1}|h_i\rho^{a_is}|_{r_i}\left|\frac{w_{n+1}}{\rho^s}\right|_p^{a_i}|w_n|_{\widetilde{q}}^{p-a_i}|\Omega|^{\frac{t_i^{(2)}}{\widetilde{q}}}\\
		&\leq C_3^{(i)}k_\ast^{q_i-p} 2^{n(p-1)}\|w_{n+1}\|^{a_i}Z_n^{\frac{p-a_i}{\widetilde{q}}},
		\end{align*}
		where $\frac{1}{r_i}+\frac{a_i}{p}+\frac{p-a_i}{\widetilde{q}}+\frac{t_i^{(2)}}{\widetilde{q}}=1.$	Using the Young inequality again, we deduce from the last inequality that	
		\begin{equation*}
		\int_{\Omega}h_i(x)|u|^{q_i-1}w_{n+1}(x)\diff x\leq \frac{1}{m+1}\|w_{n+1}\|^p+C_4^{(i)}k_\ast^{\frac{(q_i-p)p}{p-a_i}}2^{\frac{n(p-1)p}{p-a_i}}Z_n^{\frac{p}{\widetilde{q}}}.
		\end{equation*}
		Thus, in any case we have
		\begin{equation*}
		\int_{\Omega}h_i(x)|u|^{q_i-1}w_{n+1}(x)\diff x\leq \frac{1}{m+1}\|w_{n+1}\|^p+C_5^{(i)}k_\ast^{\frac{(\min\{p,q_i\}-p)p}{p-a_i}}2^{\frac{n(\max\{p,q_i\}-1)p}{p-a_i}}Z_n^{\frac{(\max\{p,q_i\}-a_i)p}{(p-a_i)\widetilde{q}}}.
		\end{equation*}
		Hence,
		\begin{align}\label{est.int.f.1}
		\sum_{i=1}^{m}\int_{\Omega}&h_i(x)|u|^{q_i-1}w_{n+1}(x)\diff x\notag\\
		&\leq \frac{m}{m+1}\|w_{n+1}\|^p+\sum_{i=1}^{m}C_5^{(i)}k_\ast^{\frac{(\min\{p,q_i\}-p)p}{p-a_i}}2^{\frac{n(\max\{p,q_i\}-1)p}{p-a_i}}Z_n^{\frac{(\max\{p,q_i\}-a_i)p}{(p-a_i)\widetilde{q}}}.
		\end{align}
		From \eqref{eq.wn+1}-\eqref{est.int.f.1}, we obtain
		$$\|w_{n+1}\|^p\leq C_6\sum_{i=1}^{m}k_\ast^{\frac{(\min\{p,q_i\}-p)p}{p-a_i}}2^{\frac{n(\max\{p,q_i\}-1)p}{p-a_i}}Z_n^{\frac{(\max\{p,q_i\}-a_i)p}{(p-a_i)\widetilde{q}}}.$$
		Hence,
		\begin{equation}\label{est.wn+1.by.Zn}
		\|w_{n+1}\|^{\widetilde{q}} \leq C_7\sum_{i=1}^{m}k_\ast^{\frac{(\min\{p,q_i\}-p)\widetilde{q}}{p-a_i}}2^{\frac{n(\max\{p,q_i\}-1)\widetilde{q}}{p-a_i}}Z_n^{\frac{\max\{p,q_i\}-a_i}{p-a_i}}.
		\end{equation}
		Fix $\bar{q}\in(\widetilde{q},p_s^\ast)$. Invoking the imbedding $W_0^{s,p}(\Omega)\hookrightarrow L^{\bar{q}}(\Omega)$ and the H\"older inequality, we estimate
		\begin{equation*}\label{E2}
		Z_{n+1}
		=\int_{A_{k_{n+1}}}w_{n+1}^{\widetilde{q}}\diff x\leq |w_{n+1}|_{\bar{q}}^{\widetilde{q}}|A_{k_{n+1}}|^{1-\frac{\widetilde{q}}{\bar{q}}}\leq C_{\bar{q}}^{\widetilde{q}}\|w_{n+1}\|^{\widetilde{q}}|A_{k_{n+1}}|^{1-\frac{\widetilde{q}}{\bar{q}}},
		\end{equation*}
		where $C_{\bar{q}}$ is the imbedding constant for  $W_0^{s,p}(\Omega)\hookrightarrow L^{\bar{q}}(\Omega)$. Combining this with  \eqref{est.wn+1.by.Zn}, we deduce
		\begin{equation*}
		Z_{n+1}\leq C_7C_{\bar{q}}^{\widetilde{q}}\left[\sum_{i=1}^{m}k_\ast^{\frac{(\min\{p,q_i\}-p)\widetilde{q}}{p-a_i}}2^{\frac{n(\max\{p,q_i\}-1)\widetilde{q}}{p-a_i}}Z_n^{\frac{\max\{p,q_i\}-a_i}{p-a_i}}\right]|A_{k_{n+1}}|^{1-\frac{\widetilde{q}}{\bar{q}}}.
		\end{equation*}
		Then, using \eqref{An+1} we get
		\begin{equation*}
		Z_{n+1}\leq C_{8}\left[\sum_{i=1}^{m}k_\ast^{\frac{(\min\{p,q_i\}-p)\widetilde{q}}{p-a_i}}2^{\frac{n(\max\{p,q_i\}-1)\widetilde{q}}{p-a_i}}Z_n^{\frac{\max\{p,q_i\}-a_i}{p-a_i}}\right]\frac{2^{\frac{n\widetilde{q}(\bar{q}-\widetilde{q})}{\bar{q}}}}{k_{\ast}^{\frac{\widetilde{q}(\bar{q}-\widetilde{q})}{\bar{q}}}}Z_n^{1-\frac{\widetilde{q}}{\bar{q}}}.
		\end{equation*}
		That is,
		
		\begin{equation}\label{recur.ineq}
		Z_{n+1}\leq C_9(k_\ast^{-\sigma_1}+k_\ast^{-\sigma_2})b^n(Z_n^{1+\delta_1}+Z_n^{1+\delta_2}),
		\end{equation}
		where
		$$	0<\sigma_1:=\underset{1\leq i\leq m}{\min}\ \frac{(p-\min\{p,q_i\})\widetilde{q}}{p-a_i}+\frac{\widetilde{q}(\bar{q}-\widetilde{q})}{\bar{q}}\leq \sigma_2:=\underset{1\leq i\leq m}{\max}\frac{(p-\min\{p,q_i\})\widetilde{q}}{p-a_i}+\frac{\widetilde{q}(\bar{q}-\widetilde{q})}{\bar{q}},$$
		$$b:={\underset{1\leq i\leq m}{\max}2^{\frac{(\max\{p,q_i\}-1)\widetilde{q}}{p-a_i}+\frac{\widetilde{q}(\bar{q}-\widetilde{q})}{\bar{q}}}}>1,$$
		and
		$$0<\delta_1:=\underset{1\leq i\leq m}{\min}\frac{\max\{p,q_i\}-a_i}{p-a_i}-\frac{\widetilde{q}}{\bar{q}}\leq \delta_2:=\frac{\max\{p,q_i\}-a_i}{p-a_i}-\frac{\widetilde{q}}{\bar{q}}.$$
		Applying \cite[Lemma 4.3]{Ho-Sim} we obtain from \eqref{recur.ineq}  that
		\begin{equation}\label{Z0.to.0}
		Z_n\to 0\quad \text{as}\quad n\to\infty,
		\end{equation}	
		provided that
		\begin{equation}
		Z_0 \leq \min \bigg\{\big(2C_{9}(k_\ast^{-\sigma_1}+k_\ast^{-\sigma_2})\big)^{-\frac{1}{\delta_1}} b^{-\frac{1}{\delta_1^2}}, \big(2C_{9}(k_\ast^{-\sigma_1}+k_\ast^{-\sigma_2})\big)^{-\frac{1}{\delta_2}} b^{-\frac{1}{\delta_1\delta_2} -\frac{\delta_2-\delta_1}{\delta_2^2}}\bigg\}.   \label{{aprioriest16}}
		\end{equation}
		We have 
		\begin{align}\label{priori.0}
		Z_0= \int_{A_{k_0}}(u-k_0)^{\widetilde{q}} \diff x =  \int_{\Omega}\big((u-k_0)_+\big)^{\widetilde{q}} \diff x \leq  \int_{\Omega}|u|^{\widetilde{q}} \diff x.
		\end{align}
		Note that
		\begin{equation}\label{priori.1}
		\begin{cases}
		\int_{\Omega}|u|^{\widetilde{q}} \diff x \leq \big(2C_{9}(k_\ast^{-\sigma_1}+k_\ast^{-\sigma_2})\big)^{-\frac{1}{\delta_1}} b^{-\frac{1}{\delta_1^2}},\\
		\int_{\Omega}|u|^{\widetilde{q}} \diff x \leq \big(2C_{9}(k_\ast^{-\sigma_1}+k_\ast^{-\sigma_2})\big)^{-\frac{1}{\delta_2}} b^{-\frac{1}{\delta_1\delta_2} -\frac{\delta_2-\delta_1}{\delta_2^2}}
		\end{cases}
		\end{equation}
		is equivalent to
		\begin{equation*}
		\begin{cases}
		k_\ast^{-\sigma_1}+k_\ast^{-\sigma_2}\leq \big(2C_{9}\big)^{-1} b^{-\frac{1}{\delta_1}}\left(\int_{\Omega}|u|^{\widetilde{q}} \diff x \right)^{-\delta_1},\\
		k_\ast^{-\sigma_1}+k_\ast^{-\sigma_2}\leq \big(2C_{9}\big)^{-1} b^{-\frac{1}{\delta_1}-\frac{\delta_2-\delta_1}{\delta_2}}\left(\int_{\Omega}|u|^{\widetilde{q}} \diff x \right)^{-\delta_2}.
		\end{cases}
		\end{equation*}
		On the other hand, we have that
		\begin{equation}\label{priori.2}
		2\max\{k_\ast^{-\sigma_1},k_\ast^{-\sigma_2}\}\leq  \big(2C_{9}\big)^{-1} b^{-\frac{1}{\delta_1}-\frac{\delta_2-\delta_1}{\delta_2}}\min\bigg\{\left(\int_{\Omega}|u|^{\widetilde{q}} \diff x \right)^{-\delta_1},\left(\int_{\Omega}|u|^{\widetilde{q}} \diff x \right)^{-\delta_2}\bigg\}
		\end{equation}
		follows from
		\begin{equation*}
		\begin{cases}
		k_\ast\geq \widetilde{C}_1\max\bigg\{|u|_{\widetilde{q}}^{\frac{\widetilde{q}\delta_1}{\sigma_1}},|u|_{\widetilde{q}}^{\frac{\widetilde{q}\delta_2}{\sigma_1}}\bigg\},\\
		k_\ast\geq \widetilde{C}_2\max\bigg\{|u|_{\widetilde{q}}^{\frac{\widetilde{q}\delta_1}{\sigma_2}},|u|_{\widetilde{q}}^{\frac{\widetilde{q}\delta_2}{\sigma_2}}\bigg\},
		\end{cases}
		\end{equation*}
		where $\widetilde{C}_1:=(4C_{9})^{\frac{1}{\sigma_1}}b^{\frac{1}{\delta_1\sigma_1}+\frac{\delta_2-\delta_1}{\delta_2\sigma_1}}$ and $\widetilde{C}_2:=(4C_{9})^{\frac{1}{\sigma_2}}b^{\frac{1}{\delta_1\sigma_2}+\frac{\delta_2-\delta_1}{\delta_2\sigma_2}}$.
		So, by choosing
		\begin{equation*}\label{aprioriestkstar}
		k_{\ast}= \max\{\widetilde{C}_1,\widetilde{C}_2\}\max\bigg\{|u|_{\widetilde{q}}^{\frac{\widetilde{q}\delta_1}{\sigma_2}},|u|_{\widetilde{q}}^{\frac{\widetilde{q}\delta_2}{\sigma_1}}\bigg\},
		\end{equation*}
		we then have \eqref{priori.2}. Combining this with  \eqref{priori.0} and \eqref{priori.1}, we deduce  \eqref{{aprioriest16}}, and hence, \eqref{Z0.to.0} holds. That is, 
		\begin{equation*}
		Z_n= \int_{\Omega} \big|u-k_{n}\big|^{\widetilde{q}} \chi_{A_{k_n}} \diff x \to 0 \text{ as } n \to \infty.
		\end{equation*}
		Note that, due to the Lebesgue dominated convergence theorem, we have 
		\[Z_n \to \int_{\Omega}\big|u-2k_{\ast}\big|^{\widetilde{q}} \chi_{A_{2k_{\ast}}} \diff x = \int_{\Omega} \big((u-2k_{\ast})_+\big)^{\widetilde{q}} \diff x  \text{ as } n \to \infty. \]
		Thus, $\int_{\Omega} \big((u-2k_{\ast})_+\big)^{\widetilde{q}} \diff x=0$ and hence, $(u-2k_{\ast})_+ = 0$ a.e. in $\Omega$, i.e., 
		\begin{equation}\esssup_{\Omega} u \leq 2k_{\ast}.   \label{aprioriestkstar1}
		\end{equation} Replacing $u$ by $-u$ in arguments above, we get 
		\begin{equation} \label{aprioriestkstar2}
		\esssup_{\Omega} (-u) \leq 2k_{\ast}.
		\end{equation}
		It follows from \eqref{aprioriestkstar1} and \eqref{aprioriestkstar2} that 
		\begin{equation}
		|u|_{\infty} \leq 2k_{\ast},   \label{aprioriestkstar3}
		\end{equation}
		hence, we obtain \eqref{priori.bounds}.
	\end{proof}
Thanks to Theorem~\ref{Theo.A-priori bounds} and the regularity results obtained in  \cite{BP,BLS}, we derive the continuity of solutions to problem \eqref{eq.bounds} as follows.
	\begin{corollary}	\label{Cor.continuity}
		Assume that $f:\ \Omega\times \mathbb{R}\to \mathbb{R}$ is a Carath\'eodory function such that $|f(x,t)|\leq \sum_{i=1}^{m}h_i(x)|t|^{q_i-1}$ for a.e. $x\in\Omega$\  and  all\  $t\in\mathbb{R},$ where $q_i\in [1,p_s^\ast)$ and $h_i\in L^{r_i}(\Omega)$ for some $r_i\in (1,\infty)$ satisfying $\frac{1}{r_i}+\frac{\max\{p,q_i\}}{p_s^\ast}<1$ {\rm ($i\in\{1,\cdots,m\}$)}. Then all solutions of problem \eqref{eq.bounds} are continuous. Furthermore, if $p\geq 2$ is additionally assumed, then solutions of problem \eqref{eq.bounds} are locally H\"older continuous.
	\end{corollary}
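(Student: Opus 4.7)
The plan is to reduce the corollary to the regularity results in \cite{BP,BLS} by first upgrading the solution to $L^{\infty}$ via Theorem~\ref{Theo.A-priori bounds}, and then showing that the right-hand side $f(\cdot,u)$ belongs to an $L^{r}$ space with $r>N/(sp)$, which is precisely the threshold those references require.

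First, I would check that the hypotheses allow us to apply Theorem~\ref{Theo.A-priori bounds}. In Definition~\ref{classB} of $\widetilde{\W_{q_i}}$, choosing $a_i=0$ reduces the inequality $\frac{1}{r_i}+\frac{a_i}{p}+\frac{\max\{p,q_i\}-a_i}{p_s^\ast}<1$ to exactly the standing assumption $\frac{1}{r_i}+\frac{\max\{p,q_i\}}{p_s^\ast}<1$. Hence $h_i\in\widetilde{\W_{q_i}}$, condition (F1) of Section~\ref{regularity} is satisfied, and Theorem~\ref{Theo.A-priori bounds} yields $u\in L^{\infty}(\Omega)$ for every weak solution $u$ of \eqref{eq.bounds}.

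Next, having $|u|_{\infty}<\infty$, the growth assumption gives $|f(x,u(x))|\leq M\sum_{i=1}^{m}h_i(x)$ for a.e.\ $x\in\Omega$, with $M=\max_i|u|_{\infty}^{q_i-1}$. Setting $\bar r:=\min_{1\leq i\leq m}r_i$, the H\"older inequality shows $f(\cdot,u)\in L^{\bar r}(\Omega)$. From $\frac{1}{r_i}+\frac{\max\{p,q_i\}}{p_s^\ast}<1\leq 1+\frac{\max\{p,q_i\}-p}{p_s^\ast}$ one gets $\frac{1}{r_i}<\frac{p}{p_s^\ast}\cdot\frac{p_s^\ast-p}{p}$... more directly, $\frac{1}{r_i}<1-\frac{p}{p_s^\ast}=\frac{sp}{N}$, so $\bar r>\frac{N}{sp}$. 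Thus $u$ solves $(-\Delta)_p^s u=g$ in $\Omega$ with $g:=f(\cdot,u)\in L^{\bar r}(\Omega)$ and $\bar r>N/(sp)$, along with the Dirichlet condition $u=0$ on $\mathbb{R}^N\setminus\Omega$, and $u\in L^{\infty}(\Omega)$.

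Finally, I would invoke the regularity theory for the fractional $p$-Laplacian with bounded solution and $L^{\bar r}$ source: for general $p>1$, the continuity in $\overline{\Omega}$ follows from the arguments in \cite{BP} (where a De Giorgi/Moser type iteration combined with the oscillation decay yields continuity up to the boundary under the threshold $\bar r>N/(sp)$). For $p\geq 2$, the local H\"older continuity of solutions with such a source term is precisely the content of the main results of \cite{BLS}. The sole verification needed is that the standing integrability of $g$ matches the hypotheses of those theorems, which is exactly the $r>N/(sp)$ condition obtained above; the rest is a direct quotation. The main technical point, therefore, is not the quoted regularity itself but the two-step passage from the weighted growth condition to an $L^{\bar r}$ source with $\bar r>N/(sp)$, which hinges on the a priori $L^{\infty}$ bound from Theorem~\ref{Theo.A-priori bounds}.
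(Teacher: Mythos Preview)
Your proposal is correct and follows essentially the same route as the paper: first verify that the assumption $h_i\in L^{r_i}(\Omega)$ with $\frac{1}{r_i}+\frac{\max\{p,q_i\}}{p_s^\ast}<1$ places each $h_i$ in $\widetilde{\W_{q_i}}$ (with $a_i=0$), apply Theorem~\ref{Theo.A-priori bounds} to get $u\in L^\infty(\Omega)$, deduce $f(\cdot,u)\in L^{\bar r}(\Omega)$ with $\bar r=\min_i r_i>N/(sp)$, and then quote \cite{BP} for continuity and \cite{BLS} for local H\"older continuity when $p\geq 2$. The paper's proof is identical in structure, only more terse; your added verification that the corollary's hypotheses imply (F1) is a welcome clarification that the paper leaves implicit.
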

	\begin{proof}
		Let $u$ be a solution to problem \eqref{eq.bounds}. Then $u\in L^\infty(\Omega)$ in view of Theorem~\ref{Theo.A-priori bounds}. Thus $f(\cdot,u)\in L^\gamma(\Omega)$ with $\gamma:=\underset{1\leq i\leq m}{\min } r_i>\frac{N}{sp},$ and hence, $u$ is continuous in view of \cite[Theorem 3.13]{BP}. If we assume in addition that $p\geq 2$, then $u$ is locally H\"older continuous due to \cite[Theorem 1.4]{BLS}.
	\end{proof}

	
	\section{Bifurcation from the first eigenvalue}\label{Bifur.}
	
	\noindent
	In this section we obtain a bifurcation result for the following problem:
	\begin{eqnarray}\label{eq.bifurcation}
	\begin{cases}
	(-\Delta)_p^su=\lambda h(x)|u|^{p-2}u+f(x,u,\lambda) \quad &\text{in } \Omega,\\
	u=0\quad &\text{in } \mathbb{R}^N\setminus \Omega,
	\end{cases}
	\end{eqnarray}
	where  $h\in\widetilde{\W_p}$ with  $|\{x \in \Omega : h(x) >0\}| >0$ and $f$ satisfies the following conditions.
	\begin{itemize}
		\item [(F2)] $f:\Omega\times\mathbb{R}\times\mathbb{R}\to\mathbb{R}$ is a Carath\'{e}odory function such that
		$$|f(x,t,\lambda)|\leq C(\lambda)\left[h_0(x)|t|^{q_0-1}+\sum_{i=1}^{m}h_i(x)|t|^{q_i-1}\right]\  \text{for a.e.}\ x\in\Omega\  \text{and all}\  t\in\mathbb{R},$$
		where $q_i\in (p,p_s^\ast)$ and $h_i$ are nonnegative functions of class $\W_{q_i}$ ($i=1,\cdots,m$); $q_0\in [1,p)$ and $h_0\in\W_{q_0}$ such that $h_0\rho^{\sigma s}\in L^\tau(\Omega)$  for some $\sigma\in [0,p-1]$ and $\tau\in (1,p_s^\ast)$ satisfying $\frac{1}{\tau}+\frac{\sigma}{p}+\frac{p-\sigma}{p_s^\ast}=1$ if $ps\ne N$ and $\frac{1}{\tau}+\frac{\sigma}{p}+\frac{p-\sigma}{p_s^\ast}<1$ if $ps=N$.
		
		\item [(F3)] $\underset{t\to 0}{\lim}\frac{f(x,t,\lambda)}{|t|^{p-2}t}=0$ uniformly for a.e. $x\in \Omega$ and $\lambda$ in a bounded interval.
	\end{itemize}

\vskip4pt
\noindent Define $H,F_\lambda: W_0^{s,p}(\Omega)\to W_0^{-s,p'}(\Omega)$ as
\begin{equation}\label{def.H}
\langle H(u),v\rangle=\int_{\Omega}h(x)|u|^{p-2}uv\diff x, \quad 
\forall u,v\in W_0^{s,p}(\Omega),
\end{equation}
and
\begin{equation}\label{def.F_lambda}
\langle F_\lambda(u),v\rangle=\int_{\Omega}f(x,u,\lambda)v\diff x, \quad 
\forall u,v\in W_0^{s,p}(\Omega).
\end{equation}
We formulate some basic properties of the operators $H$ and $F_\lambda.$
\begin{lemma}\label{Properties.H,F}
	The operators $H$ and $F_\lambda$ are well-defined and compact. Furthermore, we have
	\begin{equation}\label{bifur.Flambda}
	\lim_{\|u\|\to 0}\frac{\|F_\lambda(u)\|_{W_0^{-s,p'}(\Omega)}}{\|u\|^{p-1}}=0.
	\end{equation}
\end{lemma}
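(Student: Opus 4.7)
The strategy is to reduce the well-definedness and compactness of $H$ and $F_\lambda$ to Lemma~\ref{PS1}(i), and then to verify the asymptotic estimate \eqref{bifur.Flambda} by splitting the integral at a threshold furnished by (F3).

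\emph{Well-definedness and compactness.} The operator $H$ is precisely $\Psi'$ for $\Psi(u)=\frac{1}{p}\int_\Omega h(x)|u|^p\,\diff x$, whose density satisfies condition (F) with $m=1$, $q_1=p$ and the nonnegative weight $|h|\in\W_p$ (note $h\in\widetilde{\W_p}\subset\W_p$). Analogously, by (F2) the density of $u\mapsto\int_\Omega F(x,u,\lambda)\,\diff x$ satisfies (F) with weights $h_0,h_1,\ldots,h_m$ in their respective classes $\W_{q_i}$ (the extra integrability condition on $h_0$ is not needed at this step). Lemma~\ref{PS1}(i) then yields at once both the well-definedness and the compactness of $H$ and of $F_\lambda$.

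\emph{The limit \eqref{bifur.Flambda}.} Fix $\epsilon>0$ and let $\delta\in(0,1)$ be the threshold furnished by (F3), so that $|f(x,t,\lambda)|\leq\epsilon|t|^{p-1}$ whenever $|t|\leq\delta$. For $u\in W_0^{s,p}(\Omega)$ and $v\in W_0^{s,p}(\Omega)$ with $\|v\|\leq 1$, I decompose
\[
|\langle F_\lambda(u),v\rangle|\leq\int_{\{|u|\leq\delta\}}|f(x,u,\lambda)||v|\,\diff x+\int_{\{|u|>\delta\}}|f(x,u,\lambda)||v|\,\diff x.
\]
On $\{|u|\leq\delta\}$, the pointwise bound from (F3) together with $W_0^{s,p}(\Omega)\hookrightarrow L^p(\Omega)$ controls the first integral by $\epsilon C\|u\|^{p-1}\|v\|$. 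On $\{|u|>\delta\}$ I split $f$ according to (F2): for each $i\geq 1$ the superlinear term $h_i|u|^{q_i-1}$ is controlled via Lemma~\ref{norms.est} by $\int_\Omega h_i|u|^{q_i-1}|v|\,\diff x\leq C\|u\|^{q_i-1}\|v\|$, which is $o(\|u\|^{p-1})\|v\|$ as $\|u\|\to 0$ because $q_i>p$.

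The main obstacle is the subcritical term $h_0|u|^{q_0-1}$ with $q_0<p$, for which the crude Lemma~\ref{norms.est} bound $C\|u\|^{q_0-1}\|v\|$ is too weak. On $\{|u|>\delta\}$ I exploit $|u|^{q_0-1}\leq\delta^{q_0-p}|u|^{p-1}$ to reduce matters to estimating $\int_\Omega(\chi_{\{|u|>\delta\}}h_0)|u|^{p-1}|v|\,\diff x$. The critical integrability condition on $h_0$ imposed in (F2) is exactly what allows the H\"older/Hardy calculation in the proof of Lemma~\ref{norms.est} to be carried out at the critical exponent $b=p_s^\ast$ (or any sufficiently large finite $b$ when $ps=N$), producing
\[
\int_\Omega(\chi_{\{|u|>\delta\}}h_0)|u|^{p-1}|v|\,\diff x\leq C\,|\chi_{\{|u|>\delta\}}h_0\rho^{\sigma s}|_\tau\,\|u\|^{p-1}\|v\|.
\]
Chebyshev and the $L^p$-embedding give $|\{|u|>\delta\}|\leq C\|u\|^p/\delta^p\to 0$ as $\|u\|\to 0$, and absolute continuity of the $L^\tau$-norm then forces $|\chi_{\{|u|>\delta\}}h_0\rho^{\sigma s}|_\tau\to 0$. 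Collecting the three contributions yields $\|F_\lambda(u)\|_{W_0^{-s,p'}(\Omega)}/\|u\|^{p-1}\leq\epsilon C+o(1)$ (uniformly for $\lambda$ in any bounded interval, by the uniformity in (F3)), and the arbitrariness of $\epsilon$ establishes \eqref{bifur.Flambda}.
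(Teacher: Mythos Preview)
Your proposal is correct and follows essentially the same route as the paper's own proof: both invoke Lemma~\ref{PS1} for well-definedness and compactness, split the domain at the threshold $\delta$ supplied by (F3), handle the small-$|u|$ region via the $L^p$-embedding, the superlinear terms $h_i$ ($i\geq 1$) via Lemma~\ref{norms.est}, and the sublinear term $h_0$ by bounding $|u|^{q_0-1}\leq\delta^{q_0-p}|u|^{p-1}$ on $\{|u|>\delta\}$, then using the critical H\"older/Hardy estimate with $h_0\rho^{\sigma s}\in L^\tau$ and the absolute continuity of the $L^\tau$-norm as $|\{|u|>\delta\}|\to 0$. The only cosmetic difference is that the paper normalizes $u$ to $\widetilde{u}=u/\|u\|$ before estimating, whereas you carry the factor $\|u\|^{p-1}$ explicitly; the substance is identical.
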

We will provide a proof of this lemma after stating our main result of this section. 

\vskip4pt
\begin{definition}\rm
Define $E:=\mathbb{R}\times W_0^{s,p}(\Omega)$ equipped with the norm
\begin{equation}\label{norm.E}
\|(\lambda,u)\|_E:=(|\lambda|^2+\|u\|^2)^{1/2},\quad (\lambda,u)\in E.
\end{equation}
We say that $(\lambda,u)\in E$ solves problem \eqref{eq.bifurcation} weakly if
	\begin{equation*}
	A(u)-\lambda H(u)-F_\lambda(u)=0\quad \text{in}\ \ W_0^{-s,p'}(\Omega),
	\end{equation*}
	where $A$ is defined as in \eqref{def.A}.
\end{definition}
\begin{definition}[Global bifurcation in the sense of Rabnowitz]\rm
By a continuum $\mathcal{C}$ of nontrivial solutions of \eqref{eq.bifurcation}, we mean a connected set $\mathcal{C}$ in $E$ with respect to the topology induced by the norm \eqref{norm.E} and $\mathcal{C}\subset \{(\lambda,u)\in E: (\lambda,u)\ \ \text{solves}~ \eqref{eq.bifurcation}\ \ \text{weakly},\ \ u\ne 0\}.$ We say that $\lambda_0\in\mathbb{R}$ is a global bifurcation point of \eqref{eq.bifurcation} if there is a continuum of nontrivial solutions $\mathcal{C}$ of \eqref{eq.bifurcation}  such that $(\lambda_0,0)\in \overline{\mathcal{C}}$ (closure of $\mathcal{C}$ in $E$) and $\mathcal{C}$ is either unbounded in $E$ or there is an eigenvalue $\hat{\lambda}$ of \eqref{EP} such that $\hat{\lambda}\ne \lambda_0$ and $(\hat{\lambda},0)\in \overline{\mathcal{C}}$.
\end{definition}
Our main result in this section is the following theorem.
	\begin{theorem}\label{Theo.Bifurcation}
		Under the assumptions $(\textup{F2})$ and $(\textup{F3})$, the first eigenvalue $\lambda_1$ of the eigenvalue problem \eqref{EP} is a global bifurcation point of problem \eqref{eq.bifurcation}. 
	\end{theorem}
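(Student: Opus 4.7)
\medskip

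\noindent\emph{Proof proposal.} The plan is to recast problem \eqref{eq.bifurcation} as an operator equation on $W_0^{s,p}(\Omega)$ and apply a Rabinowitz-type global bifurcation theorem adapted to operators of class $(S_+)$. Namely, by Lemmas \ref{(S_+)} and \ref{Properties.H,F} the map
\[
T_\lambda(u):=A(u)-\lambda H(u)-F_\lambda(u),\qquad u\in W_0^{s,p}(\Omega),
\]
is of class $(S_+)$ from $W_0^{s,p}(\Omega)$ into $W_0^{-s,p'}(\Omega)$ for every $\lambda\in\mathbb{R}$, so the Berkovits $(S_+)$-degree $\deg_{(S_+)}(T_\lambda,U,0)$ is well defined on any bounded open $U$ with $0\notin T_\lambda(\partial U)$. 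Solving \eqref{eq.bifurcation} is equivalent to $T_\lambda(u)=0$, and a non-trivial solution branch bifurcating from $(\lambda_1,0)$ will be produced by a degree computation combined with a standard connectedness argument.

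First I would localize near the trivial branch and define the local index $i(T_\lambda,0):=\lim_{r\downarrow 0}\deg_{(S_+)}(T_\lambda,B_r,0)$ for those $\lambda$ which are not eigenvalues of \eqref{EP}. The growth bound in $(\textup{F2})$ together with the decay \eqref{bifur.Flambda} shows that the straight-line homotopy $T_\lambda^t(u):=A(u)-\lambda H(u)-tF_\lambda(u)$, $t\in[0,1]$, has only the trivial zero on $\partial B_r$ for $r$ small, because any hypothetical zero $u_t$ of norm $r$ would satisfy $\|u_t\|^p=\lambda\langle H(u_t),u_t\rangle+t\langle F_\lambda(u_t),u_t\rangle$, and dividing by $\|u_t\|^p$ and passing to a subsequence (using the compactness of $H$ and $F_\lambda$, the $(S_+)$ property of $A$, and \eqref{bifur.Flambda}) would force $\lambda$ to be an eigenvalue of \eqref{EP}. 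Hence
\[
i(T_\lambda,0)=\deg_{(S_+)}(A-\lambda H,B_1,0)
\]
for all non-eigenvalue $\lambda$, where the right-hand side is independent of $r$ by positive $(p-1)$-homogeneity of $A-\lambda H$.

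The core step, and the main obstacle, is to prove that this index jumps as $\lambda$ crosses $\lambda_1$. For $\lambda<\lambda_1$ a linear homotopy to $A$ is admissible (by the variational characterization of $\lambda_1$ no zero on $\partial B_1$ can appear during the homotopy), giving $\deg_{(S_+)}(A-\lambda H,B_1,0)=\deg_{(S_+)}(A,B_1,0)=1$. For $\lambda\in(\lambda_1,\lambda_2)$, which is an interval free of eigenvalues thanks to the isolation of $\lambda_1$ (Theorem \ref{eigenpair1}) and the characterization of $\lambda_2$ (Theorem \ref{2ndE}), I would follow the index argument of \cite{DKN}: use the oddness of $A-\lambda H$ and the simplicity of $\lambda_1$ (so that the kernel-like set of $A-\lambda_1 H$ on $S_p(h,\Omega)$ reduces to the pair $\{\pm e_1\}$) to implement a pseudo-gradient deformation on the manifold $S_p(h,\Omega)$ that changes the index by a non-zero amount, yielding $\deg_{(S_+)}(A-\lambda H,B_1,0)\ne 1$. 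The delicate point is to keep the homotopy $(S_+)$-admissible near $\pm e_1$, which is handled by exploiting the simplicity of $\lambda_1$ to split $W_0^{s,p}(\Omega)$ locally into the direction of $e_1$ and a cone on which $A-\lambda_1 H$ is bounded away from zero.

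Once the index jump at $\lambda_1$ is established, the general Rabinowitz-Ize alternative for $(S_+)$-maps (an abstract consequence of the additivity and homotopy invariance of the $(S_+)$-degree together with a connectedness lemma) yields a connected component $\mathcal{C}\subset E$ of non-trivial solutions of \eqref{eq.bifurcation} with $(\lambda_1,0)\in\overline{\mathcal{C}}$ which is either unbounded in $E$ or whose closure contains some other $(\hat\lambda,0)$ with $\hat\lambda$ an eigenvalue of \eqref{EP}, $\hat\lambda\ne\lambda_1$. This is the desired global bifurcation from $\lambda_1$.
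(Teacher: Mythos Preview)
Your proposal is correct and follows essentially the same route as the paper: the authors omit the proof entirely, pointing to the index argument of \cite[Proof of Theorem~3.7]{DKN} together with Lemmas~\ref{(S_+)} and~\ref{Properties.H,F} and the isolation of $\lambda_1$, which is precisely the $(S_+)$-degree scheme (homotope away $F_\lambda$ via \eqref{bifur.Flambda}, compute the index of $A-\lambda H$ on each side of $\lambda_1$, and invoke the Rabinowitz alternative) that you have sketched.
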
 
Since a proof of Theorem~\ref{Theo.Bifurcation} can be obtained by using the same argument as in \cite[Proof of Theorem 3.7]{DKN} by invoking Lemmas~\ref{(S_+)} and \ref{Properties.H,F} and the isolation of $\lambda_1,$ we omit it.

To complete this section, we now provide a proof of Lemma~\ref{Properties.H,F}.
\begin{proof}[Proof of Lemma~\ref{Properties.H,F}]
	The compactness of $H$ and $F_\lambda$ were shown in Lemma~\ref{PS1}. Finally we prove \eqref{bifur.Flambda}. We have that
$$	|\langle F_\lambda(u),v\rangle|=\left|\int_\Omega f(x,u,\lambda)v\diff x\right|,\quad \forall u,v\in W_0^{s,p}(\Omega).$$
Hence,
\begin{align}\label{Flamda.1}
\lim_{\|u\|\to 0}\frac{\|F_\lambda(u)\|_{W_0^{-s,p'}(\Omega)}}{\|u\|^{p-1}}&=\lim_{\|u\|\to 0}\sup_{\|v\|\leq 1}\frac{1}{\|u\|^{p-1}}\left|\int_\Omega f(x,u,\lambda)v\diff x\right|\notag\\
&\leq \lim_{\|u\|\to 0}\sup_{\|v\|\leq 1}\left|\int_\Omega \frac{f(x,u,\lambda)}{|u|^{p-1}}|\widetilde{u}|^{p-1}v\diff x\right|,
\end{align}
where $\widetilde{u}:=\frac{u}{\|u\|}.$	Define for $u\in W_0^{s,p}(\Omega)$ and $\delta>0$ the set
$$\Omega_\delta(u):=\{x\in\Omega:\ |u(x)|\geq \delta\}.$$
Then for a given $\delta>0,$ $|\Omega_\delta(u)|\to 0$ as $\|u\|\to 0.$ Let $\epsilon>0$ be arbitrary and given. By $(\textup{F3})$, there exists $\delta>0$ such that
\begin{equation}\label{Flamda.epsilon}
\frac{f(x,u,\lambda)}{|u|^{p-1}}\leq \epsilon\quad\text{uniformly for}\quad |u|<\delta.
\end{equation}
We now estimate the right-hand side of \eqref{Flamda.1} by splitting $\Omega$ into $\Omega\setminus\Omega_\delta(u)$ and $\Omega_\delta(u).$ On the first domain, by \eqref{Flamda.epsilon} we have
\begin{align}\label{Flamda2}
\left|\int_{\Omega\setminus\Omega_\delta(u)} \frac{f(x,u,\lambda)}{|u|^{p-1}}|\widetilde{u}|^{p-1}v\diff x\right|&\leq \epsilon \int_{\Omega\setminus\Omega_\delta(u)} |\widetilde{u}|^{p-1}v\diff x\notag\\
&\leq \epsilon\left(\int_\Omega|\widetilde{u}|^{p}\diff x\right)^{1/p'}\left(\int_\Omega|v|^p\diff x\right)^{1/p}\notag\\
&\leq C_1\epsilon\|v\|.
\end{align}
Here and in the rest of the proof, $C_k, C_k^{(i)}$ ($k\in\mathbb{N},\ i\in\{1,\cdots,m\}$) are positive constants independent of $u,v,$ and $\epsilon.$ On the other hand, on the latter domain we have that
\begin{align*}
\left|\int_{\Omega_\delta(u)} \frac{f(x,u,\lambda)}{|u|^{p-1}}|\widetilde{u}|^{p-1}v\diff x\right|&\leq C(\lambda) \int_{\Omega_\delta(u)} \frac{1}{|u|^{p-q_0}}h_0(x)|\widetilde{u}|^{p-1}v\diff x\notag\\
&\hspace{1.5cm}+\frac{C(\lambda)}{\|u\|^{p-1}}\sum_{i=1}^m\int_{\Omega_\delta(u)}h_i(x)|u|^{q_i-1}|v|\diff x\\
&\leq \frac{C(\lambda)}{\delta^{p-q_0}} \int_{\Omega_\delta(u)} h_0(x)|\widetilde{u}|^{p-1}v\diff x\notag\\
&\hspace{1.5cm}+\frac{C(\lambda)}{\|u\|^{p-1}}\sum_{i=1}^m\int_{\Omega_\delta(u)}h_i(x)|u|^{q_i-1}|v|\diff x.
\end{align*}
Let $q\in (1,p_s^\ast]$ be such that $\frac{1}{\tau}+\frac{\sigma}{p}+\frac{p-\sigma}{q}=1$ and hence,\ $q=p_s^\ast$\ if\ $ps\ne N$\ and\ $q<p_s^\ast=\infty$\ if\ $ps=N$. Thus we have that $W_0^{s,p}(\Omega)\hookrightarrow L^q(\Omega).$ Hence, by Lemma~\ref{norms.est} and the H\"older inequality, we have
\begin{align*}
\bigg|\int_{\Omega_\delta(u)} &\frac{f(x,u,\lambda)}{|u|^{p-1}}|\widetilde{u}|^{p-1}v\diff x\bigg|\leq \frac{C(\lambda)}{\delta^{p-q_0}} \int_{\Omega_\delta(u)} h_0(x)|\widetilde{u}|^{p-1}v\diff x+
\frac{C(\lambda)}{\|u\|^{p-1}}\sum_{i=1}^m C_2^{(i)}\|u\|^{q_i-1}\|v\|\notag\\
&\leq \frac{C(\lambda)}{\delta^{p-q_0}} \int_{\Omega_\delta(u)} |h_0\rho^{\sigma s}|\big|\frac{\widetilde{u}}{\rho^s}\big|^\sigma |\widetilde{u}|^{p-1-\sigma}|v|\diff x+
C(\lambda)\sum_{i=1}^mC_2^{(i)}\|u\|^{q_i-p}\|v\|\notag\\
&\leq \frac{C(\lambda)}{\delta^{p-q_0}} C_3\left(\int_{\Omega_\delta(u)} |h_0\rho^{\sigma s}|^\tau\diff x\right)^{1/\tau}\left|\frac{\widetilde{u}}{\rho^s}\right|_p^\sigma |\widetilde{u}|_{q}^{p-1-\sigma}|v|_{q}+
C(\lambda)\sum_{i=1}^mC_2^{(i)}\|u\|^{q_i-p}\|v\|\notag.
\end{align*}
Invoking the Hardy inequality and the imbedding $W_0^{s,p}(\Omega)\hookrightarrow L^{q}(\Omega),$ we get from the last inequality that
\begin{equation*}
\bigg|\int_{\Omega_\delta(u)} \frac{f(x,u,\lambda)}{|u|^{p-1}}|\widetilde{u}|^{p-1}v\diff x\bigg|\leq C_4 \left(\int_{\Omega_\delta(u)} |h_0\rho^{\sigma s}|^\tau\diff x\right)^{1/\tau}\|v\|+
C_4\sum_{i=1}^m\|u\|^{q_i-p}\|v\|.
\end{equation*}
From this and \eqref{Flamda2}, we deduce that
	\begin{equation*}
	\sup_{\|v\|\leq 1}\left|\int_\Omega \frac{f(x,u,\lambda)}{|u|^{p-1}}|\widetilde{u}|^{p-1}v\diff x\right|\leq C_1\epsilon+ C_4 \left(\int_{\Omega_\delta(u)} |h_0\rho^{\sigma s}|^\tau\diff x\right)^{1/\tau}+
	C_4\sum_{i=1}^m\|u\|^{q_i-p}.
	\end{equation*}	
Hence,
	$$\lim_{\|u\|\to 0}\sup_{\|v\|\leq 1}\left|\int_\Omega \frac{f(x,u,\lambda)}{|u|^{p-1}}|\widetilde{u}|^{p-1}v\diff x\right|\leq C_1\epsilon.$$
Since $\epsilon$ was chosen arbitrarily, we obtain \eqref{bifur.Flambda} from the last inequality and \eqref{Flamda.1}.

	\end{proof}

	\section{Some existence results}\label{existence}
	As applications of the properties of the eigenvalues and the regularity of solutions obtained in previous sections, we provide some existence results in this section. 
	\subsection{The Fredholm alternative for non-resonant problem with $\lambda$ near $\lambda_1$}
	Consider the following problem
	\begin{eqnarray}\label{eq.purturbation}
	\begin{cases}
	(-\Delta)_p^su=\lambda h(x)|u|^{p-2}u+f(x) \quad &\text{in } \Omega,\\
	u=0\quad &\text{in } \mathbb{R}^N\setminus \Omega,
	\end{cases}
	\end{eqnarray}
	where  $h\in\widetilde{\W_p}$ with  $|\{x \in \Omega : h(x) >0\}| >0$ and $f\in W_0^{-s,p'}(\Omega).$
	
	Thanks to  our result for the second eigenvalue $\lambda_2$ of problem \eqref{EP} and the Fredholm alternative due to Fu\v{c}\'{i}k et al. \cite[Chapter II, Theorem 3.2]{Fucik}, we obtain the following.
		\begin{theorem}
			Let $p\in (1,\infty)$ and $s\in (0,1).$ Let $h\in\widetilde{\W_p}$ with  $|\{x \in \Omega : h(x) >0\}| >0$ and $f\in W_0^{-s,p'}(\Omega).$ Then, for any $\lambda\in (0,\lambda_2)\setminus\{\lambda_1\}$ given, problem \eqref{eq.purturbation} has a solution.  
		\end{theorem}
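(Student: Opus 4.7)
The plan is to apply the abstract Fredholm alternative of Fu\v{c}\'{\i}k et al.~\cite[Chapter II, Theorem 3.2]{Fucik} to the operator equation
$$A(u)-\lambda H(u)=f \quad \text{in}\ \ W_0^{-s,p'}(\Omega),$$
whose solutions are precisely the weak solutions of \eqref{eq.purturbation}. Here $A$ is defined in \eqref{def.A} and $H$ in \eqref{def.H}; both are positively $(p-1)$-homogeneous, namely $A(tu)=t^{p-1}A(u)$ and $H(tu)=t^{p-1}H(u)$ for $t>0$, which is the natural homogeneity framework in which the abstract Fredholm alternative is stated for nonlinear operators of the $p$-Laplacian type.

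First, I would verify the structural hypotheses on the operators. By Lemma~\ref{(S_+)}, $A$ is continuous, bounded, strictly monotone, coercive (since $\langle A(u),u\rangle=\|u\|^p$) and of type $(\mathcal{S}_+)$. Since $h\in\widetilde{\W_p}\subset\W_p$, the nonlinearity $f(x,t):=h(x)|t|^{p-2}t$ satisfies condition $(\textup{F})$ with $m=1$, $q_1=p$, $h_1=|h|$, so Lemma~\ref{PS1}(i) (or equivalently Lemma~\ref{Properties.H,F}) yields that $H:W_0^{s,p}(\Omega)\to W_0^{-s,p'}(\Omega)$ is compact. As a consequence, $A-\lambda H$ inherits the $(\mathcal{S}_+)$ property from $A$.

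The decisive hypothesis of the Fredholm alternative is that $\lambda$ is not an eigenvalue of problem \eqref{EP}. If $\lambda\in(0,\lambda_1)$, then by the variational characterization of $\lambda_1$ in Theorem~\ref{eigenpair1}, the homogeneous equation $A(u)=\lambda H(u)$ has only the trivial solution. If $\lambda\in(\lambda_1,\lambda_2)$, the same conclusion follows from Theorem~\ref{2ndE}, which asserts that under $h\in\widetilde{\W_p}$ every eigenvalue strictly larger than $\lambda_1$ must be at least $\lambda_2$. Therefore, for every $\lambda\in(0,\lambda_2)\setminus\{\lambda_1\}$ the non-eigenvalue hypothesis holds, and the abstract Fredholm alternative furnishes a solution $u\in W_0^{s,p}(\Omega)$ for the given $f\in W_0^{-s,p'}(\Omega)$.

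The main subtlety I anticipate is matching the hypotheses of \cite[Chapter II, Theorem 3.2]{Fucik} (originally phrased for nonlinear operators in reflexive Banach spaces with a $(p-1)$-homogeneous principal part and a compact lower order perturbation) to our fractional framework; this is essentially a translation exercise, since all the ingredients---reflexivity and uniform convexity of $W_0^{s,p}(\Omega)$, the $(\mathcal{S}_+)$ property of $A$, compactness and homogeneity of $H$, and the absence of eigenvalues in the relevant interval---are already established. As a cross-check and alternative route for the simpler case $\lambda\in(0,\lambda_1)$, one can minimize the coercive (by the variational characterization of $\lambda_1$) and weakly lower semicontinuous functional $I(u)=\tfrac{1}{p}\|u\|^p-\tfrac{\lambda}{p}\int_\Omega h|u|^p\diff x-\langle f,u\rangle$ directly, invoking Lemma~\ref{convergence} for the weak continuity of $u\mapsto\int_\Omega h|u|^p\diff x$.
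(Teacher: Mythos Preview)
Your proposal is correct and follows essentially the same approach as the paper: both invoke the abstract Fredholm alternative of Fu\v{c}\'{\i}k et al.\ \cite[Chapter II, Theorem 3.2]{Fucik}, using Theorem~\ref{2ndE} to guarantee that $(0,\lambda_2)\setminus\{\lambda_1\}$ contains no eigenvalues of \eqref{EP}. The paper states this in one line without spelling out the verification of the operator hypotheses, whereas you have made those checks explicit; the content is the same.
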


\subsection{Infinitely many small solutions}

Consider the problem
\begin{eqnarray}\label{eq.small_sol}
\begin{cases}
(-\Delta)_p^su=f(x,u) \quad &\text{in } \Omega,\\
u=0\quad &\text{in } \mathbb{R}^N\setminus \Omega,
\end{cases}
\end{eqnarray}
where $f$ satisfies $(\textup{F1})$. Furthermore we assume that
\begin{itemize}
	\item [(F4)] There exists a constant $t_0 > 0$ such that $pF(x,t)-f(x,t)t > 0$ for a.e. $x\in\Omega$ and for all $0<|t|<t_0,$ where $F(x,t)=\int_0^tf(x,\tau)\diff \tau.$
	\item [(F5)] $\lim_{t\to 0}\frac{f(x,t)}{|t|^{p-2}t}=\infty$ uniformly for a.e. $x\in\Omega.$
	\item [(F6)] $f$ is odd in $t$ for $t$ small.
\end{itemize}
By using the modified functional methods used in \cite{Wang} and using the a-priori bounds for solutions obtained in Section~\ref{regularity}, we obtain our second existence result as follows.
\begin{theorem}\label{Theo.small.sol}
	Assume that $\textup{(F1)}$ and $\textup{(F4)}-\textup{(F6)}$ hold. Then problem \eqref{eq.small_sol} has a sequence of solutions $\{u_n\}$ such that $|u_n|_\infty \to 0$ as $n\to\infty.$
	
\end{theorem}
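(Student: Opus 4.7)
The plan is to combine a truncation of $f$ with Kajikiya's symmetric mountain pass theorem (as in \cite{Wang}), and then invoke the a-priori estimate of Theorem~\ref{Theo.A-priori bounds} to transfer the conclusion back to the original problem.

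Since (F4)--(F6) describe $f$ only near $t=0$, I would first introduce an even cut-off $\eta\in C^1(\mathbb{R},[0,1])$ with $\eta\equiv 1$ on $[-\delta,\delta]$ and $\eta\equiv 0$ outside $(-2\delta,2\delta)$, where $\delta\in(0,t_0/2)$ will be chosen small, and set $\tilde{f}(x,t):=\eta(t)f(x,t)$, $\tilde{F}(x,t):=\int_0^t\tilde{f}(x,\tau)\diff\tau$, and $\tilde{\Phi}(u):=\tfrac{1}{p}\|u\|^p-\int_\Omega\tilde{F}(x,u)\diff x$. By (F6), $\tilde{f}$ is odd in $t$ so $\tilde{\Phi}$ is even; since $\tilde{f}$ still obeys the growth bound in (F1), Lemma~\ref{PS1} gives $\tilde{\Phi}\in C^1(W_0^{s,p}(\Omega),\mathbb{R})$, and any critical point $u$ of $\tilde{\Phi}$ with $|u|_\infty\leq\delta$ automatically solves \eqref{eq.small_sol}.

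Next I would verify the variational hypotheses of Kajikiya's theorem. For coercivity: writing $|\tilde{F}(x,t)|\leq C\sum_i h_i(x)\min\{|t|,2\delta\}^{q_i}$, using the pointwise inequality $\min\{|t|,2\delta\}^{q_i}\leq(2\delta)^{q_i-p}|t|^p$ when $q_i\geq p$ and $\min\{|t|,2\delta\}^{q_i}\leq|t|^{q_i}$ when $q_i<p$, together with Lemma~\ref{norms.est} applied with $q=p$ for the superlinear terms (noting $\widetilde{\W_{q_i}}\subset\W_p$ when $q_i\geq p$, by absorbing $q_i-a$ into $p-a$), I obtain $\int_\Omega|\tilde{F}(x,u)|\diff x\leq C_1\delta^{q_\ast-p}\|u\|^p+C_2\sum_{q_i<p}\|u\|^{q_i}$ with $q_\ast:=\min\{q_i:q_i\geq p\}$. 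Shrinking $\delta$ makes $\tilde{\Phi}(u)\geq\tfrac{1}{2p}\|u\|^p-C_2\sum_{q_i<p}\|u\|^{q_i}$ coercive, and (PS) follows from Lemma~\ref{PS1}(ii). For the negativity on high-genus sets: (F5) yields $\tilde{F}(x,t)\geq\tfrac{M}{p}|t|^p$ for $|t|$ small and $M$ arbitrarily large; on any $k$-dimensional subspace $V_k\subset W_0^{s,p}(\Omega)$ the norms $\|\cdot\|$ and $|\cdot|_p$ are equivalent, so $\tilde{\Phi}(u)<0$ on a small punctured ball $\{0<\|u\|\leq r_k\}\cap V_k$.

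Kajikiya's symmetric critical point theorem then produces a sequence $\{u_k\}$ of nontrivial critical points of $\tilde{\Phi}$ with $\tilde{\Phi}(u_k)\leq 0$ and $\|u_k\|\to 0$ in $W_0^{s,p}(\Omega)$. By the compact embedding $W_0^{s,p}(\Omega)\hookrightarrow L^{\tilde{q}}(\Omega)$, $|u_k|_{\tilde{q}}\to 0$. Each $u_k$ solves the truncated equation $(-\Delta)_p^s u=\tilde{f}(x,u)$ and $\tilde{f}$ satisfies (F1) with the same weights $h_i\in\widetilde{\W_{q_i}}$, so Theorem~\ref{Theo.A-priori bounds} gives $|u_k|_\infty\leq C\max\{|u_k|_{\tilde{q}}^{\gamma_1},|u_k|_{\tilde{q}}^{\gamma_2}\}\to 0$. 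For $k$ sufficiently large, $|u_k|_\infty<\delta$, hence $\tilde{f}(\cdot,u_k)=f(\cdot,u_k)$ and $u_k$ is a genuine solution of \eqref{eq.small_sol} with $|u_k|_\infty\to 0$. The main obstacle is establishing the coercivity estimate for $\tilde{\Phi}$: the singular weights $h_i$ combined with possibly superlinear exponents $q_i>p$ force one to interpolate the truncated nonlinearity at scale $\delta$ via Lemma~\ref{norms.est} and the inclusion $\widetilde{\W_{q_i}}\subset\W_p$ for $q_i\geq p$; condition (F4) enters within Kajikiya's framework to ensure that cluster points of $\{u_k\}$ must lie at the origin, so that $\|u_k\|\to 0$ and not merely $\tilde{\Phi}(u_k)\to 0$.
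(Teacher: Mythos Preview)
Your strategy is sound and parallels the paper's, but with two technical differences worth noting. First, your truncation $\tilde f=\eta f$ differs from the paper's choice $\tilde F(x,t)=\eta(t)F(x,t)+(1-\eta(t))\gamma|t|^p$ with $\gamma\in(0,\min\{1,(pC_{\mathrm{imb}}^p)^{-1}\})$; the paper's extra tail $\gamma|t|^p$ is what makes coercivity immediate, since one then bounds $\int_\Omega\tilde F(x,u)\diff x\le C\sum_i\|u\|^{a_i}+\gamma C_{\mathrm{imb}}^p\|u\|^p$ with all $a_i\le 1$, so no uncontrolled $\|u\|^p$ competitor appears. Your estimate $C_1\delta^{q_\ast-p}\|u\|^p$ breaks down when some $q_i=p$ (allowed under (F1)), because then $\delta^{q_\ast-p}=1$ and shrinking $\delta$ does nothing. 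This is easily repaired by instead using $\min\{|t|,2\delta\}^{q_i}\le(2\delta)^{q_i-a_i}|t|^{a_i}$ with the $a_i\in[0,1]$ coming from $\widetilde{\W_{q_i}}$ and then applying Hardy, exactly as the paper does.

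Second, the paper invokes the Heinz--Wang abstract result (Lemma~\ref{abstract.result}), which only delivers critical \emph{values} $c_n\to0^-$; to upgrade this to $u_n\to0$ in $W_0^{s,p}(\Omega)$ the paper relies essentially on (F4), via the identity $\int_\Omega[p\tilde F(x,\bar u)-\tilde f(x,\bar u)\bar u]\diff x=0$ forcing the cluster point $\bar u$ to vanish (here the specific form of $\tilde F$ with the $\gamma|t|^p$ tail is used again, through \eqref{tilde.F.est}--\eqref{tilde.f.est}). If you genuinely appeal to Kajikiya's symmetric mountain pass theorem, it already yields critical \emph{points} $u_k\to0$, so (F4) becomes superfluous and your closing remark that ``(F4) enters within Kajikiya's framework'' is misplaced. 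Either route works: the paper's path needs (F4) together with the tailored truncation; your path via Kajikiya is slightly cleaner but you should then drop (F4) from the argument, or else switch to Lemma~\ref{abstract.result} and reproduce the paper's (F4)-based step explicitly.
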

As discussed in Section~\ref{Sec.Pre}, a weak solution to problem \eqref{eq.small_sol} is a critial point of the energy functional $\Phi:W_0^{s,p}(\Omega)\to{\mathbb R}$ defined as
$$\Phi(u):=\frac{1}{p}\|u\|^p-\int_{\Omega}F(x,u)\diff x.$$
To obtain a sequence of small solutions to problem \eqref{eq.small_sol}, we apply the following result.
\begin{lemma}\label{abstract.result}{\rm (\cite{H,Wang})}
	Let $X$ be a Banach space and let $J\in C^{1}(X,{\Bbb R})$. Assume $J$ satisfies the $(\textup{PS})$ condition, is even and
	bounded from below, and $J(0)=0$. If for any $n\in{\Bbb N}$, there
	exists an $n$-dimensional subspace $X_{n}$ and $\rho_{n}>0$ such
	that
	$$
	\sup_{X_{n}\cap S_{\rho_{n}}}{J}<0,
	$$
	where $S_{\rho}:=\left\{ u \in X : \|u\|_{X}=\rho\right\}$, then
	$J$ has a sequence of critical values $c_{n}<0$ satisfying $c_{n}\to
	0$ as $n \to \infty$.
\end{lemma}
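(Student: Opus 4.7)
The plan is to apply Lemma~\ref{abstract.result} to a truncation of $\Phi$ and then use the a-priori bound of Theorem~\ref{Theo.A-priori bounds} to transfer critical points of the truncated functional back to genuine solutions of~\eqref{eq.small_sol}. Concretely, fix $t_1\in(0,t_0)$ so that $(\textup{F4})$ and $(\textup{F6})$ hold on $[-t_1,t_1]$, and pick an odd smooth cutoff $\eta:\mathbb{R}\to[0,1]$ with $\eta\equiv 1$ on $[-t_1/2,t_1/2]$ and $\eta\equiv 0$ outside $[-t_1,t_1]$. Set
\[
\widetilde{f}(x,t):=\eta(t)f(x,t),\quad \widetilde{F}(x,t):=\int_0^t\widetilde{f}(x,\tau)\diff\tau,\quad \widetilde{\Phi}(u):=\frac{1}{p}\|u\|^p-\int_\Omega \widetilde{F}(x,u)\diff x.
\]
By construction $\widetilde{f}$ still satisfies $(\textup{F1})$ (with the same $h_i$ and $q_i$), is odd in $t$ on all of $\mathbb{R}$, coincides with $f$ for $|t|\le t_1/2$, and $\widetilde{F}(x,t)$ is bounded above by $C\sum_i h_i(x)$ uniformly in $t$; in particular, by Lemma~\ref{norms.est}, $\int_\Omega\widetilde{F}(x,u)\diff x$ is a bounded perturbation of $\frac{1}{p}\|u\|^p$, so $\widetilde{\Phi}$ is coercive, bounded from below, even, of class $C^1$, and $\widetilde{\Phi}(0)=0$. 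The $(\textup{PS})$ condition follows from coercivity and Lemma~\ref{PS1}(ii).

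Next I would verify the geometric hypothesis of Lemma~\ref{abstract.result}. Fix any finite-dimensional subspace $X_n\subset W_0^{s,p}(\Omega)$ of dimension $n$; on $X_n$ all norms are equivalent, so there exists $c_n>0$ with $|u|_{p,h_\ast}\geq c_n\|u\|$ for every $u\in X_n$, where $h_\ast$ is a nonnegative weight supported on a set of positive measure on which $(\textup{F5})$ gives unbounded lower bounds for $f(x,t)/|t|^{p-2}t$. By $(\textup{F5})$, for any $M>0$ there is $\delta_M>0$ with $\widetilde{F}(x,t)\ge \frac{M}{p}|t|^p$ whenever $|t|\le \delta_M$, uniformly in $x$ on a set of positive measure. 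Choosing $\rho_n$ so small that every $u\in X_n\cap S_{\rho_n}$ satisfies $|u|_\infty\le \delta_M$ (possible because $X_n$ is finite-dimensional, hence embedded in $L^\infty(\Omega)$) and taking $M$ sufficiently large depending on $c_n$, a direct computation gives
\[
\widetilde{\Phi}(u)\le \frac{1}{p}\rho_n^p-\frac{M c_n^p}{p}\rho_n^p<0\quad\text{for all } u\in X_n\cap S_{\rho_n}.
\]
Lemma~\ref{abstract.result} then furnishes critical points $u_n$ of $\widetilde{\Phi}$ with $\widetilde{\Phi}(u_n)=c_n^\ast<0$ and $c_n^\ast\to 0^-$.

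Finally I would upgrade this sequence to the claimed $L^\infty$-smallness. Combining $\widetilde{\Phi}(u_n)\to 0$ with $\langle\widetilde{\Phi}'(u_n),u_n\rangle=0$, the truncated version of $(\textup{F4})$ (still valid for $\widetilde{f}$ after shrinking $t_1$) yields $\|u_n\|\to 0$, whence by the compact embedding $W_0^{s,p}(\Omega)\hookrightarrow\hookrightarrow L^{\widetilde q}(\Omega)$ we get $|u_n|_{\widetilde q}\to 0$. Since each $u_n$ solves \eqref{eq.small_sol} with $f$ replaced by $\widetilde f$, and $\widetilde f$ still satisfies the structural condition $(\textup{F1})$, Theorem~\ref{Theo.A-priori bounds} applies and gives
\[
|u_n|_\infty\le C\max\bigl\{|u_n|_{\widetilde q}^{\gamma_1},|u_n|_{\widetilde q}^{\gamma_2}\bigr\}\to 0.
\]
For $n$ large enough, $|u_n|_\infty\le t_1/2$, so $\widetilde f(x,u_n)=f(x,u_n)$ a.e.\ in $\Omega$, and $u_n$ is a solution of the original problem~\eqref{eq.small_sol}.

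The main obstacle I expect is the bootstrap step: producing the $L^\infty$-smallness of $u_n$ from the variational data. The argument hinges on the a-priori bound of Theorem~\ref{Theo.A-priori bounds} being scale-sensitive enough to convert $|u_n|_{\widetilde q}\to 0$ into $|u_n|_\infty\to 0$ (i.e., the exponents $\gamma_1,\gamma_2$ are both positive), and on checking that the truncation $\widetilde f$ inherits the Carath\'eodory growth bounded by the same $h_i\in\widetilde{\W_{q_i}}$. Provided these points go through, the proof reduces cleanly to the abstract symmetric critical-point result cited as Lemma~\ref{abstract.result}.
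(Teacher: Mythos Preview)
Note first that Lemma~\ref{abstract.result} is quoted from \cite{H,Wang} and is not proved in the paper; what you have actually written is a proof of Theorem~\ref{Theo.small.sol}, which \emph{applies} the lemma. I compare your argument to the paper's proof of that theorem.

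Your overall plan---truncate $f$, apply Lemma~\ref{abstract.result} to the modified functional, then use Theorem~\ref{Theo.A-priori bounds} to pass from $L^{\widetilde q}$-smallness to $L^\infty$-smallness and recover solutions of the original problem---is exactly the paper's. Two points, however, need repair. The minor one: an \emph{odd} cutoff $\eta$ with $\eta\equiv 1$ near $0$ does not exist; you want $\eta$ even, so that $\eta f$ inherits oddness from $f$.

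The substantive gap is the sentence ``the truncated version of $(\textup{F4})$ \dots\ yields $\|u_n\|\to 0$''. Combining $\widetilde{\Phi}(u_n)\to 0$ with $\langle\widetilde{\Phi}'(u_n),u_n\rangle=0$ only gives
\[
\int_\Omega\bigl[p\widetilde{F}(x,u_n)-\widetilde{f}(x,u_n)u_n\bigr]\diff x\to 0,
\]
and nonnegativity of the integrand alone does not force $\|u_n\|\to 0$. The paper handles this by (i) using the richer truncation $\widetilde{F}(x,t)=\eta(t)F(x,t)+(1-\eta(t))\gamma|t|^p$ with $0<\gamma<\min\{1,(pC_{imb}^p)^{-1}\}$, for which one verifies $p\widetilde{F}-\widetilde{f}t\geq 0$ with equality \emph{precisely} when $t=0$ or $|t|\geq 2t_2$; (ii) extracting via the $(\textup{PS})$ condition a subsequence $u_n\to\bar{u}$ in $W_0^{s,p}(\Omega)$, so that $\widetilde{\Phi}(\bar{u})=0$ and the integral above vanishes at $\bar{u}$; and (iii) deducing $\bar{u}(x)\in\{0\}\cup\{|t|\geq 2t_2\}$ a.e., whence $\widetilde{F}(x,\bar{u})=\gamma|\bar{u}|^p$ a.e.\ and $0=\widetilde{\Phi}(\bar{u})\geq(\tfrac{1}{p}-\gamma C_{imb}^p)\|\bar{u}\|^p$, forcing $\bar{u}=0$. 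With your bare truncation $\widetilde{f}=\eta f$ you have neither carried out the subsequence step nor checked that $p\widetilde{F}-\widetilde{f}t\geq 0$ survives in the transition zone $t_1/2<|t|<t_1$. Once $u_n\to 0$ in $W_0^{s,p}(\Omega)$ is properly established, your concluding use of Theorem~\ref{Theo.A-priori bounds} is correct and coincides with the paper's.
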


\begin{proof}[Proof of Theorem~\ref{Theo.small.sol}]
	In order to apply Lemma~\ref{abstract.result}, we modify $f$ to $\widetilde{f}$ to get the modified energy functional satisfying the conditions in Lemma~\ref{abstract.result} as follows. We first note that by $(\textup{F5})$ and $(\textup{F6})$, there exists $t_1\in (0,t_0)$ such that 
	\begin{equation}\label{property.f}
	\text{for}\ |t|<t_1,\  f\ \text{is odd in}\  t \ \text{and}\  F(x,t)\geq |t|^p.
	\end{equation}
	Next, let $t_2\in (0,\frac{t_1}{2})$ and define a cut-off function $\eta \in C^{1}({\Bbb R}, {\Bbb
		R})$ such that $\eta$ is even, $0\leq \eta\leq 1,$ $\eta(t)=1$ for $|t|\le t_{2}$, $\eta(t)=0$ for
	$|t| \ge 2t_{2}$, and
	$\eta^{\prime}(t)t\le 0.$ Then, define  
	\begin{equation*}
	{\widetilde F}(x,t) :=\eta(t)F(x,t)+(1-\eta(t))\gamma |t|^{p} \quad
	\text{and} \quad {\widetilde f}(x,t ) :=\frac{\partial}{\partial t}{\widetilde
		F}(x,t),
	\end{equation*}
	for some $\gamma\in (0,\min\{1,\frac{1}{pC_{imb}^p}\}),$ where $C_{imb}$ is the imbedding constant for $W_0^{s,p}(\Omega)\hookrightarrow L^p(\Omega),$ i.e., $|u|_p\leq C_{imb}\|u\|$ for all $u\in W_0^{s,p}(\Omega).$ The formula of ${\widetilde f}$ is explicitly given by
	\begin{equation}\label{tilde.f}
	{\widetilde f}(x,t)=\eta'(t)F(x,t)+\eta(t) f(x,t)-\gamma \eta'(t)|t|^p+(1-\eta(t))p\gamma|t|^{p-2}t.
	\end{equation}
	By $(\textup{F1})$, we have
	\begin{equation*}
	F(x,t)\leq \sum_{i=1}^{m}\frac{1}{q_i}h_i(x)|t|^{q_i}\ \ \text{for a.e.}\ x\in\Omega\ \text{and all}\ t\in\mathbb{R}. 
	\end{equation*}
	This yields
	\begin{equation}\label{tilde.F.bound}
	\widetilde {F}(x,t)\leq \sum_{i=1}^{m}\frac{1}{q_i}h_i(x)\eta(t)|t|^{q_i}+(1-\eta(t))\gamma |t|^{p}\ \ \text{for a.e.}\ x\in\Omega\ \text{and all}\ t\in\mathbb{R}; 
	\end{equation}
	and 
	\begin{equation}\label{tilde.f.bound}
	|\widetilde {f}(x,t)|\leq C_1\left[\sum_{i=1}^{m}h_i(x)|t|^{q_i-1}+|t|^{p-1}\right]\ \ \text{for a.e.}\ x\in\Omega\ \text{and all}\ t\in\mathbb{R}, 
	\end{equation}
	where $C_1:=\frac{2t_2|\eta'|_{\infty}}{\underset{1\leq i\leq m}{\min}q_i}+1+2t_2\gamma |\eta'|_{\infty}+p\gamma.$ Also, by $(\textup{F4})$, the definition of $\eta$, and \eqref{property.f} we easily deduce that
	$\widetilde {f}$ is odd in $t$ for all $t\in\mathbb{R}^N$ and the following statements hold:
	\begin{equation}\label{tilde.F.est}
	p\widetilde{F}(x,t)-\widetilde{f}(x,t)t\geq 0\quad \text{for a.e.}\ x\in\Omega\ \text{and for all}\ t\in\mathbb{R};
	\end{equation}
	\begin{equation}\label{tilde.f.est}
	p\widetilde{F}(x,t)-\widetilde{f}(x,t)t=0\quad \text{iff}\ \ t=0\ \text{or}\ |t|\geq 2t_2.
	\end{equation}
	Define\ ${\widetilde \Phi}:\ W_0^{s,p}(\Omega) \to{\Bbb R}$ as 
	$${\widetilde \Phi}(u):=\frac{1}{p}\|u\|^p-\int_{\Omega}{{\widetilde F}(x,u)}\,dx,\ u\in W_0^{s,p}(\Omega).$$
	Clearly, ${\widetilde \Phi}(0)=0$ and ${\widetilde \Phi}$ is even. By \eqref{tilde.f.bound}, ${\widetilde \Phi}\in C^{1}(W_0^{s,p}(\Omega),{\Bbb R})$ in view of Lemma~\ref{PS1}. Moreover, ${\widetilde \Phi}$ is coercive on $W_0^{s,p}(\Omega)$. Indeed, from \eqref{tilde.F.bound} and invoking the H\"older and Hardy inequalities, we have
	\begin{align*}
	\int_\Omega\widetilde {F}(x,u)\diff x&\leq \sum_{i=1}^{n}\frac{1}{q_i}\int_\Omega h_i(x)\eta(u)|u|^{q_i}\diff x+\int_\Omega(1-\eta(u))\gamma |u|^{p}\diff x\\
	&\leq\sum_{i=1}^{m}\frac{(2t_2)^{q_i-a_i}}{q_i}\int_\Omega \left|h_i\rho^{sa_i}\right|\left|\frac{u}{\rho^s}\right|^{a_i}\diff x+\gamma\int_\Omega |u|^{p}\diff x\\
	&\leq\sum_{i=1}^{m}\frac{(2t_2)^{q_i-a_i}}{q_i}|\Omega|^{\frac{q_i-a_i}{b_i}}\left|h_i\rho^{sa_i}\right|_{r_i}\left|\frac{u}{\rho^s}\right|_p^{a_i}+\gamma|u|_p^{p}\\
	&\leq C_2\sum_{i=1}^{m}\|u\|^{a_i}+\gamma C_{imb}^p\|u\|^{p},\ \forall u\in W_0^{s,p}(\Omega),
	\end{align*}
	where $\frac{1}{r_i}+\frac{a_i}{p}+\frac{q_i-a_i}{b_i}=1$ and $C_2>0$ is independent of $u$. Hence,
	$${\widetilde \Phi}(u)\geq \left(\frac{1}{p}-\gamma C_{imb}^p\right)\|u\|^p-C_2\sum_{i=1}^{n}\|u\|^{a_i}.$$
	The coerciveness of ${\widetilde \Phi}$ therefore follows since $0\leq a_i\leq 1<p$ for all $i\in\{1,\cdots,m\}$ and the fact that $\frac{1}{p}-\gamma C_{imb}^p>0.$ Using the coerciveness of ${\widetilde \Phi}$, invoking the Lemma~\ref{PS1}, we easily deduce that ${\widetilde \Phi}$ satisfies the $(\textup{PS})$ condition. Also, the boundedness from below  on $W_0^{s,p}(\Omega)$ of ${\widetilde \Phi}$ follows its coerciveness and continuity. We now verify the remaining condition in Lemma~\ref{abstract.result}. Fix $n\in{\Bbb N}$. Let $\phi_1,\cdots,\phi_n$ be linearly independent functions in $C_c^\infty(\Omega)$ and define $X_{n}:=\operatorname{span}\{\phi_1,\cdots,\phi_n\}.$ Note that $X_n$ is a finitely dimensional normed space so all norms on $X_n$ are equivalent. Thus there exist $\kappa_1,\kappa_2>0$ such that
	$$\kappa_1|u|_\infty\leq \|u\|\leq \kappa_2|u|_p,\ \forall u\in X_n.$$
	From this, $(\textup{F5})$ and $(\textup{F6})$, we find a $\rho_n>0$ small enough that
	$$\sup_{X_{n}\cap S_{\rho_{n}}}{\widetilde \Phi}<0.$$
	Applying Lemma~\ref{abstract.result}, we find a sequence $\{u_n\}\subset W_0^{s,p}(\Omega)$ such that ${\widetilde \Phi}'(u_n)=0 $ for all $n\in\mathbb{N}$ and ${\widetilde \Phi}(u_n)\to 0$ as $n\to\infty.$ Since ${\widetilde \Phi}$ satisfies the $(\textup{PS})$ condition, up to subsequence we have $u_n\to \bar{u}$ in  $W_0^{s,p}(\Omega)$. Hence, ${\widetilde \Phi}(\bar{u})=0$ and $\langle {\widetilde \Phi}'(\bar{u}),\bar{u}\rangle=0$ due to the fact that ${\widetilde \Phi}\in C^{1}(W_0^{s,p}(\Omega),{\Bbb R}).$ This yields
	$$\int_\Omega \left[p\widetilde{F}(x,\bar{u})-\widetilde{f}(x,\bar{u})\bar{u}\right]\diff x=0.$$
	From this, \eqref{tilde.F.est} and \eqref{tilde.f.est}, we deduce that $\bar{u}(x)=0$ or $|\bar{u}(x)|\geq 2t_2$ for a.e. $x\in\Omega.$ Thus,
	$$\widetilde{F}(x,\bar{u})=0\ \ \text{or}\ \ \gamma |\bar{u}|^p\ \ \text{for a.e.}\ x\in\Omega.$$
	Hence,
	\begin{align*}
	0={\widetilde \Phi}(\bar{u})&\geq \frac{1}{p}\|\bar{u}\|^p-\gamma\int_\Omega |\bar{u}|^p\diff x\\
	&\geq \frac{1}{p}\|\bar{u}\|^p-\gamma C_{imb}^p\|\bar{u}\|^p=\left(\frac{1}{p}-\gamma C_{imb}^p\right)\|\bar{u}\|^p.
	\end{align*}
	This implies that $\bar{u}=0.$  That is, $u_n\to 0$ in $W_0^{s,p}(\Omega)$, and hence, $u_n\to 0$ in $L^{\widetilde{q}}(\Omega),$ where $\widetilde{q}$ defined as in Theorem~\ref{Theo.A-priori bounds}. Hence, Theorem~\ref{Theo.A-priori bounds} infers that $|u_n|_\infty\to 0$. Let $n_1\in\mathbb{N}$ be such that $|u_n|_\infty<t_1$ for all $n\geq n_1.$ We the have $\widetilde{f}(x,u_n)=f(x,u_n)$ for all $n\geq n_1$ and hence, $\{u_n\}_{n=n_1}^\infty$ is a sequence of solutions to problem~\eqref{eq.small_sol} and $|u_n|_\infty\to 0$. The proof is complete.
\end{proof}

\appendix
\section*{Appendix. Proof of Example~\ref{Wq.not.Lorentz}}\label{AppendixA}
We first recall the definition of the Lorentz space $L^{p_0,q_0}(\Omega)$ with $1<p_0,q_0<\infty:$ 
$$L^{p_0,q_0}(\Omega):=\left\{f:\Omega\to\mathbb{R}\ \text{measurable}\ , \int_{0}^{\infty}\left[t^{\frac{1}{p_0}}f^\ast(t)\right]^{q_0}\frac{\diff t}{t}<\infty\right\},$$
where $f^\ast(t):=\inf\{s>0:\ \alpha_f(s)\leq t\}$ with $\alpha_f(s):=|\{x\in\Omega:\ |f(x)|>s\}|.$ 

Let $N=3,\ p=2,$ $s=1$, and $\Omega=B(0,1)$. We consider weights of the form $h(x)=(1-|x|)^{-\beta}$ ($\beta>0$) and determine the range of $\beta$ such that $h\in L^{\frac{N}{p},q_0}(\Omega)=L^{\frac{3}{2},q_0}(\Omega)$ for $q_0\in (1,\infty)$ given. We have
$$\alpha_h(s)=\begin{cases}
\frac{4\pi}{3},\quad 0<s\leq 1,\\
\frac{4\pi}{3}-\frac{4\pi}{3}(1-s^{-\frac{1}{\beta}})^3,\ \ s>1.
\end{cases}$$
An easy computation yields
$$h^\ast(t)=\begin{cases}
\left[1-(1-\frac{3t}{4\pi})^{1/3}\right]^{-\beta},\ \ 0<t<\frac{4\pi}{3},\\
0,\quad t\geq \frac{4\pi}{3}
\end{cases}$$
and hence, 
$$\int_{0}^{\infty}\left[t^{\frac{1}{p_0}}h^\ast(t)\right]^{q_0}\frac{\diff t}{t}=\int_{0}^{\frac{4\pi}{3}}\left[1-(1-\frac{3t}{4\pi})^{1/3}\right]^{-\beta}t^{\frac{2}{3}q_0-1}\diff t.$$
It is easy to see this improper integral is convergent if and only if $$\int_0^1(1-\tau)^{(-\beta+\frac{2}{3}) q_0-1}\diff\tau<\infty,\  \text{i.e.},\ (-\beta+\frac{2}{3}) q_0>0.$$
Thus we have 
\begin{equation}\tag{A.1}\label{a1}
\forall q_0\in (1,\infty),\ h\in L^{\frac{3}{2},q_0}(\Omega)\ \ \text{iff}\ \ \beta<\frac{2}{3}.
\end{equation}
On the other hand, $h\in \widetilde{\W_2}$ \ if \ $h\rho^{a}\in L^r(\Omega)$ for some $a\in [0,1]$ and $r\in (1,\infty)$ satisfying
$$\frac{1}{r}+\frac{a}{2}+\frac{2-a}{2^\ast}<1,\ \text{i.e.,}\ 3r^{-1}+a<2.$$ 
That is, $h\in \widetilde{\W_2}$ \ if there exist $a\in [0,1]$ and $r\in (1,\infty)$ such that
\begin{equation}\tag{A.2}\label{a2}
\begin{cases}
\beta<a+r^{-1},\\
3r^{-1}+a<2.
\end{cases}
\end{equation}
Hence, by choosing $\beta=\frac{2}{3},$ we have $h\in \widetilde{\W_2}$ since \eqref{a2} fulfills with $a=\frac{2}{3}$ and $r=3$ but $h\not\in L^{\frac{3}{2},q_0}(\Omega)$ due to \eqref{a1}. Obviously, $h\not\in L^{\frac{3}{2}}(\Omega)$ as well.

\vspace{0.5cm}
{\bf Acknowledgment}
The second author was supported by the
2017-0152 Research Fund of the University of Ulsan.

	\medskip
	

	\bigskip

\end{document}